\documentclass[10pt,letterpaper,twoside]{article}
\usepackage{arxiv-preprint}

\RequirePackage{amsmath,amssymb,amsthm,mathtools,bm}
\RequirePackage{enumitem}
\usepackage[final]{showkeys}
\RequirePackage{url}
\RequirePackage{xcolor}
\definecolor{linkgreen}{RGB}{0,105,70}
\RequirePackage{hyperref}
\hypersetup{
	colorlinks=true,
	linkcolor=blue,
	citecolor=blue,
	urlcolor=blue,
	pdftitle={Berry--Esseen bounds for stochastic approximation with Markovian noise},
	pdfauthor={Zhijun Cai, Yujie Qin, Zhuosong Zhang},
	pdfsubject={Mathematics preprint},
	pdfkeywords={Berry--Esseen bound, stochastic approximation, Polyak--Ruppert averaging, Markov chain, Stein's method}
}
\RequirePackage[capitalise,nameinlink,noabbrev]{cleveref}

\startlocaldefs
\theoremstyle{plain}
\newtheorem{theorem}{Theorem}[section]
\newtheorem{lemma}[theorem]{Lemma}
\newtheorem{corollary}[theorem]{Corollary}

\theoremstyle{definition}

\newtheorem{assumption}{Assumption}[section]

\newtheorem{remark}{Remark}[section]
\endlocaldefs

\numberwithin{equation}{section}
\setlist[enumerate]{label=(\roman*),leftmargin=2.2em,itemsep=1mm,topsep=1mm}

\newcommand{\E}{\mathbb E}
\newcommand{\Pp}{\mathbb P}

\newcommand{\one}{\mathbf 1}

\newcommand{\cov}{\operatorname{Cov}}

\newcommand{\norm}[1]{\lVert #1\rVert}
\newcommand{\abs}[1]{\lvert #1\rvert}

\newcommand{\Pnu}{\Pp_{\nu}}

\crefname{section}{Section}{Sections}
\Crefname{section}{Section}{Sections}
\crefname{appendix}{Appendix}{Appendices}
\Crefname{appendix}{Appendix}{Appendices}
\crefname{subsection}{Section}{Sections}
\Crefname{subsection}{Section}{Sections}
\crefname{subsubsection}{Section}{Sections}
\Crefname{subsubsection}{Section}{Sections}
\crefname{equation}{equation}{equations}
\Crefname{equation}{Equation}{Equations}
\crefname{theorem}{Theorem}{Theorems}
\Crefname{theorem}{Theorem}{Theorems}
\crefname{lemma}{Lemma}{Lemmas}
\Crefname{lemma}{Lemma}{Lemmas}
\crefname{proposition}{Proposition}{Propositions}
\Crefname{proposition}{Proposition}{Propositions}
\crefname{corollary}{Corollary}{Corollaries}
\Crefname{corollary}{Corollary}{Corollaries}
\crefname{assumption}{Assumption}{Assumptions}
\Crefname{assumption}{Assumption}{Assumptions}
\crefname{example}{Example}{Examples}
\Crefname{example}{Example}{Examples}
\crefname{remark}{Remark}{Remarks}
\Crefname{remark}{Remark}{Remarks}
\crefname{enumi}{item}{items}
\Crefname{enumi}{Item}{Items}

\crefname{equation}{}{}
\Crefname{equation}{}{}

\crefformat{equation}{#2\textup{(#1)}#3}
\Crefformat{equation}{#2\textup{(#1)}#3}

\crefrangeformat{equation}
{#3\textup{(#1)}#4--#5\textup{(#2)}#6}
\Crefrangeformat{equation}
{#3\textup{(#1)}#4--#5\textup{(#2)}#6}

\begin{document}
	
	\begin{frontmatter}
		\title{Berry--Esseen bounds for stochastic approximation with Markovian noise}
		\runtitle{Berry--Esseen bounds for Markovian stochastic approximation}
		
		\begin{aug}
			\author[A,B]{\fnms{Zhi-Jun}~\snm{Cai}\ead[label=e1]{zhijuncai@cuhk.edu.hk}}
			\author[A]{\fnms{Yu-Jie}~\snm{Qin}\ead[label=e2]{12631199@mail.sustech.edu.cn}}
			\author[A,C]{\fnms{Zhuo-Song}~\snm{Zhang}\ead[label=e3]{zhangzs3@sustech.edu.cn}}
			\address[A]{Department of Statistics and Data Science, Southern University of Science and Technology, Shenzhen 518055, China\printead[presep={,\ }]{e2}}
			\address[B]{Department of Statistics and Data Science, The Chinese University of Hong Kong, Shatin, New Territories, Hong Kong, China\printead[presep={,\ }]{e1}}
			\address[C]{National Center for Applied Mathematics Shenzhen, Shenzhen 518055, China\printead[presep={,\ }]{e3}}
		\end{aug}
		
		\begin{abstract}
			This paper studies the convergence rate of the Polyak-Ruppert average estimator in one-dimensional linear stochastic approximation recursion driven by Markov chains, allowing the Markov chain to start from any initial distribution. Under the conditions of positive Harris recurrence, non-periodicity, and one-step domination, combined with the fourth-moment condition of the coefficient function with respect to the stationary distribution, this paper uses the exchangeable pair technique to prove that after deterministic bias correction for the average estimator, a Berry--Esseen bound of order $n^{-\alpha/2}$ can be obtained, where the step size exponent is $\alpha \in (1/2, 1)$. Moreover, for the statistics without bias correction and after variance normalization, the asymptotic normal approximation error is asymptotically equivalent to a normal constant multiplied by $n^{1/2 - \alpha}$.
		\end{abstract}
		
		\begin{keyword}[class=MSC]
			\kwdgroup[type=primary]{\kwd{60F05}\kwd{60J10}}
			\kwdgroup[type=secondary]{\kwd{62L20}}
		\end{keyword}
		
		\begin{keyword}
			\kwd{Berry--Esseen bound}
			\kwd{stochastic approximation}
			\kwd{Polyak--Ruppert averaging}
			\kwd{Markov chain}
			\kwd{Stein's method}
		\end{keyword}
	\end{frontmatter}
	
	\section{Introduction}\label{sec:introduction}
	Stochastic approximation is a class of recursive algorithms that gradually update parameters based on noisy observations. When the target function cannot be directly calculated and can only be understood through observations with random errors, the algorithm uses continuously arriving data to gradually search for the zero points or maximum points of the function. The root-finding method for random equations proposed by Robbins and Monro is an early representative in this field~\citep{robbins1951stochastic}. Since then, stochastic approximation has been widely used in stochastic optimization, onlkine estimation, and adaptive control problems~\citep{kushner2003stochastic, borkar2008stochastic}. 
	
	From the perspective of the algorithm form, stochastic approximation updates parameters at each step based on the current parameters and the newly obtained observations. For recursive algorithms aimed at solving equations, the common update form is
	\[
	\theta_{k+1}
	=
	\theta_k-\epsilon_{k+1}F_{k+1}(\theta_k),
	\]
	where $F_{k+1}(\theta_k)$ represents the random update term obtained from the $(k+1)$th observation, and $\epsilon_{k+1}$ represents the step size. Since the update term contains random errors, the algorithm gradually reduces the impact of these errors on parameter estimation through multiple iterations. If these updates are examined from the perspective of long-term average behavior, the target corresponding to the recursion can usually be expressed as
	\[
	\int F(\theta,x)\,\pi(\mathrm{d}x)=0,
	\]
	here $\pi$ represents the reference distribution of the observed variable. This equation describes the average behavior of the random update terms and is a commonly used tool for analyzing recursive stability, convergence, and asymptotic distribution~\citep{kushner2003stochastic, borkar2008stochastic}.
	To further reduce the impact of recursive errors on the final estimator, the Polyak--Ruppert average can be applied to multiple iterative results, that is
	\begin{align}\label{eq-in-02}
		\bar\theta_n=\frac1n\sum_{k=1}^{n}\theta_k.
	\end{align}
	This method forms the final estimator by utilizing multiple iterative results. Under appropriate conditions, it can improve its asymptotic properties and achieve asymptotic efficiency~\citep{ruppert1988efficient,polyak1990new,polyak1992acceleration}. Existing studies have further provided finite sample errors, concentration inequalities, and high-probability bounds for linear stochastic approximation and its application in temporal difference learning~\citep{bhandari2021finite,mou2020linear,patil2023finite,samsonov2024improved,durmus2025finite}. These results mainly control the degree to which the estimator deviates from the target parameter. To study the distribution of the average estimator and conduct statistical inference, it is necessary to quantitatively analyze its normal approximation error.
	
	In the case of independent observations, the error of stochastic approximation mainly comes from the fluctuations of single-step random updates. By imposing appropriate moment conditions on the update terms and choosing gradually decreasing step sizes, the convergence and asymptotic distribution of the iterative sequence can be analyzed. 
	
	When observations are generated sequentially according to a Markov chain, observations at different times usually have correlations, and the current update is also influenced by previous states. Therefore, error analysis cannot only focus on single-step noise, but also needs to handle the time dependence between observations~\citep{fort2015central,bolthausen1980berry}. Existing studies mainly examine the finite sample errors, concentration inequalities, and high-probability bounds of such algorithms~\citep{bhandari2021finite,mou2020linear,patil2023finite,samsonov2024improved,durmus2025finite}. The policy evaluation in reinforcement learning can illustrate the specific sources of this Markov dependence. The agent interacts with the environment according to a fixed policy, observes the state at each time step, receives immediate rewards, and transitions to the next state. States and rewards are generated along the same trajectory, so adjacent observations are usually not independent. Temporal difference learning updates parameters using the value estimates of the current state and the next state. Under the linear value function approximation, this update can be expressed as a Markov chain-driven linear stochastic approximation~\citep{sutton1988learning,tsitsiklis1997analysis,bhandari2021finite}
	
	To illustrate the problem under study, let $S$ be the state space, $F(\theta,x)$ be the drift function depending on the observed state, and $\pi$ be the invariant distribution of the observation process. The target parameter $\theta^*$ satisfies the average equation
	\[
	\int_S F(\theta^*,x)\,\pi(\mathrm{d}x)=0.
	\]
	This paper considers the one-dimensional linear case $F(\theta,x) = a(x)\theta - b(x)$, for $k\ge 1$,
	\begin{equation}
		\theta_k=\theta_{k-1}
		-\epsilon_k\bigl\{a(X_k)\theta_{k-1}-b(X_k)\bigr\}, \quad \epsilon_k = \epsilon_0k^{-\alpha} \quad \alpha \in (1/2, 1).
		\label{eq-in-01}
	\end{equation}
	Let $\bar a = \int_S a(x)\,\pi(\mathrm{d}x)$ and $\bar b = \int_S b(x)\,\pi(\mathrm{d}x)$. When $\bar a \ne 0$, the unique solution to the average equation is $\theta^* = \bar b / \bar a$. This paper focuses on the finite sample normal approximation error of $\sqrt{n}(\bar\theta_n - \theta^*)$ when the observations are generated by a Markov chain with an invariant distribution of $\pi$ and the chain allows starting from any initial distribution.
	
	Under the conditions of independent observations and bounded coefficients, \citet{butyrin2025improved} utilized the finite sample covariance of the dominant linear term to obtain a degree of $O(n^{-\alpha/2}) $. Under the conditions of bounded coefficients and uniform geometric ergodicity, \citet{samsonov2025inference} established a non-asymptotic Berry--Esseen bound for the one-dimensional projection of the Polyak-Ruppert average. The normal approximation error is $O(n^{-1/4})$, which paper also studies the finite sample properties of the multiplier-block bootstrap confidence intervals under Markov sampling, and points out that the error order of the corresponding asymptotic variance estimation is $O(n^{-1/8})$, allowing for a logarithmic factor. For the additive functionals of Markov chains, \citet{srikant2025rates} obtained it under the Wasserstein distance $O\! \big(n^{-1/6}\sqrt{\log n}\big)$. The rate of normal approximation. For temporal difference learning, \citet{wu2025uncertainty} established a quantitative normal approximation result under the convex set distance, when $\alpha = 3/4$, the resulting error order is $O\! \big(n^{-1/4}\log n\big)$.
	
	To improve the convergence rate and achieve more accurate distribution approximation, Our proof uses the exchangeable-pair approach to Stein's method introduced by \citet{stein1986approximate}; see also \citet{shao2019berry} and \citet{zhang2022berry} for normal approximation with unbounded exchangeable pairs. We replace a local block of the Markov path by an independent copy from its conditional bridge law. Poisson decompositions control the recursive
	remainder and identify its deterministic bias.

	Let $\Phi$ denote the standard normal distribution function, and $\widetilde\sigma_n$ represent the exact finite-sample theoretical standard deviation of the dominant weighting term. $\beta_n$ is the first-order deterministic bias correction term determined by the overall quantity of the model. Definition
	\[
	T_n= \frac{\sqrt n\,(\bar\theta_n-\theta^*-\beta_n)}
	{\widetilde\sigma_n},
	\]
	when the observation process is positive Harris recurrent and non-periodic, and satisfies that the coefficient functions $a$ and $b$ have fourth moments with respect to the invariant distribution $\pi$, $\bar a > 0$ and $\alpha \in (1/2, 1)$, the statistic $T_n$ after first-order deterministic bias correction and standardization by $\widetilde\sigma_n$ satisfies
	\begin{equation*}
		\sup_{z\in\mathbb R}
		\bigl|
		\mathbb P(T_n\le z)-\Phi(z)
		\bigr|
		\le C n^{-\alpha/2}.
	\end{equation*}	
	
	In order to highlight the impact of this correction item on the normal approximation, we further examined the uncorrected statistics in \cref{cor:uncorrected}. This inference indicates that when the first-order bias coefficient is non-zero, the normal approximation error of the standardized average estimator has an order of $n^{1/2-\alpha}$; when the first-order bias disappears, the error order reverts to $O(n^{-\alpha/2})$.
	
	The rest of this paper is organized as follows. The main results are stated in \cref{sec:main-results}, the proofs of the main results are given in \cref{sec:proof-completion}, and the remaining proofs are deferred to the appendices.
	
	\section{Main results}\label{sec:main-results}
	
	Let $(S, \mathcal{S})$ be a standard Borel space, and $(X_k)_{k\ge0}$ be a homogeneous Markov chain defined on this space. The transition kernel and invariant probability measure of this chain are denoted by $P$ and $\pi$ respectively. The initial distribution is denoted as $\nu$. For a measurable function $f$,  put
	\[
	Pf(x)=\int_S f(y)P(x,\mathrm{d}y),
	\qquad
	P^t f(x)=\int_S f(y)P^t(x,\mathrm{d}y).
	\]
	
	The following introduces the basic conditions satisfied by the state process and the coefficient function.
	
	\begin{assumption}\label{ass:density-mixing}
		Assume that the Markov chain generated by $P$ is a non-periodic, positive Harris recurrent chain, and there exists a constant $K < \infty$ such that
		\begin{equation}\label{eq-ma-01}
			P(x,A)\le K\pi(A),
			\qquad x\in S,\ A\in\mathcal S.
		\end{equation}
	\end{assumption}
	
	\begin{remark}\label[remark]{rem:density-mixing}
		The finiteness of the state space is not essential. More generally, suppose
		that $P(x,\cdot)\ll\pi$ for every $x\in S$ and that
		\[
		p(x,y)=\frac{dP(x,\cdot)}{d\pi}(y),
		\qquad
		\sup_{x\in S}\operatorname*{ess\,sup}_{y\in S}p(x,y)\le K<\infty.
		\]
		Then $P(x,A)\le K\pi(A)$ for all $x\in S$ and $A\in\mathcal S$.
		Consequently, Assumption~\ref{ass:density-mixing} also applies to
		infinite standard Borel state spaces whenever the chain is positive Harris
		recurrent and aperiodic.
	\end{remark}
	
	\begin{assumption}\label[assumption]{ass:fourth-moments}
		The measurable functions $a,b:S\to\mathbb R$ satisfy
		\[
		\int_{S}\big[\abs{a(x)}^4+\abs{b(x)}^4\big]\pi(dx)
		\le M_{4}<\infty.
		\]
		Write $\bar a=\int_{S}a(x)\pi(dx)$ and assume $\bar a>0$.
	\end{assumption}

	\begin{remark}\label[remark]{rem:conditional-fourth}
		By \cref{eq-ma-01,ass:fourth-moments}, it follow that
		\begin{equation}
		\sup_{x\in S}P\big(\abs{a(x)}^4+\abs{b(x)}^4\big)(x)
			\le KM_{4}.
			\label{eq-ma-06}
		\end{equation}
	\end{remark}
	
	Under the step size and initial value conditions of \cref{eq-in-01}, let's denote $\bar b=\int_S b(x)\,\pi(\mathrm{d}x)$ and $\theta^*=\frac{\bar b}{\bar a}$. And define $h(x)=a(x)-\bar a$ and $\delta(x)=a(x)\theta^*-b(x)$.
	Put $\bar\theta_{n}=n^{-1}\sum_{k=1}^n\theta_{k}$. For $1\le i\le n$, define
	\begin{equation}\label{eq-mb-01}
		q_{i,n}=
		\begin{cases}
			\displaystyle
			\epsilon_{0}\sum_{j=1}^{n}
			\prod_{k=1}^{j}
			\bigl(1-\bar a\epsilon_{k}\bigr),
			& i=0,\\[3mm]
			\displaystyle
			\epsilon_{i}\sum_{j=i}^{n}
			\prod_{k=i+1}^{j}
			\bigl(1-\bar a\epsilon_{k}\bigr),
			& 1\le i\le n-1,\\[3mm]
			\epsilon_{n},
			& i=n.
		\end{cases}
	\end{equation}
	Under the stationary distribution, the long-term variance of $\delta(X_k)$ is denoted as
	\begin{equation}
		\Gamma
		=\int_{S}\delta(x)^2\pi(dx)
		+2\sum_{t=1}^{\infty}\int_{S}\delta(x)P^t\delta(x)\pi(dx),
		\qquad
		\sigma_{\infty}^2=\frac{\Gamma}{\bar a^2},
		\label{eq-mb-04}
	\end{equation}
	consequently, the precise finite sample variance of the dominant weighted term is defined as
	\begin{equation}
		\begin{aligned}
		\widetilde\sigma_{n}^2
		={}&\frac1n\sum_{i=1}^n q_i^2\int_S\delta(x)^2\pi(dx)
		+\frac2n\sum_{1\le i<j\le n}q_iq_j
		\int_S\delta(x)P^{j-i}\delta(x)\pi(dx).
		\end{aligned}
		\label{eq-mb-05}
	\end{equation}
	
	To depict the deviation resulting from the combined effect of Markov correlation and random coefficients, put
	\begin{align}
		&v_{h}(x)=\sum_{t=1}^{\infty}P^th(x),
		\qquad
		\kappa
		=\int_S\delta(x)v_h(x)\pi(dx)
		=\sum_{t=1}^{\infty}\int_S\delta(x)P^th(x)\pi(dx),
		\label{eq-mb-06}\\
		&A_{n}=\sum_{i=1}^{n-1}q_{i}\epsilon_{i},
		\qquad
		\beta_{n}=\frac{\kappa A_{n}}{n},
		\label{eq-mb-07}
	\end{align}
	where $\beta_n$ is the first-order deterministic bias correction term of the average estimator.
	
	The above definition gives the normalized and bias correction quantities used in the main theorem. We have the following theorem.
	\begin{theorem}\label[theorem]{thm:corrected}
		Suppose \cref{ass:density-mixing,ass:fourth-moments} hold and $\Gamma>0$.
		Then there exist constants $C,n_{0}<\infty$, independent of $n$ and the
		initial distribution $\nu$, such that, for every $n\ge n_{0}$,
		\begin{equation}
			\sup_{z\in\mathbb R}
			\bigg|
			\mathbb P \bigg(
			\frac{\sqrt n(\bar\theta_{n}-\theta^*-\beta_{n})}
			{\widetilde\sigma_{n}}
			\le z
			\bigg)
			-\Phi(z)
			\bigg|
			\le Cn^{-\alpha/2}.
			\label{eq-mc-03}
		\end{equation}
	\end{theorem}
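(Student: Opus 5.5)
We prove \cref{thm:corrected} in three stages: an algebraic decomposition of $\sqrt n(\bar\theta_n-\theta^*)$ into a dominant linear statistic plus remainders, a Berry--Esseen bound for the linear statistic via exchangeable pairs, and control of the remainders after removing $\beta_n$.

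\textbf{Stage 1: decomposition.} Write $u_k=\theta_k-\theta^*$. Then
\[
u_k=(1-\bar a\epsilon_k)u_{k-1}-\epsilon_k\delta(X_k)-\epsilon_k h(X_k)u_{k-1}.
\]
Unrolling and averaging, the weights $q_{i,n}$ of \cref{eq-mb-01} appear. This gives
\[
\sqrt n\,\bar u_n=\frac{1}{\sqrt n}\Bigl(q_0u_0-\sum_{i=1}^n q_i\delta(X_i)-\sum_{i=1}^n q_i h(X_i)u_{i-1}\Bigr).
\]
The initial term is $O(n^{-1/2})$, since $q_0$ is bounded because $\sum_j\prod_k(1-\bar a\epsilon_k)$ converges. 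We call $W_n:=-n^{-1/2}\sum_i q_i\delta(X_i)$ the dominant term. Its variance is exactly $\widetilde\sigma_n^2$ up to the effect of the initial law $\nu$. Indeed, \cref{eq-ma-01} gives $P^t(x,\cdot)\le K\pi$, so after one step the chain is dominated by $\pi$, and aperiodic Harris recurrence gives $\|P^t(x,\cdot)-\pi\|_{TV}\to0$ uniformly. We would upgrade this to uniform geometric ergodicity: a Doeblin-type minorization follows from domination plus recurrence. The discrepancy $\nu$ versus $\pi$ then costs $O(n^{-1/2})$.

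For the remainder $R_n=-n^{-1/2}\sum_i q_ih(X_i)u_{i-1}$, solve the Poisson equation $g-Pg=h$ with $g=h+v_h$, which is bounded in $L^4(\pi)$ by geometric ergodicity and \cref{eq-ma-06}. Write $h(X_i)=[g(X_i)-Pg(X_{i-1})]+[Pg(X_{i-1})-Pg(X_i)]$. The martingale part gives $\sum q_i\,\text{mart}_i\,u_{i-1}$. Its second moment is of order $\sum q_i^2\E u_{i-1}^2\lesssim\sum_i i^{-\alpha}$, so this part is $O_{L^2}(n^{-\alpha/2})$ after normalization. The telescoping part, after summation by parts, produces $\sum q_i Pg(X_i)(u_i-u_{i-1})$ plus boundary and $q$-increment terms. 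Since $u_i-u_{i-1}\approx-\epsilon_i\delta(X_i)$, its mean is $-\sum q_i\epsilon_i\int\delta\,Pg\,d\pi=-\kappa A_n$ up to lower order; note $Pg=v_h$. This identifies the bias $\sqrt n\beta_n$. The fluctuation of this part is $O(n^{-1/2}\sqrt{\sum q_i^2\epsilon_i^2})=O(n^{-\alpha})$ or better. We need moment bounds $\E u_k^4\lesssim k^{-2\alpha}$, uniformly in $\nu$. We obtain them by a standard Lyapunov argument using $\bar a>0$, again with a Poisson correction for $h(X_k)$, and fourth moments via \cref{eq-ma-06}.

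\textbf{Stage 2: Berry--Esseen for $W_n/\widetilde\sigma_n$.} Construct an exchangeable pair by choosing a uniform index $I$ and a block $\{I-m,\dots,I+m\}$ with $m\asymp\log n$. The block is resampled from the bridge law given $X_{I-m-1}$ and $X_{I+m+1}$, giving $W'$. Because geometric mixing makes dependence beyond $m$ negligible, the regression condition $\E[W-W'\mid W]=\lambda(W+\mathcal R)$ holds with $\lambda\asymp m/n$. The $q_i$ are slowly varying, $|q_{i+1}-q_i|\lesssim i^{-1}$, so block reweighting costs little. We then apply the unbounded exchangeable-pair bound of \citet{shao2019berry} and \citet{zhang2022berry}. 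This requires bounding $\E|1-\frac{1}{2\lambda}\E[(W-W')^2\mid\mathcal F]|$, a variance of a sum of locally dependent block functionals, which is $O(m/\sqrt n)$. It also requires the $\E|W-W'|^3/\lambda$-type term, of order $m^2 n^{-1/2}$. Both are well below $n^{-\alpha/2}$ except for logs. To make them uniform in $\nu$ we would prefer a regeneration/Nummelin splitting version, which removes the log factor.

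\textbf{Stage 3: combining.} Write $T_n=W_n/\widetilde\sigma_n+\Delta_n$. The decomposition $\Delta_n=\Delta_n^{(1)}+\Delta_n^{(2)}$ has $\|\Delta_n^{(1)}\|_2\lesssim n^{-\alpha/2}$, but a plain $L^2$ Chebyshev argument only gives rate $n^{-\alpha/3}$. So we use the Stein concentration-inequality approach of \citet{shao2019berry}: $\sup_z|P(W+\Delta\le z)-P(W\le z)|\lesssim\E|W\Delta|+\sum\E|\Delta-\Delta^{(i)}|$, with $\Delta$ handled inside the same exchangeable pair. The main obstacle is exactly this: showing that the martingale remainder $n^{-1/2}\sum q_i\,\text{mart}_i\,u_{i-1}$ is smooth under block resampling. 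Resampling a block at $I$ perturbs all subsequent $u_{i-1}$ through the contraction $\prod(1-\bar a\epsilon_k)$, and $E|\Delta-\Delta'|$ must be shown to be $O(\lambda n^{-\alpha/2})$. We would first try to bound the propagated perturbation by $\epsilon_I$ times a geometric-in-$\sum\epsilon$ decay, then sum.
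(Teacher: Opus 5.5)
Your architecture matches the paper's: the same decomposition, identification of $\kappa A_n$ through the Poisson equation for $h$, bridge resampling of blocks of length $\asymp m\asymp\log n$, and the unbounded exchangeable-pair bound of \citet[Corollary~4.3]{zhang2022berry} with the remainder kept inside the pair. The gap is in Stage~3, which is where the rate $n^{-\alpha/2}$ (rather than $n^{-\alpha/3}$) is decided.

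The inequality you quote has lost the difference factor. In the exchangeable-pair bound the remainder enters only through $\E\abs{U_c}$ and $\frac{3}{2\lambda}\E\abs{\widehat D(U_c-U_c')}$. This is the analogue of $\sum_i\E\abs{\xi_i(\Delta-\Delta^{(i)})}$ in the Chen--Shao inequality for independent sums, not of $\sum_i\E\abs{\Delta-\Delta^{(i)}}$. Consequently your target $\E\abs{\Delta-\Delta'}=O(\lambda n^{-\alpha/2})$ is generically false. The local part of the perturbation alone is the difference of $n^{-1/2}\sum_{j\in\mathcal B_I}q_jh(X_j)\Delta_{j-1}$ along two conditionally independent bridges sharing $\Delta_{\ell_I-1}$. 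It is typically of size $\sqrt{m\epsilon_I/n}$, so $\E\abs{\Delta-\Delta'}$ is of order $\sqrt{m/n}\,n^{-\alpha/2}$, a factor $\sqrt{n/m}$ too large. What is actually needed, and what the paper proves, is $\E D^2\le Cm/n$ together with $\E\abs{U_c-U_c'}^2\le C\{m^{2+1/\alpha}n^{-2}+mn^{-1-\alpha}\}$. Cauchy--Schwarz then gives $J_4\le C\{m^{1/2+1/(2\alpha)}n^{-1/2}+n^{-\alpha/2}\}$.

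Your suggested route to the perturbation bound would also fall short of this $L^2$ estimate. Take even the correct boundary size $\norm{H_{r_I+1}}_2\lesssim\sqrt m\,\epsilon_I$, where $H_j=\Delta_j-\Delta_j^{(I)}$, and a geometric-in-$\sum\epsilon_k$ decay afterwards. Summing the tail $L_I'=\sum_{j\ge r_I+2}q_jh(X_j)H_{j-1}$ term by term gives $\sum_j\norm{H_{j-1}}_2\approx\sqrt m\,\epsilon_I\cdot\epsilon_I^{-1}=\sqrt m$. That yields only $\norm{U_c-U_c'}_2\lesssim\sqrt{m/n}$, and then $J_4$ is of order one.

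The missing factor $\sqrt{\epsilon_I}$ comes from three ingredients absent from your plan:
\begin{enumerate}
\item The block-boundary estimate $\norm{H_{r_I}}_4\le C\sqrt m\,\epsilon_I$. It follows from Burkholder's inequality on the martingale part of $\sum_{j\in\mathcal B_I}\epsilon_j\delta(X_j)$ and from $m\epsilon_I\le1$ for $I>K_m\asymp m^{1/\alpha}$.
\item Mean-square contraction of $H_k=(1-\epsilon_ka(X_k))H_{k-1}$. This is not pathwise, since $a$ is unbounded and Markov-dependent. The paper obtains it from the Lyapunov weight $V_k=1-2\epsilon_k\chi$ with $\chi-P\chi=Pa-\bar a$, using that the transition after the block is still $P$ conditionally on the bridge seed (\cref{lem:exchangeable-construction}(ii)).
\item Rewriting the tail with $h(X_j)=\xi_j+v_h(X_{j-1})-v_h(X_j)$, so that it becomes a martingale plus summation-by-parts terms. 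This gives $\E\abs{L_I'}^2\le C\sum_j\E\abs{H^{(I)}_{j-1}}^2\le Cm\epsilon_I$.
\end{enumerate}
Early blocks $I\le K_m$ need a separate crude bound $Cm^2$, which is the source of the $m^{2+1/\alpha}/n^2$ term. Without these, Stage~3 is both misdirected in its target and unproved in its key estimate.
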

	
	Centering at $\theta^*$ does not imply unbiasedness for a finite sample. In fact, according to \cref{eq-in-01},
	\[
	\E(\theta_k-\theta^*)
	=\E(\theta_{k-1}-\theta^*)
	-\epsilon_k\E\{a(X_k)(\theta_{k-1}-\theta^*)+\delta(X_k)\}.
	\]
	Even though $\int_S\delta(x)\pi(dx)=0$, the right side will still be affected by the initial value, transient distribution, as well as the correlation between $a(X_k)$ and $\theta_{k-1}$, thus generally $\E\bar\theta_n \ne \theta^*$. The $\beta_n$ in \cref{thm:corrected} is precisely used to eliminate the first-order Markov multiplicative noise bias; the following inference characterizes the approximation speed without making this correction.
	
	\begin{corollary}\label[corollary]{cor:uncorrected}
		Under the assumptions of \cref{thm:corrected}, 
		\begin{equation}
		\sup_{z\in\mathbb R}
		\bigg |\mathbb P\bigg(
		\frac{\sqrt n(\bar\theta_{n}-\theta^*)}
		{\widetilde\sigma_{n}}
		\le z\bigg)-\Phi(z)\bigg|
		\le C n^{1/2-\alpha}.
		\label{eq-mc-04}
		\end{equation}
		If $\kappa=0$, this upper bound improves to $C n^{-\alpha/2}$.
		If $\kappa\ne0$, then
		\begin{equation}
			\sup_{z\in\mathbb R}
			\bigg |\mathbb P\bigg(
			\frac{\sqrt n(\bar\theta_{n}-\theta^*)}
			{\widetilde\sigma_{n}}
			\le z\bigg)-\Phi(z)\bigg|
			=C_{\kappa}
			n^{1/2-\alpha}+o\big(n^{1/2-\alpha}\big), \quad C_{\kappa}=\frac{\abs{\kappa}\epsilon_{0}}
			{\sqrt{2\pi}\,\bar a(1-\alpha)\sigma_{\infty}}.
			\label{eq-mc-05}
		\end{equation}
		In particular, there are $c>0$ such that
		\begin{equation}
			\sup_{z\in\mathbb R}
			\bigg |\mathbb P\Big(
			\frac{\sqrt n(\bar\theta_{n}-\theta^*)}
			{\widetilde\sigma_{n}} \Big)\bigg|\ge cn^{1/2-\alpha}.
			\label{eq-mc-06}
		\end{equation}
	\end{corollary}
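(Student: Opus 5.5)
We deduce the corollary from \cref{thm:corrected}. Write $W_n=\sqrt n(\bar\theta_n-\theta^*)/\widetilde\sigma_n$ and $T_n=W_n-\mu_n$, where
\[
\mu_n=\frac{\sqrt n\,\beta_n}{\widetilde\sigma_n}=\frac{\kappa A_n}{\sqrt n\,\widetilde\sigma_n}.
\]
For every $z$ we have $\Pp(W_n\le z)=\Pp(T_n\le z-\mu_n)$. \Cref{thm:corrected} then gives
\[
\sup_z\bigl|\Pp(W_n\le z)-\Phi(z-\mu_n)\bigr|\le Cn^{-\alpha/2}.
\]
So everything reduces to the deterministic quantity $D_n=\sup_z|\Phi(z-\mu_n)-\Phi(z)|$, together with the asymptotics of $A_n$ and $\widetilde\sigma_n$.

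\textbf{Step 1 (asymptotics of $q_{i,n}$ and $A_n$).} By a standard summation argument, for $i\le n-n^{\alpha}\log n$ (say) we have $q_{i,n}=\bar a^{-1}(1+O(i^{\alpha-1}))$ up to an exponentially small tail. This follows by comparing $\epsilon_i\sum_{j\ge i}\prod_{k=i+1}^j(1-\bar a\epsilon_k)$ with $\epsilon_i\int_0^\infty e^{-\bar a\epsilon_i s}ds$, using that $\epsilon_k$ varies slowly over windows of length $\epsilon_i^{-1}$. The boundary indices satisfy $q_{i,n}=O(1)$ uniformly in $i$. Hence
\[
A_n=\sum_{i<n}q_{i,n}\epsilon_i=\frac{\epsilon_0}{\bar a}\sum_{i\le n}i^{-\alpha}+O\Bigl(\sum_i i^{-1}+n^{\alpha}\log n\cdot n^{-\alpha}\Bigr)=\frac{\epsilon_0 n^{1-\alpha}}{\bar a(1-\alpha)}\bigl(1+o(1)\bigr).
\]
In particular $A_n=O(n^{1-\alpha})$.

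\textbf{Step 2 (limit of $\widetilde\sigma_n$).} Put $\gamma_t=\int\delta P^t\delta\,d\pi$. \Cref{ass:density-mixing} together with Harris ergodicity and aperiodicity yields absolute summability $\sum_t|\gamma_t|<\infty$; this is presumably already established in the paper's proof of \cref{thm:corrected}, or follows from uniform ergodicity implied by \cref{eq-ma-01}. Combining this with Step 1 and dominated convergence in \cref{eq-mb-05} gives $\widetilde\sigma_n^2\to\Gamma/\bar a^2=\sigma_\infty^2>0$. Therefore $\widetilde\sigma_n\ge c>0$ for $n\ge n_0$ and
\[
\mu_n=\frac{\kappa\epsilon_0}{\bar a(1-\alpha)\sigma_\infty}\,n^{1/2-\alpha}\bigl(1+o(1)\bigr).
\]

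\textbf{Step 3 (conclusion).} The mean value theorem gives $D_n\le|\mu_n|/\sqrt{2\pi}$. A Taylor expansion at $z=\mu_n/2$ gives $D_n=\frac{|\mu_n|}{\sqrt{2\pi}}+O(|\mu_n|^3)$: the supremum of $|\Phi(z-\mu)-\Phi(z)|$ equals $2\Phi(|\mu|/2)-1$. From the triangle inequality,
\[
\bigl|\sup_z|\Pp(W_n\le z)-\Phi(z)|-D_n\bigr|\le Cn^{-\alpha/2}.
\]
Since $\alpha\in(1/2,1)$ we have $\alpha/2>\alpha-1/2$, so $n^{-\alpha/2}=o(n^{1/2-\alpha})$. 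This gives \cref{eq-mc-04}. If $\kappa=0$ then $\mu_n=0$ and the bound is $Cn^{-\alpha/2}$. If $\kappa\ne0$, the expansion gives \cref{eq-mc-05} with exactly $C_\kappa$, and \cref{eq-mc-06} follows for large $n$ with $c=C_\kappa/2$; small $n$ are absorbed by adjusting $c$ only for $n\ge n_0$.

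The main obstacle is Step 1: the precise first-order asymptotics of $A_n$, in particular controlling $q_{i,n}$ uniformly near $i\approx n$ and for small $i$ so that the error is genuinely $o(n^{1-\alpha})$. Step 2's rate is not needed, only convergence, so it is routine once covariance summability is available.
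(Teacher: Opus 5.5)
Your proposal is correct and follows the paper's proof essentially step for step. The paper likewise uses \cref{thm:corrected} to reduce everything to the deterministic shift $\sup_z|\Phi(z-\mu_n)-\Phi(z)|=2\Phi(|\mu_n|/2)-1$, bounds it by the mean value theorem, expands it by Taylor's formula, and transfers the result through the reverse triangle inequality. Your Steps~1--2 re-derive what the paper packages as \cref{lem:An-asymptotic} and \cref{lem:oracle-variance}; your Step~1 even gives the slightly sharper error $O(\log n)$ for $A_n$.
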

	
	\begin{remark}\label[remark]{rem:bias-vanishes}
		If the Markov chain starts from the invariant distribution $\pi$ and the observation sequence $(X_k)_{k\ge0}$ is mutually independent, then $P^th = 0$, which implies $v_h = 0$ and $\kappa = 0$. If $a$ is a constant, the same result holds, i.e., $\kappa = 0$. Therefore, the slower speed of $n^{1/2 - \alpha}$ indeed stems from the combined effect of Markov correlation and multiplicative noise.
	\end{remark}
	
\section{Proofs of main results}\label{sec:proof-completion}
In this section, we will prove \cref{thm:corrected} and \cref{cor:uncorrected}. First, we outline the proof strategy of \cref{thm:corrected} and break it down into five manageable parts; then we present some important auxiliary lemmas and demonstrate our proof and calculation process; finally, we prove \cref{cor:uncorrected}.

\subsection{Main idea of the proof of  \cref{thm:corrected}}\label{subsec:statistical-decomposition}
For each $k\geq 0$, let $\Delta_{k}=\theta_{k}-\theta^{*}$. It follows from \eqref{eq-in-01} that 
	\begin{equation}\label{eq-ka-02}
		\begin{aligned}
		\Delta_k
		&=\bigl(1-\bar a\epsilon_k\bigr)\Delta_{k-1}
		-\epsilon_k\big[(a(X_k)\theta^*-b(X_k)\big]
		-\epsilon_k\bigl(a(X_k)-\bar a\bigr)\Delta_{k-1}\\
		&=\Delta_0\prod_{r=1}^{k}\bigl(1-\bar a\epsilon_r\bigr)
		-\sum_{i=1}^{k}\epsilon_i
		\prod_{r=i+1}^{k}\bigl(1-\bar a\epsilon_r\bigr)
		\bigl[a(X_i)\theta^*-b(X_i)\bigr]\\
		&\quad
		-\sum_{i=1}^{k}\epsilon_i
		\prod_{r=i+1}^{k}\bigl(1-\bar a\epsilon_r\bigr)
		\bigl(a(X_i)-\bar a\bigr)\Delta_{i-1},
		\qquad 1\le k\le n.	
	    \end{aligned}
	\end{equation}
	
	Using \cref{eq-in-02}, we obtain
	\begin{align}
		\sqrt n(\bar\theta_n-\theta^*)
		={}&-\frac1{\sqrt n}\sum_{i=1}^n q_i\delta(X_i)
		+\frac{q_0\Delta_0}{\sqrt n\,\epsilon_0}
		-\frac1{\sqrt n}\sum_{i=1}^n
		q_i\bigl(a(X_i)-\bar a\bigr)\Delta_{i-1}.
		\label{eq-pj-02}
	\end{align}
    On the right side, in sequence, are the weighted additive terms, the initial value terms, and the adaptive terms. To standardize the main term, let
    \begin{align*}
    	&\mu_{\nu,n}=\frac1n\sum_{i=1}^nq_i\int_S\delta(x)(\nu P^i)(dx),\\
    	&\sigma_{\nu,n}^2=\frac1n\bigg\{
    	\sum_{i=1}^nq_i^2\int_S\delta(x)^2(\nu P^i)(dx)
    	+2\sum_{1\le i<j\le n}q_iq_j
    	\int_S\delta(x)P^{j-i}\delta(x)(\nu P^i)(dx)
    	-n^2\mu_{\nu,n}^2\bigg\}.
    \end{align*}
    Define
    \begin{align}
    	W &=-\frac{1}{\sigma_{\nu,n}\sqrt{n}}
    	\bigg\{\sum_{i=1}^nq_{i}
    	\bigg(\delta(X_{i})-\int_S\delta(x)(\nu P^i)(dx)\bigg)\bigg\}.
    	\label{eq-pa-01}\\
    	U_{c}
    	&=\frac{q_{0}\Delta_{0}}{\sigma_{\nu,n}\sqrt{n}\epsilon_{0}}
    	-\frac{1}{\sigma_{\nu,n}\sqrt{n}}
    	\sum_{i=1}^nq_{i} \bigl(a(X_i)-\bar a\bigr)\Delta_{i-1}
    	-\frac{\kappa A_{n}+\mu_{\nu,n}}{\sigma_{\nu,n}\sqrt{n}}.
    	\label{eq-pa-02}
    \end{align}
	Then \cref{eq-pj-02} gives
	\begin{equation}
		W+U_{c}
		=\frac{\sqrt{n}\big\{\bar\theta_{n}-\theta^*-\beta_{n}\big\}}
		{\sigma_{\nu,n}}.
		\label{eq-pa-03}
	\end{equation}
	Based on the above definition, we can obtain
	\begin{equation}
		\E W=0,
		\qquad
		\E W^2=1.
		\label{eq-pa-04}
	\end{equation}
	
	We next construct an exchangeable pair for the leading term in
	\cref{eq-pj-02}.
	
	Fix an integer $m_{0}\le m\le(n-3)/4$ and set $L=2m+1$. Let
	\[
	\mathcal F_{k}=\sigma(X_{0},\ldots,X_{k}),\qquad k\ge0,
	\qquad
	\mathcal F=\sigma(X_{0},X_{1},\ldots).
	\]
	For $1\le i\le n$, define
	\[
	\begin{aligned}
	\mathcal A_{i}
	&=\big\{j\in\{1,\ldots,n\}:\abs{j-i}\le m\big\},
	\qquad
	\mathcal B_{i}
	=\big\{j\in\{1,\ldots,n\}:\abs{j-i}\le2m\big\},\\
	c_{j}&=\#\big\{i:j\in\mathcal A_{i}\big\},
	\qquad
	w_{ij}=\frac{L}{c_{j}}\one_{\{j\in\mathcal A_{i}\}}.
	\end{aligned}
	\]
	
	Define $\mathcal F_{\ell_{i},r_{i}}
	=\sigma(X_{j}:1\le j\le n,\ j\notin\mathcal B_{i})$. After determining $\mathcal F_{\ell_{i},r_{i}}$, replicate the blocks independently from the same-condition bridge distribution to obtain $X_j^{(i)}$, and set $X_j^{(i)} = X_j$ outside of $\mathcal B_i$. Put
	\begin{equation}
		Y_{j}=-q_{j}\delta(X_{j}),
		\qquad
		Y_{j}^{(i)}=-q_{j}\delta(X_{j}^{(i)}).
		\label{eq-pb-03}
	\end{equation}
	Take an $I$ that is independent of all random quantities and uniformly distributed over $\{1,\ldots,n\}$. Let
	\[
	\mathbf X' = \mathbf X^{(I)},\qquad
	W'=\frac{\sum_{j=1}^nY_j^{(I)}+n\mu_{\nu,n}}
	{\sigma_{\nu,n}\sqrt n},\qquad \Xi=W-W'.
	\]
	Define
	\begin{align}
		D
		&=\frac{1}{\sigma_{\nu,n}\sqrt{n}}
		\sum_{j\in\mathcal A_{I}}w_{Ij}
		\big(Y_{j}-Y_{j}^{(I)}\big), \quad
		D^*
		=\frac{1}{\sigma_{\nu,n}\sqrt{n}}
		\sum_{j\in\mathcal A_{I}}w_{Ij}
		\big(\abs{Y_{j}}+\abs{Y_{j}^{(I)}}\big).
		\label{eq-pb-06}
	\end{align}
	Then $D^*\ge\abs D$. Starting from the same $\theta_{0}$, let
	$\Delta_{k}^{(i)}$ be the recursion driven by $(X_{j}^{(i)})$, and define
	$U_{c}'$ from \cref{eq-pa-02} with $(X_{j},\Delta_{j})$ replaced by
	$(X_{j}^{(I)},\Delta_{j}^{(I)})$. Finally, set
	\begin{equation}
		\widehat D=\E\big(D\mid\mathbf X,\mathbf X'\big),
		\qquad
		\widehat D^*=\E\big(D^*\mid\mathbf X,\mathbf X'\big).
		\label{eq-pb-08}
	\end{equation}

	Thus $(\mathbf X,\mathbf X')$ is exchangeable, and the versions in
	\cref{eq-pb-08} can be chosen so that $\widehat D$ is antisymmetric
	and $\widehat D^*$ is symmetric. Since $\abs{D}\le D^*$, conditional
	Jensen's inequality gives
	\[
	\abs{\widehat D}
	\le \E(\abs{D}\mid\mathbf X,\mathbf X')
	\le \widehat D^*.
	\]
	As $\Xi$ is measurable with respect to
	$\sigma(\mathbf X,\mathbf X')$, the tower property yields
	\begin{equation}
		\begin{aligned}
			\E(\widehat D\mid\mathbf X)
			&=\E(D\mid\mathbf X),\quad
			\E(\widehat D\Xi\mid\mathbf X)
			=\E(D\Xi\mid\mathbf X), \quad
			\E(\widehat D^*\Xi\mid\mathbf X)
			=\E(D^*\Xi\mid\mathbf X).
		\end{aligned}
		\label{eq-pb-10}
	\end{equation}
	Similarly, since $U_c-U_c'$ is measurable with respect to
	$\sigma(\mathbf X,\mathbf X')$,
	\begin{equation}
		\begin{aligned}
			\E\abs{\widehat D(U_c-U_c')}
			&=
			\E\!\big[
			\abs{\E(D\mid\mathbf X,\mathbf X')}
			\abs{U_c-U_c'}
			\big]
			\le \E\abs{D(U_c-U_c')}.
		\end{aligned}
		\label{eq-pb-11}
	\end{equation}
	
	Conditioning on the original path and then averaging over the bridge copy and the uniform index $I$, we obtain from \cref{eq-pb-06}
	\begin{align*}
		\E\big(D\mid\mathbf X\big)
		&=\frac{1}{n\sigma_{\nu,n}\sqrt n}
		\sum_{i=1}^n\sum_{j\in\mathcal A_{i}}w_{ij}
		\big\{Y_{j}-\E\big(Y_{j}^{(i)}\mid\mathcal F_{\ell_{i},r_{i}}\big)\big\}\\
		&=\frac{1}{n\sigma_{\nu,n}\sqrt n}
		\bigg\{
		\sum_{j=1}^nY_{j}\sum_{i=1}^nw_{ij}
		-\sum_{i=1}^n\sum_{j\in\mathcal A_{i}}w_{ij}
		\E\big(Y_{j}^{(i)}\mid\mathcal F_{\ell_{i},r_{i}}\big)
		\bigg\}\\
		&=\frac{L}{n}\bigg\{
		W-\frac{\mu_{\nu,n}}{\sigma_{\nu,n}\sqrt n}
		-\frac{1}{L\sigma_{\nu,n}\sqrt n}
		\sum_{i=1}^n\sum_{j\in\mathcal A_{i}}w_{ij}
		\E\big(Y_{j}^{(i)}\mid\mathcal F_{\ell_{i},r_{i}}\big)
		\bigg\},
	\end{align*}
	where the last equality follows from $\sum_{i=1}^nw_{ij}=L$ and the
	definitions of $Y_{j}$ and $W$. Set
	\begin{equation}
		\lambda=\frac{L}{n},
		\qquad
		R=-\frac{1}{L\sigma_{\nu,n}\sqrt n}
		\sum_{i=1}^n\sum_{j\in\mathcal A_{i}}w_{ij}
		\E\big(Y_{j}^{(i)}\mid\mathcal F_{\ell_{i},r_{i}}\big)
		-\frac{\mu_{\nu,n}}{\sigma_{\nu,n}\sqrt n}.
		\label{eq-pc-03}
	\end{equation}
	Therefore,
	\begin{equation}
		\E\big(D\mid\mathbf X\big)=\lambda(W+R).
		\label{eq-pc-02}
	\end{equation}
	
	When $\E W = 0$, $\E W^2 = 1$, $\widehat D$ is antisymmetric,
	$\widehat D^*$ is symmetric and $\widehat D^* \ge \abs{\widehat D}$,
	the exchange inequality in \citet[Corollary~4.3]{zhang2022berry} is
	\begin{equation}\label{eq-pb-07}
		\begin{aligned}
			\sup_{z\in\mathbb R}
			\big|
			\mathbb P \big(W+U_{c}\le z\big)-\Phi(z)
			\big|
			\le{}&
			\E \big| 1-\frac1{2\lambda}\E(\widehat D\Xi\mid\mathbf X)\big|
			+\frac1\lambda
			\E\big|\E(\widehat D^*\Xi\mid\mathbf X)\big|
			+\E\abs R\\
			&+\frac3{2\lambda}
			\E\big|\widehat D(U_{c}-U_{c}')\big|
			+\E\abs U_{c} .
		\end{aligned}
	\end{equation}
	In the order in which they occur on the right-hand
	side of \cref{eq-pb-07}, write
	\begin{align}
		J_{1}&=\E \big| 1-\frac1{2\lambda}\E(\widehat D\Xi\mid\mathbf X)\big|,
		&J_{2}&=\frac{1}{\lambda}
		\E\big|\E(\widehat D^*\Xi\mid\mathbf X)v,
		\notag\\
		J_{3}&=\E\abs{R},
		&J_{4}&=\frac{3}{2\lambda}
		\E\big|\widehat D(U_{c}-U_{c}')\big|,\qquad
		J_{5}=\E\abs{U_{c}},
		\label{eq-pb-12}
	\end{align}
	Here, $J_1$ and $J_2$ represent the projection errors of the local quadratic terms of the conditional bridge, $J_3$ is the regression residual term, $J_4$ is the influence of the bridge substitution on the corrected residual term, and $J_5$ is the magnitude of the corrected residual term itself. 
		
\subsection{Preliminary lemmas}\label{subsec:preliminary-lemmas}
	In this section, we will present some auxiliary lemmas that are used to prove the results. The \cref{lem:oracle-variance,lem:bridge-kernel} support bridge resampling and covariance estimation, \cref{lem:projection-sum} controls $J_1$ and $J_2$, \cref{lem:corrected-remainder} controls $J_4$ and $J_5$, and \cref{lem:An-asymptotic} is used for \cref{cor:uncorrected} in \cref{subsec:corollary-proofs}.
		
	The following lemma provides a covariance bound used to estimate
	$J_1$ and prove \cref{lem:oracle-variance}, and a weighted-sum bound
	used to estimate $J_4$ and prove \cref{lem:corrected-remainder}.
	Its proof is given in \cref{app:common-tools}.
	\begin{lemma}\label[lemma]{lem:markov-toolkit}
		Suppose \cref{ass:density-mixing} holds and $X_0\sim\nu$.
		\begin{enumerate}[label=(\roman*)]
			\item For $r\ge0$ and $t\ge1$, let $\varphi$ be
			$\mathcal F_r$-measurable and $\psi$ be measurable with respect to $\sigma(X_{r+t},X_{r+t+1},\ldots)$. If
			$\E\abs{\varphi}<\infty$ and
			$\int_S\E_x\abs{\psi}\,\pi(dx)<\infty$, then
			\begin{equation}
				\abs{\cov(\varphi,\psi)}
				\le C\rho^t\E\abs{\varphi}
				\int_S\E_x\abs{\psi}\,\pi(dx).
				\label{eq-pe-01}
			\end{equation}
			
			\item If $f\in L^2(\pi)$ is centered, then, for every
			finitely supported deterministic sequence $(c_i)_{i\ge1}$,
			\begin{equation}
				\E\Big(\sum_{i\ge1}c_i f(X_i)\Big)^2
				\le C_f\bigg\{
				\sum_{i\ge1}c_i^2+
				\big(\sum_{i\ge1}\abs{c_i}\rho^i\big)^2
				\bigg\}.
				\label{eq-pe-02}
			\end{equation}
		\end{enumerate}
	\end{lemma}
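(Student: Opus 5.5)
The plan is to derive both parts from a single uniform mixing estimate that comes from the one-step domination \cref{eq-ma-01}.

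\emph{Step 1: uniform ergodicity.} From \cref{eq-ma-01} we get $P^{t}(x,\cdot)\le K\pi$ for every $t\ge1$. Indeed,
\[
P^{t}(x,A)=\int P^{t-1}(x,dy)P(y,A)\le K\pi(A).
\]
Combined with positive Harris recurrence and aperiodicity, this should give uniform ergodicity: there are constants $C<\infty$ and $\rho\in(0,1)$ such that
\[
\sup_x\norm{P^t(x,\cdot)-\pi}_{TV}\le C\rho^t .
\]
The first route I would try is to show that $\pi$ is small for $P^{t_0}$. The set where the density of $P^{t_0}(x,\cdot)$ with respect to $\pi$ is bounded below has positive $\pi$-measure. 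Domination then prevents mass from escaping, so the whole space is petite, and by Meyn--Tweedie the chain is uniformly ergodic. As a fallback, the upper density bound $K$ directly controls $\chi^2$-type distances from $\pi$ after one step. I would also need a uniform $L^2(\pi)$ contraction, namely
\[
\norm{P^t f}_{L^2(\pi)}\le C\rho^t\norm{f}_{L^2(\pi)}
\]
for centered $f$. This follows from uniform ergodicity through the standard $L^\infty\to L^2$ interpolation argument for reversible chains. For non-reversible chains I would instead use the bound $\norm{P^t f}_\infty\le 2C\rho^{t}\sup\abs{f}$ together with the density bound. I expect this step, getting geometric decay with no reversibility and only $L^1/L^2$ test functions, to be the main obstacle.

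\emph{Step 2: part (i).} Condition on $\mathcal F_{r}$ and use the Markov property:
\[
\cov(\varphi,\psi)=\E\Big[\big(\varphi-\E\varphi\big)\big(g(X_r)-\E g(X_r)\big)\Big],\qquad g(x)=\E\big[\E_{X_{t}}^{\,}\psi' \mid X_0=x\big],
\]
which amounts to $g(x)=\int P^{t}(x,dy)G(y)$ with $G(y)=\E_y\psi$ (after shifting time by $r+t$). Next, write $P^t(x,dy)-\nu P^{r+t}(dy)$ and bound its density with respect to $\pi$ by $C\rho^{t}$ in sup norm. For this I use $P^{t}=P^{t-1}P$ and the fact that the density of $P(x,\cdot)$ is bounded by $K$: the signed measure $(P^{t-1}(x,\cdot)-\mu)P$ has $\pi$-density at most $K\norm{P^{t-1}(x,\cdot)-\mu}_{TV}$. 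This yields
\[
\abs{g(x)-\E g(X_r)}\le C\rho^t\int\abs{G}\,d\pi\le C\rho^{t}\int\E_x\abs{\psi}\pi(dx),
\]
and multiplying by $\E\abs{\varphi}$ (absorbing the factor $2$) gives \cref{eq-pe-01}.

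\emph{Step 3: part (ii).} Expand the square:
\[
\E\Big(\sum_{i\ge1}c_if(X_i)\Big)^2=\sum_{i,j}c_ic_j\,\E f(X_i)f(X_j).
\]
For $i\le j$, write $\E f(X_i)f(X_j)=\E[f(X_i)P^{j-i}f(X_i)]$. Under $\nu P^i\le K\pi$ this is at most
\[
K\norm{f}_{2}\norm{P^{j-i}f}_{2}\le KC\rho^{j-i}\norm f_2^2
\]
by the $L^2(\pi)$ contraction from Step 1. By Schur's test, the sum of $\abs{c_ic_j}\rho^{\abs{j-i}}$ is at most $C\sum_i c_i^2$. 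The term $(\sum_i\abs{c_i}\rho^i)^2$ is the extra allowance for the nonstationary start. If the $L^2$ contraction is only available for bounded $f$, I would instead split $f$ into a truncated bounded part and a tail. The bounded part is handled with $\norm{P^tf}_\infty$ and the cross terms use $\abs{\E_{\nu P^i}(\cdot)-\E_\pi(\cdot)}\le C\rho^{i}$, which produces exactly the second term. The constant $C_f$ may depend on $f$ through $\norm f_2$ and $K$.
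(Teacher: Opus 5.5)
Your proposal is correct, but it reaches the key mixing estimate by a different route from the paper.

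\textbf{The paper's route.} The paper proves \cref{lem:geometric-consequence} functional-analytically. The bound $p_1\le K$ makes $P$ Hilbert--Schmidt on $L^2(\pi)$. Mixing leaves $1$ as the only (simple) peripheral eigenvalue, so centered functions decay geometrically in $L^2(\pi)$. Chapman--Kolmogorov then upgrades this to $\abs{P^t(x,A)-\pi(A)}\le C\rho^t\pi(A)$. Part (i) is then the conditioning argument you give, and part (ii) applies (i) to the covariances and bounds the mean separately.

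\textbf{Your route.} You first get uniform ergodicity in total variation probabilistically. You then upgrade it to the same relative bound by one more use of domination: $\abs{\sigma P(A)}\le K\abs{\sigma}(S)\,\pi(A)$ for $\sigma=P^{t-1}(x,\cdot)-\mu$. This trades compactness and spectral theory for the Harris ergodic theorem, which is more elementary, and both routes end at \cref{eq-ma-03}.

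\textbf{Points to tighten.}
\begin{enumerate}
\item Your petite-set sketch in Step 1 is vague. A positive-measure set where the density of $P^{t_0}(x,\cdot)$ is bounded below does not by itself make $S$ petite. The clean version is $\norm{P^{n+1}(x,\cdot)-\pi}_{\mathrm{TV}}\le K\int_S\norm{P^n(y,\cdot)-\pi}_{\mathrm{TV}}\,\pi(dy)\to0$ uniformly in $x$, by the aperiodic Harris ergodic theorem and dominated convergence. Submultiplicativity of $\sup_{x,y}\norm{P^n(x,\cdot)-P^n(y,\cdot)}_{\mathrm{TV}}$ then gives the geometric rate.
\item The detour through reversibility or truncation for the $L^2(\pi)$ contraction is unnecessary. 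Your Step 2 bound with $\mu=\pi$ already gives $\norm{P^tf}_\infty\le C\rho^t\int_S\abs f\,d\pi$ for centered $f$.
\end{enumerate}

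Finally, your Schur-test argument in (ii) with the change of measure $\nu P^i\le K\pi$ yields the bound $C_f\sum_ic_i^2$ alone. This is consistent with \cref{eq-pe-02}, since $(\sum_i\abs{c_i}\rho^i)^2\le C\sum_ic_i^2$ by Cauchy--Schwarz.
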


	This lemma is used to compare the means and variances of the dominant weighted terms under different initial distributions and ensure that the standardized quantities are non-degenerate; its proof is given in \cref{app:common-tools}.
	
	\begin{lemma}\label[lemma]{lem:oracle-variance}
		Assuming $\Gamma > 0$. Then the finite sample variance of the main term in the stationary reference scenario satisfies
		\begin{equation}
			\widetilde\sigma_{n}^{\,2}
			\longrightarrow
			\frac{\Gamma}{\bar a^{\,2}}.
			\label{eq-pi-01}
		\end{equation}
		
		Furthermore, there exists a constant $C < \infty$, independent of both $n$ and the initial distribution $\nu$, such that for any initial distribution $\nu$,
		\begin{equation}
			\abs{\mu_{\nu,n}}\le C,
			\qquad
			\abs{\sigma_{\nu,n}^{2}-\widetilde\sigma_{n}^{\,2}}
			\le Cn^{-1}.
			\label{eq-pi-04}
		\end{equation}
	\end{lemma}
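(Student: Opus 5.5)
The plan splits into three parts: bounds on the weights $q_i$, the limit of $\widetilde\sigma_n^2$, and a comparison of $\nu$-quantities with stationary ones.

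\textbf{Weights.} We first record elementary facts about $q_i=q_{i,n}$ from \cref{eq-mb-01}. Comparing $\sum_{j\ge i}\prod_{k=i+1}^j(1-\bar a\epsilon_k)$ with $\int_0^\infty e^{-\bar a\epsilon_i s}\,ds$ shows:
\begin{itemize}
\item $\sup_{i,n}\abs{q_i}\le C$;
\item $q_i\to 1/\bar a$ uniformly for $i\le n-n^{\alpha}\log^2 n$, with $\abs{q_i-1/\bar a}\le C(i^{\alpha-1}+e^{-c(n-i)/n^\alpha})$;
\item $\abs{q_{i+1}-q_i}\le C\epsilon_i$ in the bulk.
\end{itemize}
Only finitely many indices ($i\le$ const) need separate care, and they contribute $O(1)$ terms.

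\textbf{Limit of $\widetilde\sigma_n^2$.} Set $\gamma_t=\int\delta P^t\delta\,d\pi$. Since $\delta\in L^2(\pi)$ is centered ($\int\delta\,d\pi=\bar a\theta^*-\bar b=0$), \cref{lem:markov-toolkit}(i) with $\varphi=\delta(X_0)$, $\psi=\delta(X_t)$ under $\pi$ gives $\abs{\gamma_t}\le C\rho^t$. Hence $\sum_t\abs{\gamma_t}<\infty$ and $\Gamma$ is well defined. Write
\[
\widetilde\sigma_n^2=\frac1n\sum_i q_i^2\gamma_0+\frac2n\sum_{t\ge1}\gamma_t\sum_{i\le n-t}q_iq_{i+t}.
\]
For each fixed $t$, $\frac1n\sum_{i\le n-t} q_iq_{i+t}\to \bar a^{-2}$. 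This holds because $q_i\to1/\bar a$ except on a set of indices of size $o(n)$, and the $q_i$ are uniformly bounded. Dominated convergence over $t$, using $\abs{\gamma_t}\le C\rho^t$ and the uniform bound $C$ on the inner averages, then yields $\widetilde\sigma_n^2\to(\gamma_0+2\sum_t\gamma_t)/\bar a^2=\Gamma/\bar a^2$, which is \cref{eq-pi-01}.

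\textbf{Comparison with the stationary case.} The key input is geometric convergence uniform in $\nu$: $\abs{\int g\,d(\nu P^i)-\int g\,d\pi}\le C\rho^{i}\norm{g}$ for suitable $g$. I would derive it from \cref{eq-ma-01}: $\nu P$ has density at most $K$ with respect to $\pi$, so for $i\ge1$,
\[
\int g\,d(\nu P^i)=\int (P^{i-1}g)\,\frac{d\nu P}{d\pi}\,d\pi .
\]
For centered $g$, Cauchy--Schwarz gives a bound $\sqrt K\,\norm{P^{i-1}g}_{L^2(\pi)}$, so one needs $L^2(\pi)$ geometric decay. This is exactly what \cref{lem:markov-toolkit}(i) encodes. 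Indeed, taking $\varphi=\one\{X_1\in\cdot\}$-type indicators, or more simply applying (i) with $r=1$, $\varphi\equiv1$ replaced by the density argument, gives $\abs{\E_\nu g(X_i)}\le C\rho^{i}\int\abs g\,d\pi$ uniformly in $\nu$.

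\textbf{Bound on $\mu_{\nu,n}$.} Apply this with $g=\delta$: then $\abs{\mu_{\nu,n}}\le \frac{C}{n}\sum_i\abs{q_i}\rho^i\le C/n\le C$. This is even stronger than required.

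\textbf{Bound on $\sigma^2_{\nu,n}-\widetilde\sigma_n^2$.} The difference of the diagonal terms is $\frac1n\sum_iq_i^2\bigl(\int\delta^2d(\nu P^i)-\int\delta^2d\pi\bigr)$. For the cross terms, apply the same estimate to $g_t=\delta P^t\delta-\gamma_t$, each term for fixed $i$. To make the sum over $t$ summable I would use a two-sided bound: the difference is at most $C\rho^{\max(i,t)}$. The factor $\rho^i$ comes from convergence of $\nu P^i$. The factor $\rho^t$ comes from $\abs{\int\delta P^t\delta\,d(\nu P^i)}\le \norm{\frac{d\nu P^i}{d\pi}}_\infty\norm{\delta}_{L^2}\norm{P^t\delta}_{L^2}\le CK\rho^t$, together with $\abs{\gamma_t}\le C\rho^t$. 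Then
\[
\frac1n\sum_{i,t}\rho^{\max(i,t)}\le \frac Cn .
\]
The remaining term $n\mu_{\nu,n}^2\le C/n$. This gives \cref{eq-pi-04}.

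\textbf{Main obstacle.} The main obstacle is the uniform-in-$\nu$ geometric bound for functions that are only in $L^2(\pi)$ or $L^1(\pi)$ rather than bounded, i.e.\ extracting a clean $\rho^i$ decay for $\delta^2$ (which lies only in $L^2$, given the fourth moments) from \cref{lem:markov-toolkit}(i). I would handle it by the one-step-density trick above: after one step the law is dominated by $K\pi$, reducing everything to stationary $L^1/L^2$ mixing estimates.
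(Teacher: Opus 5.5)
Your proposal is correct and follows essentially the paper's argument. For \cref{eq-pi-01}, both you and the paper group $\widetilde\sigma_n^2$ by lags, use $\abs{\gamma_t}\le C\rho^t$, and apply dominated convergence; for the comparison, both use geometric convergence of $\nu P^i$ to $\pi$, uniform in $\nu$, to get $\abs{\mu_{\nu,n}}\le C/n$ and the $O(n^{-1})$ bound.

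The only differences are minor:
\begin{enumerate}
\item The paper reads the uniform-in-$\nu$ decay $\abs{\int g\,d(\nu P^i)-\int g\,d\pi}\le C\rho^i\int\abs{g}\,d\pi$ directly off the multiplicative bound \cref{eq-ma-03}. So the step you flag as the main obstacle is immediate, and your one-step-density detour through \cref{lem:markov-toolkit} is valid but unnecessary.
\item The paper bounds each cross term by $C\rho^i\int\abs{\delta P^{t}\delta}\,d\pi\le C\rho^{i+t}$, using the pointwise bound $\abs{P^t\delta}\le C\rho^t$. This is sharper than your $C\rho^{\max(i,t)}$, though both sum to $O(1)$.
\end{enumerate}
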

	
	The bias expansion in the next lemma is used in the proof of
	\cref{cor:uncorrected}. Its proof is given in \cref{app:common-tools}
	
	\begin{lemma}\label[lemma]{lem:An-asymptotic}
		As $n\to\infty$,
		\begin{equation*}
			A_{n}\sim\frac{\epsilon_{0}}{\bar a(1-\alpha)}n^{1-\alpha},
			\qquad
			\beta_{n}=\frac{\kappa\epsilon_{0}}{\bar a(1-\alpha)}n^{-\alpha}
			+o\big(n^{-\alpha}\big).
		\end{equation*}
	\end{lemma}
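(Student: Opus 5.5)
The plan is to reduce everything to the asymptotics of the weights $q_i$ defined in \cref{eq-mb-01}, since by \cref{eq-mb-07} we have $A_n=\sum_{i=1}^{n-1}q_i\epsilon_i$ and $\beta_n=\kappa A_n/n$. Once $A_n\sim \epsilon_0 n^{1-\alpha}/(\bar a(1-\alpha))$ is established, the statement about $\beta_n$ follows at once, because $\epsilon_0 n^{1-\alpha}/n=\epsilon_0n^{-\alpha}$. (If $\kappa=0$ the claim about $\beta_n$ is trivial.)

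The first step is to show that $q_i\to 1/\bar a$ for $i$ in the bulk. Write $\Pi_{i,j}=\prod_{k=i+1}^{j}(1-\bar a\epsilon_k)$. Choose $k_0$ such that $\bar a\epsilon_k<1$ for $k>k_0$; finitely many early indices contribute $O(1)$ to $A_n$, which is negligible against $n^{1-\alpha}$.

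For $i>k_0$, use the telescoping identity $\bar a\epsilon_{j}\Pi_{i,j-1}=\Pi_{i,j-1}-\Pi_{i,j}$. Summing over $j$ gives $\bar a\sum_{j=i+1}^{n}\epsilon_j\Pi_{i,j-1}=1-\Pi_{i,n}$. The quantity $q_i/\epsilon_i=\sum_{j=i}^n\Pi_{i,j}$ differs from $\sum_{j=i+1}^{n+1}\epsilon_j\Pi_{i,j-1}/\epsilon_{j}$ only by reindexing. The factor $\epsilon_i/\epsilon_j=(j/i)^{\alpha}$ is close to $1$ as long as $j-i=o(i)$.

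Since $\Pi_{i,j}\le \exp\{-\bar a\sum_{k=i+1}^j\epsilon_k\}$, the effective range is $j-i\lesssim i^{\alpha}\log i$, which is indeed $o(i)$. This yields
\[
q_i=\frac{1}{\bar a}\bigl(1-\Pi_{i,n}\bigr)\bigl(1+O(i^{\alpha-1}\log i)\bigr)+\text{(tail)},
\]
and hence $\sup_i q_i\le C$. It also gives $q_i\to1/\bar a$ uniformly over $i\le n-n^{\alpha}\log^2 n$, where $\Pi_{i,n}\le n^{-c}$.

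With these estimates, $A_n$ splits into three pieces. The bulk piece is $\sum_{i\le n-n^{\alpha}\log^2n}q_i\epsilon_i=(1/\bar a+o(1))\sum_{i\le n}\epsilon_0 i^{-\alpha}$. The boundary piece is at most $C\sum_{i>n-n^{\alpha}\log^2n}\epsilon_i\le C n^{\alpha}\log^2 n\cdot n^{-\alpha}=O(\log^2 n)$, which is $o(n^{1-\alpha})$. Finally, $\sum_{i\le n}i^{-\alpha}\sim n^{1-\alpha}/(1-\alpha)$, which gives the asymptotics of $A_n$.

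The main obstacle is the uniform approximation $q_i\approx 1/\bar a$, and in particular the error of replacing $\epsilon_j$ by $\epsilon_i$ inside the sum. I will handle it with a summation-by-parts estimate. The key bound is $|\epsilon_i/\epsilon_j-1|\le C(j-i)/i$, and the weighted sum satisfies $\sum_j \Pi_{i,j}(j-i)/i\lesssim i^{2\alpha-1}$. Relative to $q_i/\epsilon_i\asymp i^{\alpha}$, this is an error of relative size $i^{\alpha-1}\to0$. This only requires $\alpha<1$, which holds by assumption.
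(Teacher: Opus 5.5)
Your proposal is correct and is essentially the paper's argument. You split $A_n=\sum_{i<n}q_i\epsilon_i$ into three ranges: an early range, a bulk range where $q_i\to1/\bar a$, and a terminal window of length of order $n^{\alpha}$ up to logarithms, where only $\sup_i\abs{q_i}\le C$ is used. You then apply $\sum_{i<n}\epsilon_i=\epsilon_0n^{1-\alpha}/(1-\alpha)+O(1)$, exactly as the paper does.

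The only difference is how the bulk behaviour of $q_i$ is obtained. You derive it from the telescoping identity $\bar a\sum_{j=i+1}^{n}\epsilon_j\Pi_{i,j-1}=1-\Pi_{i,n}$, with an explicit $O(i^{\alpha-1})$ error. The paper instead cites \cref{lem:PR-weights}, whose proof uses a Riemann-sum comparison with $\int_0^\infty e^{-\bar a u}\,du$.

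One small correction: that error does not vanish for bounded $i$. So $q_i\to1/\bar a$ is uniform only over $k_0(n)\le i\le n-n^{\alpha}\log^2n$ with $k_0(n)\to\infty$ (the paper takes $k_0=\lceil\log(en)\rceil$). Summing your explicit error directly repairs this: $\sum_{i}\epsilon_i i^{\alpha-1}\log i=O(\log^2n)=o(n^{1-\alpha})$. This even gives the quantitative form $A_n=\epsilon_0n^{1-\alpha}/\{\bar a(1-\alpha)\}+O(\log^2n)$.
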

	
	From \cref{rem:density-mixing}, we can take a transfer density $p_t(x,y)$ that is jointly measurable with respect to $(x,y)$, such that
	\[
	P^t(x,A)=\int_A p_t(x,y)\,\pi(dy),
	\qquad
	\abs{p_t(x,y)-1}\le C\rho^t,
	\]
	where $t \ge 1$, $x \in S$, and the inequality holds for $\pi$-almost-everywhere $y$. For $j \ge 1$, let
	\[
	d_j(y)=\int_S p_j(x,y)\,\nu(dx).
	\]
	Then $d_j$ is the density of $X_j$ with respect to $\pi$, and it satisfies
	\[
	\abs{d_j(y)-1}\le C\rho^j,
	\qquad
	0\le d_j(y)\le C.
	\]
	Therefore, the integrable moment estimation under $\pi$ can be uniformly used to control $X_j$ ($j \ge 1$).
	
	Fix $n \ge 1$ and $1 \le \ell \le r \le n$, and let
	\[
	[\ell,r]=\{\ell,\ldots,r\},
	\qquad
	\mathcal F_{\ell,r}
	=
	\sigma\bigl(X_s:1\le s\le n,\ s\notin[\ell,r]\bigr),
	\]
	when $\ell > 1$, the left boundary is $X_{\ell - 1}$; when $r < n$, the right boundary is $X_{r + 1}$. If $\ell = 1$, then $X_0$ can be generated along with the block $(X_1, \ldots, X_r)$ during the bridge resampling.
	
	The next two lemmas respectively establish the conditional bridge estimation and the exchange property of local resampling, providing a foundation for the construction of the exchange pairs. Its proof is given in \cref{app:auxiliary-lemmas}.
	\begin{lemma}\label[lemma]{lem:bridge-kernel}
		Assuming that \cref{ass:density-mixing} holds. Then there exists a constant $C < \infty$ and an integer $m_{0} \ge 1$ such that for any $n \ge 1$ and $1 \le \ell \le r \le n$, the following conclusion is true.
		
		\begin{enumerate}[label=(\roman*),ref=(\roman*)]
			
			\item\label{item:bridge-density}
			For any $\ell \le j \le r$, given $\mathcal F_{\ell,r}$,
			the conditional density of $X_j$ with respect to $\pi$ is
			\[
			\begin{cases}
				\displaystyle
				\frac{
					p_{j-\ell+1}(x,z)\,
					p_{r-j+1}(z,y)
				}{
					p_{r-\ell+2}(x,y)
				},
				&
				\ell>1,\ r<n,\
				x=X_{\ell-1},\ y=X_{r+1},
				\\[3ex]
				\displaystyle
				\frac{
					d_j(z)\,
					p_{r-j+1}(z,y)
				}{
					d_{r+1}(y)
				},
				&
				\ell=1,\ r<n,\
				y=X_{r+1},
				\\[3ex]
				p_{j-\ell+1}(x,z),
				&
				\ell>1,\ r=n,\
				x=X_{\ell-1},
				\\[1ex]
				d_j(z),
				&
				\ell=1,\ r=n.
			\end{cases}
			\]
			The above formula holds for $\pi$-almost everywhere $z$ and for the joint distribution of the corresponding boundary variables almost everywhere.
			When $\ell > 1$, $r < n$, and
			$r - \ell + 2 \ge 2m_0$, the denominator
			$p_{r - \ell + 2}(x, y)$ has a uniform positive lower bound.
			
			\item\label{item:bridge-uniform-bound}
			The conditional densities in part~\ref{item:bridge-density} are uniformly bounded in $z$ by a constant $C$, for almost every admissible value of the boundary variables. The required length condition is
			\begin{align*}
				r-\ell+2&\ge 2m_{0},
				&&\text{if }\ell>1\text{ and }r<n,\\
				r+1&\ge m_{0},
				&&\text{if }\ell=1\text{ and }r<n.
			\end{align*}
			When $r=n$, no additional length condition is required, regardless of whether $\ell=1$ or $\ell>1$.
			
			\item\label{item:bridge-forgetting}
			Let $f \in L^1(\pi)$ satisfy $\int_S f(x)\,\pi(dx)=0$. Take $m \ge m_{0}$. If $\ell > 1$, assume
			$j - \ell + 1 \ge m$; if $r < n$, assume
			$r - j + 1 \ge m$. Then
			\[
			\big|
				\E\bigl[f(X_j)\mid\mathcal F_{\ell,r}\bigr]
			\big|
			\le
			C\bigl(
			\rho^m+\one_{\{\ell=1\}}\rho^j
			\bigr)
			\int_S\abs{f(x)}\,\pi(dx).
			\]
			Specifically, when $\ell = 1$ and $r = n$,
			\[
			\abs{\E f(X_j)}
			\le
			C\rho^j
			\int_S\abs{f(x)}\,\pi(dx).
			\]
			
			\item\label{item:bridge-realization}
			Given the existing boundary variables, the conditional distribution of the block
			$(X_{\ell},\ldots,X_r)$ can be generated measurably by an independent
			$\operatorname{Unif}(0,1)$ random variable.
			When $\ell=1$, $X_0$ can be generated along with this block,
			so that $(X_0,\ldots,X_r)$ has the conditional distribution given by
			$\mathcal F_{1,r}$.	
		\end{enumerate}
	\end{lemma}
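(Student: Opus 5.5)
We prove the four parts of \cref{lem:bridge-kernel} in the order \ref{item:bridge-density}, \ref{item:bridge-uniform-bound}, \ref{item:bridge-forgetting}, \ref{item:bridge-realization}. The inputs are the transition densities $p_t$ with $\abs{p_t(x,y)-1}\le C\rho^t$ and the marginal densities $d_j$ with $\abs{d_j-1}\le C\rho^j$ and $0\le d_j\le C$. We fix $m_0$ so large that $C\rho^{m_0}\le 1/2$. Then $p_t\in[1/2,3/2]$ for $t\ge m_0$, and likewise $d_j\in[1/2,3/2]$ for $j\ge m_0$.

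\textbf{Part \ref{item:bridge-density}.} By the Markov property, the joint density of $(X_{\ell-1},X_j,X_{r+1})$ with respect to $\pi^{\otimes3}$ factorizes as
\[
d_{\ell-1}(x)\,p_{j-\ell+1}(x,z)\,p_{r-j+1}(z,y).
\]
Conditioning on $\mathcal F_{\ell,r}$ reduces to conditioning on the two boundary variables, again by the Markov property: the past before $\ell-1$ and the future after $r+1$ are conditionally independent of the block given the boundary. Dividing the joint density by the marginal density of the boundary pair gives the first formula. The Chapman--Kolmogorov identity $\int p_s(x,z)p_t(z,y)\,\pi(dz)=p_{s+t}(x,y)$ identifies that marginal, which is why the denominator is $p_{r-\ell+2}$. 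The three boundary cases follow in the same way:
\begin{itemize}
\item if $\ell=1$, replace the left factor by $d_j$ (the block then includes $X_0$ integrated out);
\item if $r=n$, drop the right factor;
\item if both hold, only the marginal $d_j$ remains.
\end{itemize}
The lower bound on the denominator is the choice of $m_0$, since $r-\ell+2\ge 2m_0\ge m_0$ gives $p_{r-\ell+2}\ge 1/2$.

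\textbf{Part \ref{item:bridge-uniform-bound}.} The numerators are bounded because every $p_t\le K$: the case $t=1$ is \cref{eq-ma-01}, and for larger $t$ we integrate $p_t(x,y)=\int p_{t-1}(x,w)p_1(w,y)\,\pi(dw)\le K$. The stated length conditions then make the denominator at least $1/2$, which yields the uniform bound.

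\textbf{Part \ref{item:bridge-forgetting}.} Write the bridge density as $1+e(z)$ and bound $\abs{e}$.
\begin{itemize}
\item \emph{Two-sided case ($\ell>1$, $r<n$).} Both factors are within $C\rho^m$ of $1$ and the denominator is within $C\rho^{2m}$ of $1$ and bounded below. Hence $\abs{e(z)}\le C\rho^m$.
\item \emph{Case $\ell=1$.} The factor $d_j$ contributes an error of order $\rho^j$. The denominator $d_{r+1}$ also enters, but since $r+1\ge j$ its error is of order $\rho^{r+1}\le\rho^j$.
\end{itemize}
Because $\int f\,d\pi=0$, we get $\E[f(X_j)\mid\mathcal F_{\ell,r}]=\int f e\,d\pi$, and the claimed bound follows.

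\textbf{Part \ref{item:bridge-realization}.} This is standard on standard Borel spaces. The block's conditional law given the boundary is a regular conditional kernel, because it is the explicit product-density kernel built from the $p_t$. A kernel on a standard Borel space can be realized as a measurable function of the boundary and one independent uniform variable (Kallenberg's transfer/randomization lemma). When $\ell=1$, we include $X_0$ in the block.

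The main obstacle is the measure-theoretic care in part \ref{item:bridge-density}. One must choose jointly measurable versions of $p_t$ so that Chapman--Kolmogorov holds pointwise rather than only a.e. One must also handle boundary values where $p_{r-\ell+2}(x,y)$ could vanish in short blocks; we restrict the formula to an almost-sure set for the boundary law. Uniform-in-$\nu$ control enters only through $d_j\le C$.
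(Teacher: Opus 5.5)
Your proposal is correct and follows essentially the same route as the paper. The bridge densities come from the Markov property and pointwise Chapman--Kolmogorov; the uniform bounds come from $p_t\le K$ and the choice $C\rho^{m_0}\le 1/2$, which forces the denominators $p_{r-\ell+2}$ and $d_{r+1}$ above $1/2$; the forgetting estimate comes from writing the density as $1+e(z)$ with $\sup_z\abs{e(z)}\le C(\rho^m+\one_{\{\ell=1\}}\rho^j)$ and integrating against the centered $f$; and part (iv) is the randomization lemma on standard Borel spaces. In your $\ell=1$ bullet you should also record two points, both immediate: the factor $p_{r-j+1}$ contributes an error $C\rho^{r-j+1}\le C\rho^m$, and $d_{r+1}\ge 1/2$ because $r+1\ge j+m\ge m_0$.
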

	
	\begin{lemma}\label[lemma]{lem:exchangeable-construction}
		Under \cref{ass:density-mixing}, for the resampling construction in \cref{subsec:statistical-decomposition},
		write $\ell_i=\min\mathcal B_i$ and $r_i=\max\mathcal B_i$, and let
		\[
		\mathcal L_i
		=\mathcal B_i
		\cup\{\ell_i-1:\ell_i>1\}
		\cup\{r_i+1:r_i<n\}.
		\]
		Let $(I_i)_{i=1}^n$ be the mutually independent $\operatorname{U}(0,1)$ variables of $\mathcal F$.
		\begin{enumerate}[label=(\roman*),ref=(\roman*)]
			\item For each $i$, conditional on
			$\mathcal F_{\ell_{i},r_{i}}$, the vectors
			$X_{\mathcal B_{i}}$ and $X_{\mathcal B_{i}}^{(i)}$ are independent
			and have the same conditional distribution. When $\ell_{i}=1$,
			the same property holds for the augmented vectors
			$(X_{0},X_{\mathcal B_{i}})$ and
			$(X_{0}^{(i)},X_{\mathcal B_{i}}^{(i)})$. Conditional on $\mathcal F$,
			the resampled segments are independent over $i$, and the segment
			generated using $I_{i}$ is measurable with respect to
			$\sigma(X_{\mathcal L_{i}},I_{i})$. Moreover,
			\[
			(X_{0}^{(i)},X_{1}^{(i)},\ldots,X_{n}^{(i)})
			\stackrel{d}{=}
			(X_{0},X_{1},\ldots,X_{n}).
			\]
			
			\item If $r_{i}<n$ and $s=r_{i}+1$, then
			$\mathcal F_{s}\vee\sigma(I_{i})$ and
			$\sigma(X_{s+1},X_{s+2},\ldots)$ are conditionally independent given
			$X_{s}$. Hence, for every $f\in L^1(\pi)$ and $j\ge s+1$,
			\[
			\E\big\{f(X_{j})\mid
			\mathcal F_{j-1}\vee\sigma(I_{i})\big\}
			=Pf(X_{j-1}).
			\]
		\end{enumerate}
	\end{lemma}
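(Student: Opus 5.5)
Part (i) is proved by a two-stage construction: first resample each block, then verify the distributional identity. Fix $i$ and write $\ell=\ell_i$, $r=r_i$. By \cref{lem:bridge-kernel}\ref{item:bridge-realization}, there is a measurable map $G_i$ with the following property. Given the boundary values $X_{\ell-1}$ (if $\ell>1$) and $X_{r+1}$ (if $r<n$), together with an independent uniform $I_i$, the output $G_i(X_{\mathcal L_i\setminus\mathcal B_i},I_i)$ has the conditional law of $X_{\mathcal B_i}$ given $\mathcal F_{\ell,r}$. When $\ell=1$, the output also includes $X_0$. We set $X^{(i)}_{\mathcal B_i}=G_i(\cdot,I_i)$ and $X^{(i)}_j=X_j$ off $\mathcal B_i$.

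The conditional law of the block given $\mathcal F_{\ell,r}$ depends only on the boundary variables, by the Markov property. Combined with the explicit bridge densities in \cref{lem:bridge-kernel}\ref{item:bridge-density}, this shows that $X^{(i)}_{\mathcal B_i}$ has the correct conditional law. Since $I_i$ is independent of $\mathcal F$, the resampled block is conditionally independent of $X_{\mathcal B_i}$ given $\mathcal F_{\ell,r}$. By construction it is $\sigma(X_{\mathcal L_i},I_i)$-measurable, and independence over $i$ given $\mathcal F$ follows from the mutual independence of the $I_i$.

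Equality in law of the full path is then immediate. For bounded test functions $g$ of $(X_0,\dots,X_n)$, condition on $\mathcal F_{\ell,r}$ (and on $X_0$ when $\ell>1$, which lies outside the block). The two vectors then share the same outside coordinates and have conditionally identically distributed blocks. Taking expectations gives $\E g(\mathbf X^{(i)})=\E g(\mathbf X)$. The same computation with $g$ a function of the pair shows that $(\mathbf X,\mathbf X^{(i)})$ is exchangeable given $I=i$, which is what the construction in \cref{subsec:statistical-decomposition} uses.

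Part (ii) reduces to a conditional independence statement. Let $s=r_i+1\le n$. The variable $I_i$ is independent of $\mathcal F$, and the resampled segment is a function of $X_{\mathcal L_i}$ and $I_i$, where $\mathcal L_i\subset\{0,\dots,s\}$. It therefore suffices to show that $\sigma(X_{s+1},\dots)$ is conditionally independent of $\mathcal F_s\vee\sigma(I_i)$ given $X_s$.

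For bounded $\psi$ of the future, $\varphi$ $\mathcal F_s$-measurable, and $\eta$ $\sigma(I_i)$-measurable, independence of $I_i$ from $\mathcal F$ gives
\[
\E[\psi\varphi\eta]=\E[\psi\varphi]\E\eta=\E[\E_{X_s}(\psi)\varphi]\E\eta=\E[\E_{X_s}(\psi)\varphi\eta],
\]
where the middle equality is the Markov property. A monotone-class argument extends this from products $\varphi\eta$ to all of $\mathcal F_s\vee\sigma(I_i)$. For $j\ge s+1$, apply the same argument with $s$ replaced by $j-1$: $\mathcal F_{j-1}\vee\sigma(I_i)$ and $X_j$ are conditionally independent given $X_{j-1}$. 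This yields $\E\{f(X_j)\mid\mathcal F_{j-1}\vee\sigma(I_i)\}=Pf(X_{j-1})$, first for bounded $f$. For $f\in L^1(\pi)$ we truncate and pass to the limit by monotone convergence. This is legitimate because $X_j$ has a density bounded by $C$ with respect to $\pi$, so $f(X_j)$ is integrable.

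The main obstacle is measure-theoretic rather than computational. We must check that the measurable version of the bridge kernel depends only on $X_{\mathcal L_i\setminus\mathcal B_i}$ and produces the correct conditional law uniformly over the edge cases $\ell=1$ and $r=n$. In the case $\ell=1$, $X_0$ is regenerated, and we must verify that conditioning on $\mathcal F_{1,r}$ (which excludes $X_0$) is consistent with the density $d_j$. I would handle this by a case-by-case comparison with \cref{lem:bridge-kernel}\ref{item:bridge-density}.
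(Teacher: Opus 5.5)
Your proposal is correct and follows essentially the same route as the paper. Each block is realized from its bridge law via \cref{lem:bridge-kernel}\ref{item:bridge-realization} with a seed $I_i$ independent of $\mathcal F$, and averaging the conditional identity gives equality in law of the full path. For part (ii), the independence of $I_i$ from $\mathcal F$ is combined with the Markov property at $X_s$; your version simply supplies the monotone-class and truncation details that the paper leaves implicit.

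One small correction: the appeal to the densities in \cref{lem:bridge-kernel}\ref{item:bridge-density} is unnecessary, since they only describe one-dimensional marginals. The joint conditional law of the block, including $X_0$ when $\ell=1$, comes directly from part \ref{item:bridge-realization} together with the Markov property.
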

	
	The next lemma controls the conditional projection sums arising in
	$J_1$ and $J_2$. Its proof is given in \cref{app:auxiliary-lemmas}.
	
	\begin{lemma}\label[lemma]{lem:projection-sum}
		Let $\mathcal G$ be a sub-$\sigma$-field of the underlying probability space,
		and let $Z_{1},\ldots,Z_{n}$ be square-integrable real-valued random variables.
		Suppose that there exist constants $C_{0}<\infty$, $\rho\in(0,1)$, and an
		integer $m\ge1$ such that, for all $1\le i,j\le n$,
		\[
		\abs{\cov(Z_{i},Z_{j})}
		\le
		C_{0}m^4
		\begin{cases}
			1, & \abs{i-j}\le 8m+4,\\[1mm]
			\rho^{\lfloor \abs{i-j}/2\rfloor},
			& \abs{i-j}>8m+4.
		\end{cases}
		\]
		Then there exists a constant $C<\infty$, depending only on $C_{0}$ and
		$\rho$, such that
		\[
		\Big\|
		\sum_{i=1}^n
		\big\{
		\E(Z_{i}\mid\mathcal G)-\E Z_{i}
		\big\}
		\Big\|_{2}
		\le
		C\sqrt{nm^5}.
		\]
		If $\E Z_{i}=0$ for every $i$, then the sum on the left-hand side reduces to $\sum_{i=1}^n \E(Z_{i}\mid\mathcal G)$.
	\end{lemma}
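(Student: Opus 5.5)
The plan is to expand the squared norm and bound it by a covariance sum. Write $V_i=\E(Z_i\mid\mathcal G)-\E Z_i$, so that
\[
\Big\|\sum_{i=1}^n V_i\Big\|_2^2=\sum_{i,j}\E(V_iV_j).
\]
The conditional expectation alone does not inherit the covariance decay: $\cov(\E(Z_i\mid\mathcal G),\E(Z_j\mid\mathcal G))$ is not controlled by $\cov(Z_i,Z_j)$ termwise. We therefore use the $L^2$ contraction of conditional expectation globally instead. Since $\E(\cdot\mid\mathcal G)$ is an orthogonal projection in $L^2$,
\[
\Big\|\sum_i V_i\Big\|_2
=\Big\|\E\Big(\sum_i (Z_i-\E Z_i)\,\Big|\,\mathcal G\Big)\Big\|_2
\le \Big\|\sum_i (Z_i-\E Z_i)\Big\|_2 .
\]
Here linearity of conditional expectation is used first, to pull the sum inside, and then the contraction is applied to $\sum_i(Z_i-\E Z_i)$ as a whole.

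It then remains to bound $\operatorname{Var}\big(\sum_i Z_i\big)=\sum_{i,j}\cov(Z_i,Z_j)$ using the hypothesis. Split the pairs according to whether $|i-j|\le 8m+4$.

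\textbf{Near-diagonal pairs.} For fixed $i$ there are at most $16m+9$ indices $j$ with $|i-j|\le 8m+4$. Each such covariance is at most $C_0m^4$, so these pairs contribute at most $n(16m+9)C_0m^4\le CnC_0m^5$.

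\textbf{Far pairs.} For fixed $i$, summing $C_0m^4\rho^{\lfloor k/2\rfloor}$ over $k=|i-j|>8m+4$, counted twice for the two signs, gives at most
\[
2C_0m^4\sum_{k\ge0}\rho^{\lfloor k/2\rfloor}=\frac{4C_0m^4}{1-\rho}.
\]
Summed over $i$, this contributes at most $Cnm^4\le Cnm^5$.

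Combining the two ranges gives $\operatorname{Var}\big(\sum_i Z_i\big)\le C(C_0,\rho)\,nm^5$. Taking square roots yields the claimed bound $C\sqrt{nm^5}$.

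The only step needing care is the observation that the projection bound must be applied to the whole sum rather than termwise; a termwise approach would lose the covariance structure. With that done, everything else is counting. The final remark is immediate: if $\E Z_i=0$ for every $i$, the centering terms vanish and the sum reduces to $\sum_i\E(Z_i\mid\mathcal G)$.
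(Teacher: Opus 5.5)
Your proof is correct and is essentially the paper's argument. You use linearity and the $L^2$ contraction of $\E(\cdot\mid\mathcal G)$ to reduce to $\bigl\|\sum_i (Z_i-\E Z_i)\bigr\|_2^2\le\sum_{i,j}\abs{\cov(Z_i,Z_j)}$, then split into near-diagonal pairs contributing $O(nm^5)$ and far pairs contributing $O(nm^4)$; your explicit counts ($16m+9$ near neighbours and $\sum_{k\ge0}\rho^{\lfloor k/2\rfloor}=2/(1-\rho)$) supply the constants that the paper leaves implicit.
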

	
	The following lemma bounds $U_c$ and its change under one bridge
	replacement, which control $J_5$ and $J_4$, respectively.
	Its proof is given in
	\cref{app:corrected-remainder}.
	
	\begin{lemma}\label[lemma]{lem:corrected-remainder}
		Suppose $\Gamma>0$. Under $\Pnu$, for all sufficiently large $n$,
		\begin{equation}
			\E\abs{U_{c}}\le Cn^{-\alpha/2}.
			\label{eq-qc-02}
		\end{equation}
		Moreover,
		\begin{equation}
			\E\abs{U_{c}-U_{c}'}^2
			\le C\sigma_{\nu,n}^{-2}
			\bigg\{\frac{m^{2+1/\alpha}}{n^2}
			+\frac{m}{n^{1+\alpha}}\bigg\}.
			\label{eq-qh-03}
		\end{equation}
	\end{lemma}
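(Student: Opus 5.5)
The plan is to linearize the adaptive term $\frac{1}{\sqrt n}\sum_i q_i h(X_i)\Delta_{i-1}$, with $h=a-\bar a$, by substituting the expansion \cref{eq-ka-02} for $\Delta_{i-1}$. This isolates a deterministic first-order piece that cancels against $\kappa A_n$. Everything left over should be $O(n^{-\alpha/2})$ in $L^1$; this gives \cref{eq-qc-02}.

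\textbf{Step 1: moment bounds for $\Delta_k$.} Using the recursion, the bound \cref{eq-ma-06} on conditional fourth moments, and $\bar a>0$, I first show $\E\Delta_k^2\le C\epsilon_k$ uniformly in $\nu$ for $k$ beyond a fixed $k_0$. The term $\Delta_0$ (deterministic) is damped by $\prod(1-\bar a\epsilon_r)$. The additive term is handled by \cref{lem:markov-toolkit}(ii) with $c_i=\epsilon_i\prod_{r>i}(1-\bar a\epsilon_r)$, whose squares sum to $O(\epsilon_k)$. For the multiplicative part I use a Gronwall-type argument on $\E\Delta_k^2$. The cross terms $\E[h(X_k)\Delta_{k-1}^2]$ are controlled through the Poisson solution $v_h$, so that $h(X_k)=v_h(X_{k-1})-Pv_h(X_k)+\ldots$ becomes a martingale increment plus telescoping pieces. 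Note that \cref{ass:density-mixing} gives $\sup_x|P^th(x)|\le C\rho^t\|h\|_{L^1(\pi)}$, hence $v_h$ is bounded. A fourth-moment analogue $\E\Delta_k^4\le C\epsilon_k^2$ is obtained the same way and will be needed in Step 3.

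\textbf{Step 2: Poisson decomposition of the adaptive term.} Write $h(X_i)=\{v_h(X_{i-1})-Pv_h(X_{i-1})\}$-type martingale differences plus telescoping terms: $h(X_i)=\xi_i+v_h(X_{i-1})-v_h(X_i)$, where $\xi_i=v_h(X_i)+h(X_i)-Pv_h(X_{i-1})$ is an $\mathcal F_i$-martingale difference. The martingale part $\frac1{\sqrt n}\sum q_i\xi_i\Delta_{i-1}$ has $L^2$ norm at most $\big(n^{-1}\sum q_i^2\epsilon_i\big)^{1/2}=O(n^{-\alpha/2})$, since $q_i$ is bounded. In the telescoping part I perform summation by parts. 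This produces $\frac1{\sqrt n}\sum q_i v_h(X_i)(\Delta_i-\Delta_{i-1})$ plus boundary terms and terms with $q_{i+1}-q_i$. The boundary and $q$-difference terms are $O(n^{-1/2})$ in $L^1$.

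\textbf{Step 3: extracting the bias.} Substitute $\Delta_i-\Delta_{i-1}=-\epsilon_i\{\bar a\Delta_{i-1}+\delta(X_i)+h(X_i)\Delta_{i-1}\}$. The pieces containing $\Delta_{i-1}$ contribute $n^{-1/2}\sum\epsilon_i\,O(\epsilon_i^{1/2})=O(n^{1/2-3\alpha/2})\le O(n^{-\alpha/2})$, using Step 1 and Hölder. The key piece is $-\frac1{\sqrt n}\sum q_i\epsilon_i v_h(X_i)\delta(X_i)$. Its mean under stationarity is $-\frac{\kappa A_n}{\sqrt n}$ (up to the $i=n$ term). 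This is exactly what the $-\kappa A_n$ in $U_c$ cancels, after accounting for the sign in \cref{eq-pa-02}; the sign bookkeeping must be checked. Its fluctuation is $O(n^{-1/2}(\sum\epsilon_i^2)^{1/2})=O(n^{-1/2})$ by \cref{lem:markov-toolkit}(ii). The nonstationary mean error is $O(n^{-1/2}\sum\epsilon_i\rho^i)$ via the density bound $|d_i-1|\le C\rho^i$. Finally, the term $q_0\Delta_0/(\sqrt n\epsilon_0)$ and $\mu_{\nu,n}/\sqrt n$ are $O(n^{-1/2})$ by \cref{lem:oracle-variance}, and $\sigma_{\nu,n}$ is bounded below for large $n$ by the same lemma together with $\Gamma>0$.

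\textbf{Step 4: bridge perturbation \cref{eq-qh-03}.} Replacing the block $\mathcal B_I$ changes $\Delta_k$ only for $k\ge\ell_I$. The perturbation propagates as $\Delta_k-\Delta_k^{(I)}$, which contracts by the factor $\prod(1-\bar a\epsilon_r)$ once the blocks agree again. Inside the block its size is $O(m\epsilon_{\ell_I}\cdot\epsilon_{\ell_I}^{1/2})$; afterwards the effective memory length is $\epsilon_k^{-1}\sim k^{\alpha}$. Squaring, averaging over $I$, and applying the same decomposition to both the block-local terms and the propagated tail gives the two contributions $m^{2+1/\alpha}/n^2$ and $m/n^{1+\alpha}$. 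The first comes from indices $I\lesssim m^{1/\alpha}$, where the step size exceeds $1/m$. I expect this step, and especially bounding the propagated difference uniformly over nonstationary starts, to be the main obstacle. I would first try a coupling estimate $\E(\Delta_k-\Delta_k^{(I)})^2\le C m^2\epsilon_{\ell_I}^3\exp(-c\sum_{r=r_I}^k\epsilon_r)$.
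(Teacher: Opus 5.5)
Your Steps 1--3 follow the paper's route to \cref{eq-qc-02}: the Poisson solution $v_h$, the identity $h(X_i)=\xi_i+v_h(X_{i-1})-v_h(X_i)$, summation by parts, and cancellation of $\kappa A_n$ against the centered sum $\sum_i q_i\epsilon_i\{\delta(X_i)v_h(X_i)-\kappa\}$. There are two harmless slips. The increment must be $\xi_i=h(X_i)+v_h(X_i)-v_h(X_{i-1})$; with $-Pv_h(X_{i-1})$, as you wrote it, its conditional mean is $Ph(X_{i-1})\neq0$. Also, the $q_{i+1}-q_i$ terms are only $O(n^{-1/2}\sum_{i\le n}\epsilon_i^{3/2})$ in $L^1$, not $O(n^{-1/2})$, though this is still $O(n^{-\alpha/2})$. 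The gap is in Step 4, in two places.

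\textbf{The in-block perturbation.} It is not $O(m\epsilon_I^{3/2})$, and the coupling bound $\E(\Delta_k-\Delta_k^{(I)})^2\le Cm^2\epsilon_{\ell_I}^3\exp(-c\sum_{r=r_I}^k\epsilon_r)$ you intend to prove is false. You only account for the swapped multiplicative inputs $\epsilon_j a(X_j)\Delta_{j-1}$. The swap also changes the additive inputs, so $H_{r_I}=\Delta_{r_I}-\Delta^{(I)}_{r_I}$ contains $-\sum_{j\in\mathcal B_I}\epsilon_j\{\delta(X_j)-\delta(X_j^{(I)})\}$. Given the boundary, the two blocks are independent copies, so the second moment of this sum is twice an expected conditional variance, of order $m\epsilon_I^2$ when $\Gamma>0$. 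This exceeds $m^2\epsilon_I^3$ by the factor $(m\epsilon_I)^{-1}$, which is unbounded for $I\gg m^{1/\alpha}$. The multiplicative part, of size $m\epsilon_I^{3/2}=o(\sqrt m\,\epsilon_I)$ when $m\epsilon_I\to0$, cannot cancel it.

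The correct input, as in \cref{lem:block-boundary-perturbation}, is $\norm{H_{r_I}}_4\le C\sqrt m\,\epsilon_I$. This comes from the Poisson decomposition of $\delta$ plus Burkholder inside the block, with $\E\Delta_k^4\le C\epsilon_k^2$ handling the multiplicative part. A H\"older step through the shared but block-dependent state $X_{r_I+1}$ then gives $\E H_{r_I+1}^2\le Cm\epsilon_I^2$. This, not Step 3, is where the fourth moments are needed.

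The weaker bound still suffices, but only if the tail $\sum_{j\ge r_I+2}q_jh(X_j)H_{j-1}$ is expanded through $\xi_j$ and summation by parts. Its second moment is then $\lesssim\sum_j\E H_{j-1}^2+\E H_{r_I+1}^2\lesssim\epsilon_I^{-1}\cdot m\epsilon_I^2=m\epsilon_I$, matching the local part. A triangle inequality on the tail would give $O(m)$ per index, i.e.\ $m/n$ in place of $m/n^{1+\alpha}$.

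\textbf{The contraction after the block.} The perturbation does not contract by $\prod(1-\bar a\epsilon_r)$. After the block it evolves as $H_k=(1-\epsilon_ka(X_k))H_{k-1}$, and $\E(H_k^2\mid\mathcal F_{k-1})=H_{k-1}^2\{1-2\epsilon_kPa(X_{k-1})+O(\epsilon_k^2)\}$. Since $Pa$ may be negative, there is no one-step contraction. What is needed is the averaged bound $\E H_t^2\le C\exp(-c\sum_{k=s+1}^t\epsilon_k)\E H_s^2$.

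The paper obtains this in \cref{lem:homogeneous-propagation} from the Lyapunov weight $V_k=1-2\epsilon_k\chi$ with $\chi-P\chi=Pa-\bar a$. It also uses the fact that the post-block future is conditionally independent of the bridge seed given $X_{r_I+1}$, so the transition is still $P$ in the enlarged filtration. The Lyapunov inequality holds uniformly in $x$, so uniformity over $\nu$, which you flagged, is not the real obstacle.

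With these two corrections, your split at $K_m\asymp m^{1/\alpha}$ (at most $Cm^2$ per index below it, $C(m\epsilon_I+m^2\epsilon_I^2)$ above it) yields \cref{eq-qh-03} as in the paper.
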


\subsection{Proof of the main theorem}\label{subsec:main-theorem-proof}
	
	\begin{proof}[Proof of \cref{thm:corrected}]
		We first estimate $J_{1}$. Adding and subtracting
		$(2\lambda)^{-1}\E(\widehat D\Xi)$ in the definition of $J_{1}$ gives
		\begin{align}
			J_{1}
			\le{}&
			\frac{1}{2\lambda}
			\big\|
			\E(\widehat D\Xi\mid\mathbf X)
			-\E(\widehat D\Xi)
			\big\|_{2}
			+
			\big|
			1-\frac{1}{2\lambda}\E(\widehat D\Xi)
			\big|.
			\label{eq-j1-decomposition}
		\end{align}
		
		Conditional on $I=i$, put
		\[
		G_{i}=
		\big\{\sum_{j\in\mathcal A_{i}}
		w_{ij}\big(Y_{j}-Y_{j}^{(i)}\big)\big\}
		\big\{\sum_{k\in\mathcal B_{i}}
		\big(Y_{k}-Y_{k}^{(i)}\big)\big\}.
		\]
		Averaging over the uniform index $I$ yields
		\begin{equation}
			\E(\widehat D\Xi\mid\mathbf X)
			=
			\frac{1}{\sigma_{\nu,n}^2n^2}
			\sum_{i=1}^n\E(G_{i}\mid\mathbf X),
			\qquad
			\E(\widehat D\Xi)
			=
			\frac{1}{\sigma_{\nu,n}^2n^2}
			\sum_{i=1}^n\E G_{i}.
			\label{eq-j1-projection}
		\end{equation}
		Consequently,
		\begin{equation}
			\big\|
			\E(\widehat D\Xi\mid\mathbf X)-\E(\widehat D\Xi)
			\big\|_{2}
			=
			\frac{1}{\sigma_{\nu,n}^2n^2}
			\big\|
			\sum_{i=1}^n
			\big\{\E(G_{i}\mid\mathbf X)-\E G_{i}\big\}
			\big\|_{2}.
			\label{eq-j1-centered-projection}
		\end{equation}
		
		By construction, $G_i$ is measurable with respect to
		$\sigma(X_{\mathcal L_i},I_i)$. The uniform fourth-moment bounds,
		the boundedness of $w_{ij}$, and Minkowski's inequality give
		$\|G_i\|_2\le Cm^2$. Thus, when $\abs{i-j}\le8m+4$,
		\[
		\abs{\cov(G_i,G_j)}
		\le 2\|G_i\|_2\|G_j\|_2
		\le Cm^4.
		\]
		
		Suppose $i<j$ and $j-i>8m+4$. Conditional independence of
		$I_i$ and $I_j$ given $\mathcal F$ implies
		\[
		\cov(G_i,G_j)
		=\cov\big(
		\E(G_i\mid\mathcal F),
		\E(G_j\mid\mathcal F)
		\big).
		\]
		The two conditional expectations depend only on $X_{\mathcal L_i}$
		and $X_{\mathcal L_j}$, respectively. These sets are separated by at
		least $\lfloor(j-i)/2\rfloor$ time steps. Conditional Jensen's
		inequality and the uniform fourth-moment bounds give
		\[
		\E\big|\E(G_i\mid\mathcal F)\big|\le Cm^2,
		\qquad
		\int_S\E_x\big|\E(G_j\mid\mathcal F)\big|\,\pi(dx)\le Cm^2.
		\]
		Applying \cref{eq-pe-01} to these conditional expectations yields
		\begin{align*}
		\abs{\cov(G_i,G_j)}
		&\le C\rho^{\lfloor(j-i)/2\rfloor}
		\E\big|\E(G_i\mid\mathcal F)\big|
		\int_S\E_x\big|\E(G_j\mid\mathcal F)\big|\,\pi(dx)\\
		&\le Cm^4\rho^{\lfloor(j-i)/2\rfloor}.
		\end{align*}
		The case $j<i$ follows by symmetry. Combining this estimate with
		the bound for $\abs{i-j}\le8m+4$ gives
		\begin{equation}
		\abs{\cov(G_i,G_j)}
		\le Cm^4
		\begin{cases}
		1, & \abs{i-j}\le8m+4,\\
		\rho^{\lfloor\abs{i-j}/2\rfloor},
		& \abs{i-j}>8m+4.
		\end{cases}
		\label{eq-j1-covariance}
		\end{equation}
		Applying \cref{lem:projection-sum} with
		$\mathcal G=\sigma(\mathbf X)$ and $Z_{i}=G_{i}$ gives
		\begin{equation}
			\Big\|
			\sum_{i=1}^n
			\big\{\E(G_{i}\mid\mathbf X)-\E G_{i}\big\}
			\Big\|_{2}
			\le C\sqrt{nm^5}.
			\label{eq-j1-projection-bound}
		\end{equation}
		Using the lower bound for $\sigma_{\nu,n}$ in
		\cref{eq-j1-centered-projection}, we obtain
		\begin{equation}
			\big\|
			\E(\widehat D\Xi\mid\mathbf X)-\E(\widehat D\Xi)
			\big\|_{2}
			\le
			C\frac{m^{5/2}}{n^{3/2}}.
			\label{eq-j1-centered-bound}
		\end{equation}
		
		By exchangeability and the antisymmetry of $D$, then $\E(\widehat D\Xi)=\E(D\Xi)=2\E(DW)$. Using the regression identity in \cref{eq-pc-02}, $\E W^2=1$, and the
		$\mathbf X$-measurability of $W$, we have
		\[
		\E(\widehat D\Xi)
		=2\lambda\{1+\E(WR)\}.
		\]
		Therefore,
		\[
		\big|1-\frac{1}{2\lambda}\E(\widehat D\Xi)\big|
		=
		\mid \E(WR) \mid 
		\le \|W\|_{2}\|R\|_{2}.
		\]
		
		We next control the remainder in this identity. If $j\in\mathcal A_{i}$, then
		$j$ is at distance at least $m$ from every present boundary of
		$\mathcal B_{i}$. Since $\int_S\delta(x)\pi(dx)=0$, part~\ref{item:bridge-forgetting}
		of \cref{lem:bridge-kernel}, boundedness of $q_{j}$, and \cref{eq-pb-03} yield
		\begin{equation}
			\big|\E\big(Y_{j}^{(i)}\mid\mathcal F_{\ell_{i},r_{i}}\big)\big|
			\le C\big\{\rho^m+\one_{\{\ell_{i}=1\}}\rho^j\big\},
			\qquad j\in\mathcal A_{i}.
			\label{eq-pk-04}
		\end{equation}
		The extra term at the left endpoint satisfies
		\[
		\sum_{i:\ell_{i}=1}\sum_{j\in\mathcal A_{i}}w_{ij}\rho^j
		\le
		2\sum_{i=1}^{2m+1}\sum_{j=1}^{3m+1}\rho^j
		\le Cm,
		\]
		where the first inequality uses $w_{ij}\le2$, together with
		$\ell_{i}=1\Rightarrow i\le2m+1$ and
		$j\in\mathcal A_{i}\Rightarrow j\le i+m\le3m+1$.
		The total deterministic weight is
		$\sum_{i}\sum_{j\in\mathcal A_{i}}w_{ij}=Ln$. 
		
		Substituting these bounds into \cref{eq-pc-03}, and using $L=2m+1$,
		$\abs{\mu_{\nu,n}}\le C$, and the lower bound for $\sigma_{\nu,n}$, gives
		\begin{equation}
			\norm{R}_{2}\le C\big\{\sqrt n\,\rho^m+n^{-1/2}\big\}.
			\label{eq-pk-01}
		\end{equation}

		Since $\|W\|_{2}=1$, $\lambda=L/n$, $L=2m+1$, and
		\cref{eq-pk-01,eq-j1-centered-bound} hold, we conclude that
		\begin{align*}
			J_{1}
			&\le
			C\frac{n}{m}\frac{m^{5/2}}{n^{3/2}}
			+
			C\big\{\sqrt n\,\rho^m+n^{-1/2}\big\}\\
			&\le
			C\bigg\{
			\frac{m^{3/2}}{\sqrt n}
			+\sqrt n\,\rho^m
			+n^{-1/2}
			\bigg\}.
		\end{align*}
		
		We next estimate $J_{2}$. By \cref{eq-pb-10}, and conditional on $I=i$, the definitions of
		$D^*$ and $\Xi$ give
		\[
		D^*\Xi=\frac{1}{\sigma_{\nu,n}^2n}G_{i}^*, \quad G_{i}^*
		=
		\Big\{\sum_{j\in\mathcal A_{i}}
		w_{ij}\big(|Y_{j}|+|Y_{j}^{(i)}|\big)\Big\}
		\Big\{\sum_{k\in\mathcal B_{i}}
		\Big(Y_{k}-Y_{k}^{(i)}\big)\Big\}.
		\]
		The uniform averaging argument used in \cref{eq-j1-projection} gives
		\begin{equation}
			\E(\widehat D^*\Xi\mid\mathbf X)
			=
			\frac{1}{\sigma_{\nu,n}^2n^2}
			\sum_{i=1}^n\E(G_{i}^*\mid\mathbf X).
			\label{eq-j2-projection}
		\end{equation}
		
		Interchanging the original and copied bridges leaves the first factor in
		$G_{i}^*$ unchanged and changes the sign of the second factor. Hence $\E G_{i}^*=0$.
		
		The covariance bound in \cref{eq-j1-covariance} also holds with $G_{i}$
		replaced by $G_{i}^*$. Since $\E G_{i}^*=0$, \cref{lem:projection-sum} gives
		\begin{equation}
			\big\|
			\sum_{i=1}^n\E(G_{i}^*\mid\mathbf X)
			\big\|_{2}
			\le C\sqrt{nm^5}.
			\label{eq-j2-projection-bound}
		\end{equation}
		Combining \cref{eq-j2-projection} and
		\cref{eq-j2-projection-bound}, and using the lower bound for
		$\sigma_{\nu,n}$, we obtain
		\begin{equation}
			\big\|
			\E(\widehat D^*\Xi\mid\mathbf X)
			\big\|_{2}
			\le C\frac{m^{5/2}}{n^{3/2}}.
			\label{eq-j2-centered-bound}
		\end{equation}
		
		Therefore, using $\E|Z|\le\|Z\|_{2}$ and
		$\lambda=L/n\asymp m/n$,
		\begin{align*}
			J_{2}
			&\le
			\frac{1}{\lambda}
			\big\|
			\E(\widehat D^*\Xi\mid\mathbf X)
			\big\|_{2}
			\le
			C\frac{n}{m}\frac{m^{5/2}}{n^{3/2}}
			=
			C\frac{m^{3/2}}{\sqrt n}.
		\end{align*}
		
		For $J_{3}$, \cref{eq-pb-12,eq-pk-01} and Cauchy--Schwarz give
		\begin{align*}
			J_{3}=\E\abs{R}
			\le\norm{R}_{2}
			\le C\big\{\sqrt n\,\rho^m+n^{-1/2}\big\}.
		\end{align*}
		
		By the finite-path distribution identity in \cref{lem:exchangeable-construction}, the bridge-updated sum satisfies the same estimate, we can obtain
		\[
		\E\big|
		\sum_{j\in\mathcal A_i}w_{ij}q_j\delta(X_j^{(i)})
		\big|^2
		=
		\E\big|
		\sum_{j\in\mathcal A_i}w_{ij}q_j\delta(X_j)
		\big|^2
		\le Cm.
		\]
		Since $I$ is independent of the original path and each substitution path, combining \cref{eq-pb-06}, we have
		\[
		\begin{aligned}
			\E(D^2\mid I=i)
			&=
			\frac{1}{n\sigma_{\nu,n}^2}
			\E\Big|
			\sum_{j\in\mathcal A_i}w_{ij}q_j
			\{\delta(X_j^{(i)})-\delta(X_j)\}
			\Big|^2\\
			&\le
			\frac{2}{n\sigma_{\nu,n}^2}
			\bigg\{
			\E\Big|
			\sum_{j\in\mathcal A_i}w_{ij}q_j\delta(X_j^{(i)})
			\Big|^2
			+
			\E\Big|
			\sum_{j\in\mathcal A_i}w_{ij}q_j\delta(X_j)
			\Big|^2
			\bigg\}\\
			&\le \frac{Cm}{n\sigma_{\nu,n}^2}.
		\end{aligned}
		\]
		Averaging over the uniform random index $I$ gives
		\begin{equation}
			\E D^2
			=\frac1n\sum_{i=1}^n\E(D^2\mid I=i)
			\le \frac{Cm}{n\sigma_{\nu,n}^2}.
			\label{eq-qh-16}
		\end{equation}
		The projection inequality in \cref{eq-pb-11} and Cauchy--Schwarz give
		\begin{equation*}
			J_{4}
			\le\frac{3}{2\lambda}
			\big(\E D^2\big)^{1/2}
			\big\{\E\abs{U_{c}-U_{c}'}^2\big\}^{1/2}.
		\end{equation*}
		Using \cref{eq-qh-16,lem:corrected-remainder}, and the lower bound for $\sigma_{\nu,n}$, we obtain
		\begin{align*}
			J_{4}
			&\le C\frac{n}{m}\bigg(\frac{m}{n}\bigg)^{1/2}
			\bigg\{\frac{m^{2+1/\alpha}}{n^2}
			+\frac{m}{n^{1+\alpha}}\bigg\}^{1/2}\\
			&\le C\bigg\{\frac{m^{1/2+1/(2\alpha)}}{\sqrt n}
			+n^{-\alpha/2}\bigg\}
			\le C\bigg\{\frac{m^{3/2}}{\sqrt n}
			+n^{-\alpha/2}\bigg\}.
		\end{align*}
		
		Finally, the definition of $J_{5}$ and \cref{lem:corrected-remainder} give
		\begin{equation*}
			J_{5}=\E\abs{U_{c}}\le C_{\alpha} n^{-\alpha/2}.
		\end{equation*}
		
		Combining the five bounds gives
		\begin{equation}
			J_{1}+J_{2}+J_{3}+J_{4}+J_{5}
			\le C\bigg\{n^{-\alpha/2}+\frac{m^{3/2}}{\sqrt n}
			+\sqrt n\,\rho^m\bigg\}.
			\label{eq-mc-02}
		\end{equation}
		Since $\rho\in(0,1)$, choose
		$K>(1+\alpha)/\{2\log(1/\rho)\}$ and put
		$m_{n}=\lceil K\log n\rceil$. This choice is admissible for all sufficiently
		large $n$, and, because $\alpha<1$,
		\[
		\frac{m_{n}^{3/2}}{\sqrt n}=o\big(n^{-\alpha/2}\big),
		\qquad
		\sqrt n\,\rho^{m_{n}}\le Cn^{-\alpha/2}.
		\]
		Thus \cref{eq-pa-03,eq-mc-02} give
		\begin{equation*}
			\sup_{z\in\mathbb R}
			\Big|
			\mathbb P \big(
			\frac{\sqrt n(\bar\theta_{n}-\theta^*-\beta_{n})}{\sigma_{\nu,n}}
			\le z\big)
			-\Phi(z)
			\Big|
			\le C_{\alpha} n^{-\alpha/2}.
		\end{equation*}
		By \cref{lem:oracle-variance},
		$\sigma_{\nu,n}/\widetilde\sigma_{n}=1+O(n^{-1})$ uniformly in $\nu$. For
		$c>0$ near one, $\sup_{z}|\Phi(z/c)-\Phi(z)|\le C|c-1|$. Replacing
		$\sigma_{\nu,n}$ by $\widetilde\sigma_{n}$ therefore adds only $O(n^{-1})$ to the
		bound and proves \cref{eq-mc-03}.
	\end{proof}

\subsection{Proof of \cref{cor:uncorrected}}\label{subsec:corollary-proofs}
	
	The corollary follows by comparing the corrected statistic with its
	deterministic Gaussian shift.
	
	Using $\beta_n=\kappa A_n/n$, we have
		\[
		\frac{\sqrt n(\bar\theta_n-\theta^*)}{\widetilde\sigma_n}
		=
		\frac{\sqrt n(\bar\theta_n-\theta^*-\beta_n)}
		{\widetilde\sigma_n}
		+
		\frac{\kappa A_n}{\widetilde\sigma_n\sqrt n}.
		\]
		Hence, by \cref{eq-mc-03},
		\begin{equation}
			\sup_{z\in\mathbb R}
			\Big|
			\mathbb P\big(
			\frac{\sqrt n(\bar\theta_n-\theta^*)}{\widetilde\sigma_n}
			\le z
			\big)
			-
			\Phi\big(
			z-\frac{\kappa A_n}{\widetilde\sigma_n\sqrt n}
			\big)
			\Big|
			\le Cn^{-\alpha/2}.
			\label{eq-pd-02}
		\end{equation}
		
		Put Put $\mu_n=\frac{\sqrt n\,\beta_n}{\widetilde\sigma_n}$, by \cref{lem:An-asymptotic,lem:oracle-variance}, it follow that
		\begin{equation}
			\mu_n=\frac{\kappa A_n}{\widetilde\sigma_n\sqrt n}
			=
			\frac{\kappa\epsilon_0}
			{\bar a(1-\alpha)\sigma_\infty}
			n^{1/2-\alpha}
			+
			o\bigl(n^{1/2-\alpha}\bigr).
			\label{eq-pd-03}
		\end{equation}
		
		The mean value theorem and
		$\sup_{z\in\mathbb R}\phi(z)=1/\sqrt{2\pi}$ give
		\[
		\sup_{z\in\mathbb R}
		\big|\Phi(z-\mu_n)-\Phi(z)\big|
		\le
		\frac{\abs{\mu_n}}{\sqrt{2\pi}}.
		\]
		Combining this estimate with \cref{eq-pd-02} and \cref{eq-pd-03}, we obtain
		\[
		\sup_{z\in\mathbb R}
		\Bigg|
		\mathbb P\Bigl(
		\frac{\sqrt n(\bar\theta_n-\theta^*)}
		{\widetilde\sigma_n}
		\le z
		\Bigr)
		-\Phi(z)
		\Bigg|
		\le Cn^{1/2-\alpha}.
		\]
		This proves \cref{eq-mc-04}.
		
		If $\kappa=0$, then $\beta_n=0$, and \cref{eq-pd-02} directly gives
		\[
		\sup_{z\in\mathbb R}
		\Big|
		\mathbb P\Big(
		\frac{\sqrt n(\bar\theta_n-\theta^*)}{\widetilde\sigma_n}
		\le z
		\Big)-\Phi(z)
		\Big|
		\le Cn^{-\alpha/2}.
		\]
		
		Now suppose that $\kappa\ne0$. The distance between two unit-variance
		normal distribution functions with a mean shift
		$\kappa A_n/(\widetilde\sigma_n\sqrt n)$ satisfies
		\begin{equation}
			\sup_{z\in\mathbb R}
			\Big|
				\Phi\Big(
				z-\frac{\kappa A_n}{\widetilde\sigma_n\sqrt n}
				\Big)-\Phi(z)\Big|
			=
			2\Phi\Big(
			\frac12
			\big|
			\frac{\kappa A_n}{\widetilde\sigma_n\sqrt n}
			\big|
			\Big)-1.
			\label{eq-pd-04}
		\end{equation}
		When $\mu_n>0$, the function $\Phi(z)-\Phi(z-\mu_n)$ is positive, and its derivative $\phi(z)-\phi(z-\mu_n)$ changes sign only at $z=\mu_n/2$. Hence its maximum is attained at $z=\mu_n/2$. The case $u<0$ follows by symmetry.
		
		Since the shift in \cref{eq-pd-03} converges to zero, Taylor expansion
		of $\Phi$ at zero gives
		\begin{equation}
			\begin{aligned}
				&\sup_{z\in\mathbb R}
				\Big|
				\Phi\Big(
				z-\frac{\kappa A_n}{\widetilde\sigma_n\sqrt n}
				\Big)-\Phi(z)\Big|
				=
				\frac{1}{\sqrt{2\pi}}
				\big|
				\frac{\kappa A_n}{\widetilde\sigma_n\sqrt n}
				\big|
				+
				O\Big(
				\big|
				\frac{\kappa A_n}{\widetilde\sigma_n\sqrt n}
				\big|^3
				\Big).
			\end{aligned}
			\label{eq-pd-05}
		\end{equation}
		
		By the reverse triangle inequality for the supremum norm and
		\cref{eq-pd-02},
		\begin{equation}
			\begin{aligned}
				\Bigg|
				&\sup_{z\in\mathbb R}
				\Big|
				\mathbb P\Big(
				\frac{\sqrt n(\bar\theta_n-\theta^*)}{\widetilde\sigma_n}
				\le z
				\Big)-\Phi(z)
				\Big|
				-
				\sup_{z\in\mathbb R}
				\Big|
					\Phi\Big(
					z-\frac{\kappa A_n}{\widetilde\sigma_n\sqrt n}
					\Big)-\Phi(z)\Big|
				\Bigg|
				\le Cn^{-\alpha/2}.
			\end{aligned}
			\label{eq-pd-06}
		\end{equation}
		
		Finally, \cref{eq-pd-03} implies
		\[
		\big|
		\frac{\kappa A_n}{\widetilde\sigma_n\sqrt n}
		\big|
		=
		\frac{\abs{\kappa}\epsilon_0}
		{\bar a(1-\alpha)\sigma_\infty}
		n^{1/2-\alpha}
		+
		o\bigl(n^{1/2-\alpha}\bigr).
		\]
		Moreover,
		\[
		\big|
		\frac{\kappa A_n}{\widetilde\sigma_n\sqrt n}
		\big|^3
		=o\bigl(n^{1/2-\alpha}\bigr),
		\qquad
		n^{-\alpha/2}
		=o\bigl(n^{1/2-\alpha}\bigr),
		\]
		where the second relation follows from $\alpha<1$. Combining
		\cref{eq-pd-05} and \cref{eq-pd-06} therefore gives
		\[
		\sup_{z\in\mathbb R}
		\Big|
		\mathbb P\Big(
		\frac{\sqrt n(\bar\theta_n-\theta^*)}{\widetilde\sigma_n}
		\le z
		\Big)-\Phi(z)
		\Big|
		=
		\frac{\abs{\kappa}\epsilon_0}
		{\sqrt{2\pi}\,\bar a(1-\alpha)\sigma_\infty}
		n^{1/2-\alpha}
		+
		o\bigl(n^{1/2-\alpha}\bigr).
		\]
		This proves \cref{eq-mc-05}.

	\begin{acks}
		This work was supported by the National Natural Science Foundation of China.
	\end{acks}
	
	\medskip\noindent
	\textbf{Conflict of interest.} The authors declare that they have no conflict of interest.
	
	\bibliographystyle{imsart-nameyear}
    \bibliography{refs.bib}
	
	\clearpage
	\appendix
	\raggedbottom
	
	\section{Proof of Preliminary lemmas}

	\subsection{Proof of \cref{lem:markov-toolkit,lem:An-asymptotic}}\label[appendix]{app:common-tools}
	We will present two auxiliary lemmas. The \cref{lem:geometric-consequence} provides an explanation for the geometric mixing property of Markov data, \cref{lem:PR-weights} contains some bounded conditions that will be utilized in the proof.
	\begin{lemma}\label[lemma]{lem:geometric-consequence}
		Under \cref{ass:density-mixing}, there exist constants $C<\infty$ and $\rho\in(0,1)$ such that
		\begin{equation}
			\abs{P^t(x,A)-\pi(A)}
			\le C\rho^t\pi(A),
			\qquad t\ge1,\quad x\in S,\quad A\in\mathcal S.
			\label{eq-ma-03}
		\end{equation}
		In particular, $\sum_{t\ge1}\rho^t<\infty$.
	\end{lemma}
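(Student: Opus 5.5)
The plan is to derive \eqref{eq-ma-03} from the one-step domination \eqref{eq-ma-01} by a Doeblin-type minorization in the reverse direction, and then to use the density form of the domination to pass from total-variation convergence to a bound relative to $\pi(A)$.

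\emph{Step 1: an upper bound on the transition density.} By \eqref{eq-ma-01}, $P(x,\cdot)\ll\pi$ with density $p(x,\cdot)\le K$ $\pi$-a.e. Since $P^{t}=P^{t-1}P$, the density of $P^t(x,\cdot)$ is
\[
p_t(x,y)=\int_S P^{t-1}(x,dz)\,p(z,y),
\]
so $p_t\le K$ for every $t\ge1$.

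\emph{Step 2: geometric ergodicity in total variation, uniformly in $x$.} Positive Harris recurrence and aperiodicity give $\norm{P^t(x,\cdot)-\pi}_{TV}\to0$ for every $x$, but not uniformly. To obtain uniformity I would use the domination together with stationarity. For $f\ge0$,
\[
P^t f(x)=\int p(x,z)\,P^{t-1}f(z)\,\pi(dz)\le K\,\pi(P^{t-1}f)=K\pi(f).
\]
Hence, if $g$ is bounded and $\pi$-centered, then $\abs{P^{t}g(x)}\le K\int\abs{P^{t-1}g}\,d\pi$. This reduces uniform-in-$x$ control to $L^1(\pi)$ control.

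Under $\pi$, the chain is stationary. Harris ergodicity gives
\[
\int\norm{P^t(x,\cdot)-\pi}_{TV}\,\pi(dx)\to0
\]
by dominated convergence. Choose $t_0$ with $\sup_x\norm{P^{t_0+1}(x,\cdot)-\pi}_{TV}\le K\int\norm{P^{t_0}(x,\cdot)-\pi}_{TV}\,d\pi<1/2$. The TV contraction is then uniform at time $t_0+1$, and submultiplicativity of the Dobrushin coefficient yields
\[
\sup_x\norm{P^t(x,\cdot)-\pi}_{TV}\le C\rho^t \quad\text{for all } t\ge1.
\]

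\emph{Step 3: upgrading to the relative bound.} Write
\[
P^{t}(x,A)-\pi(A)=\int\bigl(P^{t-1}(x,dz)-\pi(dz)\bigr)\,P(z,A),
\]
using $\pi P=\pi$. Since $0\le P(z,A)\le K\pi(A)$, the integrand is a function bounded by $K\pi(A)$. Therefore
\[
\abs{P^t(x,A)-\pi(A)}\le 2K\pi(A)\,\norm{P^{t-1}(x,\cdot)-\pi}_{TV}\le C\rho^{t}\pi(A)
\]
for $t\ge2$, after adjusting $C$. For $t=1$ the bound holds trivially, since $\abs{P(x,A)-\pi(A)}\le(K+1)\pi(A)$; enlarging $C$ covers this case. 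The last claim, $\sum_{t\ge1}\rho^t<\infty$, is immediate from $\rho<1$.

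\emph{Main obstacle.} The hard step is Step 2, getting uniformity in $x$ from Harris recurrence, which by itself only gives pointwise convergence. The key trick is the smoothing inequality $\abs{P^{t}g(x)}\le K\norm{P^{t-1}g}_{L^1(\pi)}$. I would check carefully that the $L^1(\pi)$ convergence of $\norm{P^t(x,\cdot)-\pi}_{TV}$ holds, which uses only pointwise convergence and boundedness by $2$. I would also check that the resulting uniform bound below $1/2$ indeed gives geometric decay via the Dobrushin coefficient.
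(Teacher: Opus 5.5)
Your proof is correct, but it takes a different route from the paper.

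\textbf{The paper's route.} It works spectrally. The bounded density $p_1\le K$ makes $P$ Hilbert--Schmidt, hence compact, on $L^2(\pi)$. Mixing of the stationary chain is then invoked to say that $1$ is the only, simple, peripheral eigenvalue, which gives $\|P^m f\|_2\le C\rho^m\|f\|_2$ for centered $f$. Chapman--Kolmogorov and Cauchy--Schwarz lift this to the uniform density estimate $|p_{m+2}(x,y)-1|\le C\rho^m$. From there a Doeblin minorization $P^{t_0}\ge\frac12\pi$ and the splitting $P^{t_0}=\frac12\pi+\frac12(2P^{t_0}-\pi)$ give the relative bound.

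\textbf{Your route.} You use the Harris aperiodic ergodic theorem, needed only for $\pi$-a.e.\ $x$, plus dominated convergence to get $L^1(\pi)$ convergence in total variation. The domination $P(x,\cdot)\le K\pi$ then enters twice. First, it turns $L^1(\pi)$ smallness into a uniform bound at time $t_0+1$, after which Dobrushin submultiplicativity gives geometric decay. Second, via $\pi P=\pi$ and $0\le P(z,A)\le K\pi(A)$, it converts total-variation error into error relative to $\pi(A)$.

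\textbf{Comparison.} Your argument is more elementary. It avoids the spectral theory of a non-self-adjoint compact operator: you never need to exclude other eigenvalues on the unit circle or check that the eigenvalue $1$ carries no Jordan block. The paper's route yields the pointwise density bound and the lower bound $p_t\ge\frac12$ for large $t$ as by-products, and these are used later for the bridge densities. However, your conclusion is equivalent to $|p_t(x,\cdot)-1|\le C\rho^t$ holding $\pi$-a.e., so up to a choice of versions you obtain the same information. Your Step~1, the bound $p_t\le K$ for all $t$, is never actually used and can be dropped.
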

	
	\begin{proof}
		By \cref{eq-ma-01}, the transition kernel admits a density $p_1$
		with respect to $\pi$ satisfying $0\le p_1(x,y)\le K$. Hence
		\[
		\int_{S\times S}p_1(x,y)^2\,\pi(dx)\pi(dy)\le K^2,
		\]
		so $P$ is a compact operator on $L^2(\pi)$. Since the stationary chain is mixing by positive Harris recurrence and aperiodicity, $1$ is the only peripheral eigenvalue of $P$ and is simple. Therefore, for every centered $f\in L^2(\pi)$,
		\begin{equation}
			\norm{P^m f}_{2}
			\le C\rho^m\norm{f}_{2},
			\qquad m\ge0.
			\label{eq:geometric-operator-bound}
		\end{equation}
		
		For $\pi$-almost every $y$, invariance gives
		\[
		\int_S\bigl[p_1(x,y)-1\bigr]\pi(dx)=0,
		\qquad
		\norm{p_1(\cdot,y)-1}_{2}\le C.
		\]
		Using the Chapman--Kolmogorov identity, Cauchy--Schwarz, and
		\cref{eq:geometric-operator-bound},
		\begin{align*}
			\abs{p_{m+2}(x,y)-1}
			&=
			\abs{
				P\Bigl[P^m\bigl\{p_1(\cdot,y)-1\bigr\}\Bigr](x)
			} \\
			&\le
			C\big\| P^m\bigl\{p_1(\cdot,y)-1\bigr\}\big\|_{2}
			\le
			C\rho^m,
			\qquad m\ge0.
		\end{align*}
		
		Choose $t_0\ge2$ such that $C\rho^{t_0-2}\le1/2$. Then
		\[
		\frac12\le p_{t_0}(x,y)\le\frac32,
		\]
		and integration over $A$ gives
		\begin{equation}
			\frac12\pi(A)\le P^{t_0}(x,A)\le\frac32\pi(A).
			\label{eq-ma-02}
		\end{equation}
		
		Consequently,
		\[
		0\le2P^{t_0}(x,A)-\pi(A)\le2\pi(A),
		\qquad
		\int_S\bigl[2P^{t_0}(x,A)-\pi(A)\bigr]\pi(dx)=\pi(A).
		\]
		Since $P^{t_0}=\frac12\pi+\frac12\bigl(2P^{t_0}-\pi\bigr)$, then
		\[
		\abs{P^{rt_0}(x,A)-\pi(A)}
		\le 2^{-r}\pi(A),
		\qquad r\ge1.
		\]
		Writing $t=rt_0+s$ with $0\le s<t_0$, using
		\cref{eq-ma-01} for the remaining $s$ steps, gives
		\[
		\abs{P^t(x,A)-\pi(A)}
		\le C2^{-r}\pi(A)
		\le C\rho^t\pi(A),
		\qquad t\ge t_0,
		\]
		where $\rho=2^{-1/t_0}$. For $1\le t<t_0$, \cref{eq-ma-01} and
		$\rho^t\ge1/2$ give the same bound after enlarging $C$. Thus
		\begin{equation*}
			\abs{P^t(x,A)-\pi(A)}
			\le C\rho^t\pi(A),
			\qquad t\ge1,
		\end{equation*}
		which completes the proof.
	\end{proof}

	\begin{lemma}\label[lemma]{lem:PR-weights}
		There is a constant $C<\infty$, independent of $n$, such that
		\begin{equation}
			\max_{0\le i\le n}\abs{q_{i,n}}\le C,
			\qquad
			\abs{q_{i+1,n}-q_{i,n}}\le C\epsilon_i,
			\quad 1\le i<n.
			\label{eq-ph-01}
		\end{equation}
		Moreover,
		\begin{equation}
			\frac1n\sum_{i=1}^n
			\Big(q_{i,n}-\frac1{\bar a}\Big)^2
			\longrightarrow0.
			\label{eq-ph-02}
		\end{equation}
		More precisely, let $L_n=\lceil K n^\alpha\log(en)\rceil$, where $K$ is sufficiently large. If $k_0\to\infty$ and $k_0=o(n)$, then
		\begin{equation}
			\sup_{k_0\le i\le n-L_n}
			\abs{q_{i,n}-\frac1{\bar a}}
			\longrightarrow0.
			\label{eq-ph-04}
		\end{equation}
	\end{lemma}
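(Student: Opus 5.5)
The plan is to work directly with the representation \cref{eq-mb-01} and to derive a one-step backward recursion for $q_{i,n}$, which gives all three claims with elementary estimates on the products $\prod(1-\bar a\epsilon_k)$.

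Throughout, fix $k_1$ with $\bar a\epsilon_k\le 1/2$ for $k\ge k_1$. The factors with $k<k_1$ are finitely many, so their product is bounded in absolute value by a constant. For $k\ge k_1$ we use $0\le 1-\bar a\epsilon_k\le e^{-\bar a\epsilon_k}$. Hence, with $E_j=\sum_{k\le j}\epsilon_k$,
\[
\Big|\prod_{k=i+1}^j(1-\bar a\epsilon_k)\Big|\le C e^{-\bar a(E_j-E_i)} .
\]

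\emph{Step 1: uniform boundedness.} I split the sum over $j$ into the ranges $i\le j\le 2i$ and $j>2i$.
\begin{itemize}
\item For $i\le j\le 2i$, we have $E_j-E_i\ge (j-i)\epsilon_{2i}\ge c(j-i)\epsilon_i$. The geometric sum is then at most $C/\epsilon_i$, so this range contributes $O(1)$ to $q_{i,n}$.
\item For $j>2i$, we have $E_j-E_i\ge c\,j^{1-\alpha}$. The tail $\epsilon_i\sum_{j>2i}e^{-cj^{1-\alpha}}$ is bounded uniformly in $i$.
\end{itemize}
The case $i=0$ and the case $i=n$ (where $q_{n,n}=\epsilon_n$) are handled in the same way. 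This gives $\max_i|q_{i,n}|\le C$.

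\emph{Step 2: the recursion and the increment bound.} Splitting off the $j=i$ term in \cref{eq-mb-01} gives, for $1\le i<n$,
\[
q_{i,n}=\epsilon_i+\frac{\epsilon_i}{\epsilon_{i+1}}(1-\bar a\epsilon_{i+1})\,q_{i+1,n}
=\epsilon_i+\Big(\frac{\epsilon_i}{\epsilon_{i+1}}-\bar a\epsilon_i\Big)q_{i+1,n}.
\]
Since $\epsilon_i/\epsilon_{i+1}=1+O(1/i)$, and $1/i\le C\epsilon_i$ because $\alpha<1$, this yields
\[
|q_{i+1,n}-q_{i,n}|\le \epsilon_i+|q_{i+1,n}|\bigl(O(1/i)+\bar a\epsilon_i\bigr)\le C\epsilon_i,
\]
using Step 1.

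\emph{Step 3: convergence to $1/\bar a$.} Set $e_i=q_{i,n}-1/\bar a$. Substituting into the recursion of Step 2, the identity $\epsilon_i-\bar a\epsilon_i/\bar a=0$ cancels the leading terms, and we get
\[
e_i=\Big(\frac{\epsilon_i}{\epsilon_{i+1}}-\bar a\epsilon_i\Big)e_{i+1}+r_i,
\qquad
r_i=\frac{1}{\bar a}\Big(\frac{\epsilon_i}{\epsilon_{i+1}}-1\Big)=O(1/i).
\]
For $i\ge k_1$ the coefficient is at most $1-\bar a\epsilon_i+C/i\le 1-c\epsilon_i$. Iterating backward from the bounded terminal value $e_n=\epsilon_n-1/\bar a$ gives
\[
|e_i|\le C e^{-c(E_n-E_i)}+\sum_{j=i}^{n-1}e^{-c(E_j-E_i)}\frac{C}{j}.
\]

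\begin{itemize}
\item \emph{Terminal term.} If $n-i\ge L_n$, then $E_n-E_i\ge L_n\epsilon_n\ge K\epsilon_0\log(en)$. So the terminal term is at most $n^{-cK\epsilon_0}\to0$ once $K$ is large.
\item \emph{Accumulated term.} By the same splitting as in Step 1, this term is at most $C/(i\epsilon_i)=Ci^{\alpha-1}$. This tends to $0$ uniformly on $i\ge k_0\to\infty$.
\end{itemize}
Together these two bounds prove \cref{eq-ph-04}.

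Finally, for \cref{eq-ph-02} I split $\{1,\dots,n\}$ into three pieces.
\begin{itemize}
\item On $k_0\le i\le n-L_n$, with $k_0\to\infty$ and $k_0=o(n)$ (say $k_0=\lfloor\sqrt n\rfloor$), $e_i\to0$ uniformly by \cref{eq-ph-04}.
\item On the remaining indices, $|e_i|\le C$ by Step 1.
\item The number of remaining indices is $k_0+L_n=o(n)$, since $L_n\asymp n^\alpha\log n$ and $\alpha<1$.
\end{itemize}
Hence the average of $e_i^2$ tends to zero.

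The main obstacle is Step 3: keeping the contraction factor honest. One must absorb the $O(1/i)$ perturbation $\epsilon_i/\epsilon_{i+1}-1$ into $-\bar a\epsilon_i$, which uses $\alpha<1$. One must also make sure that the window $L_n$ is long enough for the terminal error $e_n\approx-1/\bar a$ to be forgotten. This is precisely why the uniform statement is restricted to $i\le n-L_n$.
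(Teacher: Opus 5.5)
Your proof is correct. Steps~1 and~2 and the final three-piece averaging for \eqref{eq-ph-02} are essentially the paper's argument: the split $j\le 2i$ versus $j>2i$, and the backward relation $q_{i,n}=\epsilon_i+\epsilon_i\epsilon_{i+1}^{-1}(1-\bar a\epsilon_{i+1})q_{i+1,n}$. The genuine difference is how you obtain \eqref{eq-ph-04}.

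The paper compares $q_{i,n}$ with the untruncated series $\epsilon_i\sum_{\ell\ge0}\prod_{s=1}^{\ell}(1-\bar a\epsilon_{i+s})$. It identifies the limit of this series as a Riemann sum for $\int_0^\infty e^{-\bar a u}\,du=1/\bar a$, treating $u=\ell\epsilon_i$ as continuous on bounded ranges, with the Step~1 tail bounds tacitly supplying domination. It then bounds the omitted part $j>n$ by $Cn^{\alpha}\exp\{-C^{-1}(n-i)n^{-\alpha}\}$, which is why it needs $K$ large to beat the crude $n^{\alpha}$ prefactor.

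You instead linearize the same backward recursion around its fixed point: $e_i=\gamma_ie_{i+1}+r_i$ with $0\le\gamma_i\le1-c\epsilon_i$ and $r_i=O(1/i)=o(\epsilon_i)$. So $1/\bar a$ is identified algebraically and no limiting argument is needed. This is shorter, and it shows that any fixed $K>0$ suffices. It is also quantitative: your bound $|e_i|\le Ce^{-c(E_n-E_i)}+Ci^{\alpha-1}$ holds for every $i\ge k_1$, not only for $i\le n-L_n$. Summing it directly gives, for instance, $\frac1n\sum_{i}(q_{i,n}-1/\bar a)^2=O(n^{\alpha-1})$. In \cref{lem:An-asymptotic} it gives $A_n-\bar a^{-1}\sum_{i<n}\epsilon_i=O(\log n)$ rather than $o(n^{1-\alpha})$.

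What the paper's route mainly buys is the interpretation of $q_{i,n}$ as a discretization of $\int_0^\infty e^{-\bar a u}\,du$.

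One bookkeeping point: the contraction $\gamma_i\le1-c\epsilon_i$ requires $C/i\le\frac{\bar a}{2}\epsilon_i$. So $k_1$ must be taken beyond the threshold where $\bar a\epsilon_k\le1/2$. This is harmless because $k_0\to\infty$.
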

	
	\begin{proof}
		For all sufficiently large $i$, $0<\bar a\epsilon_i\le1/2$. By
		\cref{eq-mb-01},
		\[
		\abs{q_{i,n}}
		\le
		\epsilon_i\sum_{\ell=0}^{\infty}
		\exp\Big(
		-\bar a\sum_{s=1}^{\ell}\epsilon_{i+s}
		\Big).
		\]
		For $\ell\le i$ and $\ell>i$, respectively,
		\[
		\sum_{s=1}^{\ell}\epsilon_{i+s}
		\ge 2^{-\alpha}\ell\epsilon_i,
		\qquad
		\sum_{s=1}^{\ell}\epsilon_{i+s}
		\ge
		C^{-1}\bigl\{(i+\ell)^{1-\alpha}-i^{1-\alpha}\bigr\}.
		\]
		Integral comparison therefore gives
		\[
		\sum_{\ell=0}^{i}
		\exp\Big(
		-\bar a\sum_{s=1}^{\ell}\epsilon_{i+s}
		\Big)
		+
		\sum_{\ell>i}
		\exp\Big(
		-C^{-1}\bigl\{(i+\ell)^{1-\alpha}-i^{1-\alpha}\bigr\}
		\Big)
		\le C\epsilon_i^{-1}.
		\]
		Hence $\abs{q_{i,n}}\le C$; the finitely many remaining indices are
		controlled directly by \cref{eq-mb-01}. The backward relation
		\[
		q_{i,n}
		=
		\epsilon_i+
		\epsilon_i\epsilon_{i+1}^{-1}
		(1-\bar a\epsilon_{i+1})q_{i+1,n}
		\]
		and $\epsilon_i/\epsilon_{i+1}-1\le C\epsilon_i$ then give
		\[
		\abs{q_{i+1,n}-q_{i,n}}\le C\epsilon_i.
		\]
		This proves \cref{eq-ph-01}.
		
		For $\ell\epsilon_i$ in a bounded range,
		\[
		\sum_{s=1}^{\ell}\epsilon_{i+s}
		=\ell\epsilon_i\{1+o(1)\},
		\qquad
		\sum_{s=1}^{\ell}\epsilon_{i+s}^{2}=o(1),
		\]
		therefore,
		\[
		\epsilon_i\sum_{\ell=0}^{\infty}
		\prod_{s=1}^{\ell}(1-\bar a\epsilon_{i+s})
		\longrightarrow
		\int_0^\infty e^{-\bar a u}\,du
		=\frac1{\bar a}.
		\]
		The tail omitted from the finite sum in \cref{eq-mb-01} satisfies
		\[
		Cn^\alpha
		\exp\bigl\{-C^{-1}(n-i)n^{-\alpha}\bigr\}
		\le
		Cn^\alpha(en)^{-K/C}
		=o(1),
		\qquad i\le n-L_n,
		\]
		when $K$ is sufficiently large. Thus \cref{eq-ph-04} holds.
		
		Finally, \cref{eq-ph-01} and \cref{eq-ph-04} give
		\[
		\frac1n\sum_{i=1}^n
		\Big(q_{i,n}-\frac1{\bar a}\Big)^2
		\le
		\frac{Ck_0}{n}
		+
		\sup_{k_0\le i\le n-L_n}
		\Big(q_{i,n}-\frac1{\bar a}\Big)^2
		+
		\frac{CL_n}{n}.
		\]
		Since $k_0=o(n)$ and $L_n=o(n)$, the right-hand side converges to zero,
		which proves \cref{eq-ph-02}.
	\end{proof}
	
	Now, we will prove \cref{lem:markov-toolkit}.
	\begin{proof}[Proof of \cref{lem:markov-toolkit}]
		Let $\psi$ be measurable with respect to
		$\sigma(X_{r+t},X_{r+t+1},\ldots)$. By the Markov property,
		\[
		\E[\psi\mid\mathcal F_r]
		=
		\int_S\E_x\psi\,P^t(X_r,dx),
		\qquad
		\E\psi
		=
		\int_S\E_x\psi\,(\nu P^{r+t})(dx).
		\]
		The density mixing assumption implies
		\begin{align*}
			\big|\E[\psi\mid\mathcal F_r]
				-\int_S\E_x\psi\,\pi(dx)\big|
			&\le
			C\rho^t\int_S\E_x\abs{\psi}\,\pi(dx),\\
			\big|\E\psi-\int_S\E_x\psi\,\pi(dx)\big|
			&\le
			C\rho^{r+t}\int_S\E_x\abs{\psi}\,\pi(dx).
		\end{align*}
		Therefore, since $r\ge0$,
		\[
		\abs{\E[\psi\mid\mathcal F_r]-\E\psi}
		\le
		C\rho^t\int_S\E_x\abs{\psi}\,\pi(dx).
		\]
		Consequently,
		\[
		\abs{\cov(\varphi,\psi)}
		=
		\Big|\E\bigl[
			\varphi\{\E[\psi\mid\mathcal F_r]-\E\psi\}
			\bigr]\Big|
		\le
		C\rho^t\E\abs{\varphi}
		\int_S\E_x\abs{\psi}\,\pi(dx),
		\]
		which proves \cref{eq-pe-01}.
		
		For the second assertion, the density mixing assumption gives, for
		$i\ge1$,
		\[
		\E f(X_i)^2\le C\int_S f(x)^2\pi(dx),
		\qquad
		\abs{\E f(X_i)}
		\le
		C\rho^i\int_S\abs{f(x)}\pi(dx).
		\]
		For $j>i$, apply part~\textup{(i)} with
		$\varphi=f(X_i)$ and $\psi=f(X_j)$. Then $\abs{\cov\bigl(f(X_i),f(X_j)\bigr)}
		\le C_f\rho^{j-i}$.
		
		Hence, using the covariance expansion,
		\begin{align*}
			\E\bigg(
			\sum_{i\ge1}c_i\bigl\{f(X_i)-\E f(X_i)\bigr\}
			\bigg)^2
			&\le
			C_f\sum_{i\ge1}c_i^2
			+
			2C_f\sum_{1\le i<j}\abs{c_ic_j}\rho^{j-i} \\
			&\le
			C_f\sum_{i\ge1}c_i^2,
		\end{align*}
		where we used the summability of $(\rho^j)_{j\ge1}$ and
		$\abs{c_ic_j}\le(c_i^2+c_j^2)/2$. Moreover,
		\[
		\Big|\E\sum_{i\ge1}c_i f(X_i)\Big|
		\le
		C_f\sum_{i\ge1}\abs{c_i}\rho^i.
		\]
		Combining the centered second-moment bound with the squared-mean bound
		proves \cref{eq-pe-02}.
	\end{proof}
	
	Next, we will prove \cref{lem:oracle-variance}.
	\begin{proof}[Proof of \cref{lem:oracle-variance}]
		Since $\delta$ is centered, the density estimate in
		\cref{eq-ma-03}, integrated against $\delta$, gives
		\[
		\abs{P^t\delta(x)}
		\le
		C\rho^t\int_S\abs{\delta(y)}\pi(dy),
		\qquad t\ge1.
		\]
		Consequently,
		\[
		\Big|\int_S\delta(x)P^t\delta(x)\,\pi(dx)\Big|
		\le C\rho^t,
		\qquad t\ge1.
		\]
		Since $\delta\in L^2(\pi)$, the terms on the right are summable. Using
		the definition of $\widetilde\sigma_n^2$ in \cref{eq-mb-05} and grouping
		the off-diagonal terms according to the lag gives
		\begin{equation}
			\widetilde\sigma_n^2
			=
			\frac1n\sum_{i=1}^n q_i^2
			\int_S\delta(x)^2\pi(dx)
			+
			\frac2n\sum_{t=1}^{n-1}
			\bigg(
			\int_S\delta(x)P^t\delta(x)\,\pi(dx)
			\bigg)
			\sum_{i=1}^{n-t}q_iq_{i+t}.
			\label{eq-pi-03}
		\end{equation}
		
		For every fixed $t\ge1$, \cref{eq-ph-01,eq-ph-02} imply
		\[
		\frac1n\sum_{i=1}^{n-t}q_iq_{i+t}
		\longrightarrow
		\frac1{\bar a^2}.
		\]
		Indeed, the uniform boundedness of $q_i$ in \cref{eq-ph-01} gives
		\[
		\abs{q_iq_{i+t}-\frac1{\bar a^2}}
		\le
		C\bigg(
		\abs{q_i-\frac1{\bar a}}
		+
		\abs{q_{i+t}-\frac1{\bar a}}
		\bigg),
		\]
		and the average of the right-hand side converges to zero by
		\cref{eq-ph-02} and the Cauchy--Schwarz inequality. Moreover,
		\[
		\big|\frac1n\sum_{i=1}^{n-t}q_iq_{i+t}\big|\le C,
		\qquad 1\le t<n.
		\]
		Thus the summands in the second term of \cref{eq-pi-03} are dominated by
		the summable sequence $C\rho^t$. Applying dominated convergence and using
		\cref{eq-mb-04} yields
		\[
		\widetilde\sigma_n^2
		\longrightarrow
		\frac1{\bar a^2}
		\bigg\{
		\int_S\delta(x)^2\pi(dx)
		+
		2\sum_{t=1}^{\infty}
		\int_S\delta(x)P^t\delta(x)\,\pi(dx)
		\bigg\}.
		\]
		This proves \cref{eq-pi-01}.
		
		Next, applying \cref{eq-pe-01} with $\varphi=1$, $r=0$, $t=i$, and
		$\psi=\delta(X_i)$ gives
		\[
		\Big|\int_S\delta(x)(\nu P^i)(dx)\Big|
		\le C\rho^i.
		\]
		Therefore, by the boundedness of $q_i$ in \cref{eq-ph-01} and the
		definition of $\mu_{\nu,n}$,
		\[
		\abs{\mu_{\nu,n}}
		\le
		\frac{C}{n}\sum_{i=1}^n\rho^i
		\le
		\frac{C}{n}
		\le C.
		\]
		
		The density estimate in \cref{eq-ma-03}, integrated first against $\nu$
		and then against $\delta^2$, gives
		\[
		\Big|
			\int_S\delta(x)^2(\nu P^i)(dx)
			-
			\int_S\delta(x)^2\pi(dx)\Big|
		\le C\rho^i.
		\]
		For $j>i$, the same estimate and the bound on $P^{j-i}\delta$ give
		\[
		\begin{aligned}
			&\Big|
				\int_S\delta(x)P^{j-i}\delta(x)(\nu P^i)(dx)
				-
				\int_S\delta(x)P^{j-i}\delta(x)\pi(dx)
			\Big| 
			\le
			C\rho^i
			\int_S\abs{\delta(x)P^{j-i}\delta(x)}\pi(dx)
			\le C\rho^j.
		\end{aligned}
		\]
		Using the definitions of $\sigma_{\nu,n}^2$ and
		$\widetilde\sigma_n^2$, together with \cref{eq-ph-01}, we obtain
		\begin{align*}
			n\abs{\sigma_{\nu,n}^2-\widetilde\sigma_n^2}
			&\le
			C\sum_{i=1}^n\rho^i
			+
			C\sum_{1\le i<j\le n}\rho^j
			+
			n^2\mu_{\nu,n}^2 \\
			&\le
			C\sum_{i\ge1}\rho^i
			+
			C\sum_{j\ge2}j\rho^j
			+
			C
			\le C.
		\end{align*}
		Thus $\abs{\sigma_{\nu,n}^2-\widetilde\sigma_n^2}\le Cn^{-1}$,
		which proves \cref{eq-pi-04}. Since $\widetilde\sigma_n^2\to\Gamma/\bar a^2>0$ and
		\cref{eq-pi-04} holds uniformly in $\nu$, there exists an integer
		$n_0\ge1$, independent of $\nu$, such that $\sigma_{\nu,n}^2\ge\frac{\Gamma}{4\bar a^2}$, $n\ge n_0$.
	\end{proof}
	
	Following the above, we will then proceed to prove \cref{lem:An-asymptotic}.
	\begin{proof}[Proof of \cref{lem:An-asymptotic}]
		Take $k_0=\lceil\log(en)\rceil$ and $L_n$ as in
		\cref{lem:PR-weights}. By \cref{eq-mb-07,eq-ph-01,eq-ph-04},
		for all sufficiently large $n$,
		\begin{equation}
		    \begin{aligned}
			\Big|A_n-\frac1{\bar a}\sum_{i=1}^{n-1}\epsilon_i\Big|
			&=
			\Big|\sum_{i=1}^{n-1}
				\epsilon_i\big(q_i-\frac1{\bar a}\big)\Big|\\
			&\le
			C\sum_{i<k_0}\epsilon_i
			+\sup_{k_0\le i\le n-L_n}
			\big|q_i-\frac1{\bar a}\big|
			\sum_{i=1}^{n-1}\epsilon_i
			+C L_n\epsilon_{n-L_n}.	
		\end{aligned}\label{eq-ph-15}
		\end{equation}
		Indeed, the first and last terms bound the initial and terminal
		index ranges, while \cref{eq-ph-04} controls the middle range.
		
		By \cref{eq-in-01} and the expression for $L_n$ in
		\cref{lem:PR-weights}, $L_n=o(n)$ and
		\[
		\sum_{i<k_0}\epsilon_i
		=O\bigl((\log(en))^{1-\alpha}\bigr),
		\qquad
		L_n\epsilon_{n-L_n}=O(\log(en)).
		\]
		Moreover, \cref{eq-ph-04} gives
		\[
		\sup_{k_0\le i\le n-L_n}
		\abs{q_i-\frac1{\bar a}}=o(1),
		\]
		and integral comparison yields
		\[
		\sum_{i=1}^{n-1}\epsilon_i
		=\frac{\epsilon_0}{1-\alpha}n^{1-\alpha}+O(1).
		\]
		Since $\log(en)=o(n^{1-\alpha})$, \cref{eq-ph-15} implies
		\[
		A_n-\frac1{\bar a}\sum_{i=1}^{n-1}\epsilon_i
		=o(n^{1-\alpha}).
		\]
		Consequently, by \cref{eq-mb-07},
		\[
		A_n\sim
		\frac{\epsilon_0}{\bar a(1-\alpha)}n^{1-\alpha},
		\qquad
		\beta_n=\frac{\kappa A_n}{n}
		=\frac{\kappa\epsilon_0}{\bar a(1-\alpha)}n^{-\alpha}
		+o(n^{-\alpha}).
		\]
	\end{proof}
	
	\subsection{Proofs of \cref{lem:bridge-kernel,lem:exchangeable-construction,lem:projection-sum}}\label[appendix]{app:auxiliary-lemmas}
	In this section, we will prove \cref{lem:bridge-kernel,lem:exchangeable-construction,lem:projection-sum}.
	\begin{proof}[Proof of \cref{lem:bridge-kernel}]
		For the compatible versions of $p_t$ fixed before
		\cref{lem:bridge-kernel}, the Chapman--Kolmogorov identities hold
		pointwise and
		\[
		\int_Sp_t(z,y)\pi(dz)=1,
		\qquad
		\abs{p_t(x,y)-1}\le C\rho^t.
		\]
		Choose $m_0$ so that $C\rho^{m_0}\le1/2$. Then
		\[
		p_t(x,y)\ge\frac12,
		\qquad t\ge m_0.
		\]
		The marginal densities
		$d_j(y)=\int_Sp_j(x,y)\nu(dx)$ therefore satisfy the bounds stated
		before \cref{lem:bridge-kernel}.
		
		The Markov property and disintegration give the conditional densities
		in part~\ref{item:bridge-density}. In the two-sided case, conditional on
		$X_{\ell-1}=x$ and $X_{r+1}=y$, the density of $X_j$ is
		\[
		z\longmapsto
		\frac{
			p_{j-\ell+1}(x,z)p_{r-j+1}(z,y)
		}{
			p_{r-\ell+2}(x,y)
		}.
		\]
		If only the right boundary is present, the density is
		\[
		z\longmapsto
		\frac{d_j(z)p_{r-j+1}(z,y)}{d_{r+1}(y)};
		\]
		if only the left boundary is present or neither boundary is present, it is,
		respectively,
		\[
		z\longmapsto p_{j-\ell+1}(x,z),
		\qquad
		z\longmapsto d_j(z).
		\]
		The denominator in the two-sided case is at least $1/2$ when
		$r-\ell+2\ge2m_0$, and the denominator in the right-boundary-only case
		is at least $1/2$ when $r+1\ge m_0$. The exceptional boundary values form
		a null set and may be assigned any fixed conditional law. This proves
		part~\ref{item:bridge-density}.
		
		Moreover, $0\le p_t(x,y)\le C$ and $0\le d_j(y)\le C$. Together with
		the preceding denominator bounds, these estimates give the uniform bounds
		in part~\ref{item:bridge-uniform-bound}.
		
		For part~\ref{item:bridge-forgetting}, if both boundaries are present and
		$j-\ell+1,r-j+1\ge m$, then
		\[
		\sup_z\bigg|
		\frac{
			p_{j-\ell+1}(x,z)p_{r-j+1}(z,y)
		}{
			p_{r-\ell+2}(x,y)
		}
		-1
		\bigg|
		\le C\rho^m.
		\]
		If only the left boundary is present,
		\[
		\sup_z\big|p_{j-\ell+1}(x,z)-1\big|\le C\rho^m.
		\]
		If only the right boundary is present,
		\[
		\sup_z\bigg|
		\frac{d_j(z)p_{r-j+1}(z,y)}{d_{r+1}(y)}-1
		\bigg|
		\le
		C\bigl(\rho^j+\rho^{r+1-j}\bigr).
		\]
		Finally,
		\[
		\sup_z\abs{d_j(z)-1}\le C\rho^j.
		\]
		Integrating these bounds against the centered function $f$ gives
		\[
		\big|\E\{f(X_j)\mid\mathcal F_{\ell,r}\}\big|
		\le
		C\bigl(\rho^m+\one_{\{\ell=1\}}\rho^j\bigr)
		\int_S\abs{f(x)}\pi(dx),
		\]
		and, when neither boundary is present,
		\[
		\abs{\E f(X_j)}
		\le
		C\rho^j\int_S\abs{f(x)}\pi(dx).
		\]
		This proves part~\ref{item:bridge-forgetting}.
		
		The conditional laws obtained above are measurable probability kernels in
		the boundary values. Since the finite product spaces are standard Borel,
		the randomization lemma gives a measurable realization using one 
        independent $\operatorname{Unif}(0,1)$ random variable. When $\ell=1$,
		the same argument applied to the conditional law of
		$(X_0,\ldots,X_r)$ gives the required realization including $X_0$.
		Thus part~\ref{item:bridge-realization} follows.
	\end{proof}
	
	\begin{proof}[Proof of \cref{lem:exchangeable-construction}]
		Apply part~\ref{item:bridge-realization} of \cref{lem:bridge-kernel} to
		each $\mathcal B_i$ with seed $I_i$. Conditional on $\mathcal F$, the
		seeds are independent and each resampled block has the same conditional
		law as the original block. The construction is measurable with respect to
		$\sigma(X_{\mathcal L_i},I_i)$, and averaging the conditional identity gives
		\[
		(X_0^{(i)},X_1^{(i)},\ldots,X_n^{(i)})
		\stackrel{d}{=}(X_0,X_1,\ldots,X_n).
		\]
		If $r_i<n$, let $s=r_i+1$. The bridge segment and $I_i$ are measurable
		with respect to the past together with $X_s$; the Markov property therefore
		makes the later future conditionally independent given $X_s$. Thus
		\[
		\E\{f(X_j)\mid\mathcal F_{j-1}\vee\sigma(I_i)\}=Pf(X_{j-1}),
		\qquad f\in L^1(\pi),\quad j\ge s+1.
		\]
		These two identities prove both assertions.
	\end{proof}
	
	\begin{proof}[Proof of \cref{lem:projection-sum}]
		Set $S_{Z}=\sum_{i=1}^n\{Z_{i}-\E Z_{i}\}$.
		By linearity and the $L^2$ contraction property of conditional expectation,
		\[
		\Big\|
		\sum_{i=1}^n\{\E(Z_{i}\mid\mathcal G)-\E Z_{i}\}
		\Big\|_{2}
		=\|\E(S_{Z}\mid\mathcal G)\|_{2}
		\le\|S_{Z}\|_{2}.
		\]
		The covariance assumption gives
		\[
		\|S_{Z}\|_{2}^2
		\le
		\sum_{i=1}^n\sum_{j=1}^n\abs{\cov(Z_{i},Z_{j})}.
		\]
		The pairs with $\abs{i-j}\le8m+4$ contribute at most $Cnm^5$, while
		\[
		\sum_{\abs{i-j}>8m+4}m^4\rho^{\lfloor\abs{i-j}/2\rfloor}
		\le Cnm^4\le Cnm^5.
		\]
		Thus $\|S_{Z}\|_{2}^2\le Cnm^5$. The last assertion follows when
		$\E Z_{i}=0$ for all $i$.
	\end{proof}
		
	\section{Proof of \cref{lem:corrected-remainder}}\label[appendix]{app:corrected-remainder}
	This appendix will prove \cref{lem:corrected-remainder}, as well as estimate the bias-corrected remainder term $U_c$ and its variation under local path replacement $U_c - U_c'$. Since $h(x)=a(x)-\bar a$, the first estimate in \cref{eq-qc-02} follows from
	\begin{equation*}
		\begin{aligned}
		\E\abs{U_c}
		&=
		\frac{1}{\sigma_{\nu,n}\sqrt n}
		\E\abs{
			\frac{q_0\Delta_0}{\epsilon_0}
			-\sum_{i=1}^n q_i h(X_i)\Delta_{i-1}
			-\kappa A_n-\mu_{\nu,n}}\\
		&\le
		\frac{1}{\sigma_{\nu,n}\sqrt n}
		\bigg\{
		\E\abs{\sum_{i=1}^nq_i\Delta_{i-1}\xi_i}
		+C\sum_{i=1}^{n-1}\epsilon_i^{3/2}
		+\bigg\{
		\E\bigg[
		\sum_{i=1}^{n-1}q_i\epsilon_i
		\big\{\delta(X_i)v_h(X_i)-\kappa\big\}
		\bigg]^2
		\bigg\}^{1/2}
		+C
		\bigg\}.
	\end{aligned}
	\end{equation*}
	
	For the second estimate, the deterministic terms in $U_c-U_c'$ cancel.
	Conditioning on the uniform index $I$ gives
	\begin{align*}
		\E\abs{U_c-U_c'}^2
		&=
		\frac{1}{n^2\sigma_{\nu,n}^2}
		\sum_{i=1}^{n}
		\E\bigg[
		\sum_{j=1}^{n}q_j
		\big\{-h(X_j)\Delta_{j-1}
		+h(X_j^{(i)})\Delta_{j-1}^{(i)}\big\}
		\bigg]^2.
	\end{align*}
	
	To estimate the above items, several auxiliary lemmas are presented below to establish the required moment bounds and local path replacement estimates.
	
	For a centered $f\in L^p(\pi)$, $p\ge1$, set $v_f(x)=\sum_{t=1}^{\infty}P^tf(x)$ and $g_f=f+v_f$. By \cref{eq-ma-03},
	$\abs{P^tf(x)}\le C\rho^t\int_S\abs{f(y)}\pi(dy)$, so the series for $v_f$ converges absolutely and uniformly and $v_f$ is bounded. Consequently $g_f\in L^p(\pi)$, and termwise application of $P$ gives
	\begin{equation}
		g_f-Pg_f=f,
		\qquad Pg_f=v_f.
		\label{eq-pg-01}
	\end{equation}
	Moreover,
	\begin{align}
		\norm{v_f}_{\infty}
		&\le C\bigg(\sum_{t=1}^{\infty}\rho^t\bigg)
		\int_S\abs{f(x)}\pi(dx),
		\label{eq-pg-02}\\
		\sup_{x\in S}P\abs{g_f}^p(x)
		&\le C_p\int_S\abs{f(x)}^p\pi(dx).
		\label{eq-pg-03}
	\end{align}
	The second estimate follows from $p_1(x,y)\le1+C\rho$, the inequality $\abs{f+v_f}^p\le2^{p-1}(\abs f^p+\abs{v_f}^p)$, and Jensen's inequality.

    First, establish the second-order and fourth-order moment bounds of the recursive error, which will be used for the subsequent estimation of weighted sums and local path replacements.
	\begin{lemma}\label[lemma]{lem:delta-moments}
		Under \cref{ass:density-mixing,ass:fourth-moments}, for the recursion in
		\cref{eq-ka-02}, there exists $C<\infty$ such that
		\begin{equation}
			\E\Delta_k^2\le C\epsilon_k,
			\qquad
			\E\Delta_k^4\le C\epsilon_k^2,
			\qquad k\ge1.
			\label{eq-qb-01}
		\end{equation}
	\end{lemma}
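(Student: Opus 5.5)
We will prove the two bounds in \cref{eq-qb-01} by a direct recursive moment analysis of $\Delta_k=(1-\bar a\epsilon_k)\Delta_{k-1}-\epsilon_k\delta(X_k)-\epsilon_k h(X_k)\Delta_{k-1}$, using the Poisson solutions $g_\delta,g_h$ from \cref{eq-pg-01} to turn the Markov noise into martingale differences plus telescoping corrections. The bounds $\E\Delta_k^2\le C\epsilon_k$ and $\E\Delta_k^4\le C\epsilon_k^2$ are the classical ``noise scale $\sqrt{\epsilon_k}$'' estimates. Throughout we use \cref{eq-ma-06,eq-pg-03}: conditionally on $\mathcal F_{k-1}$, $X_k$ has density at most $K$ with respect to $\pi$. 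Hence $\E[\abs{a(X_k)}^p+\abs{b(X_k)}^p+\abs{g(X_k)}^p\mid\mathcal F_{k-1}]\le C$ for $p\le4$, uniformly in $\nu$, and this handles non-stationary starts. The deterministic product $\prod_{r}(1-\bar a\epsilon_r)$ is estimated as in \cref{lem:PR-weights}.

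\textbf{Step 1: a priori growth control.} First we show $\sup_{k\le n}\E\Delta_k^4<\infty$ with a crude bound. Because $a$ is unbounded, $1-\epsilon_ka(X_k)$ can be large. Conditioning on $\mathcal F_{k-1}$ gives
\[
\E[\Delta_k^4\mid\mathcal F_{k-1}]\le(1+C\epsilon_k)\Delta_{k-1}^4-4\bar a\epsilon_k\Delta_{k-1}^4+\text{(cross terms)}.
\]
In the cross terms, $\E[h(X_k)\mid\mathcal F_{k-1}]=Ph(X_{k-1})$ is not zero, only bounded by $\norm{v_h}_\infty$-type constants. It therefore contributes at most $C\epsilon_k\Delta_{k-1}^4$. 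Terms of order $\epsilon_k^2$ and higher involve moments of $a$ of order at most $4$, which are controlled conditionally.

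\textbf{Step 2: Poisson correction of the linear-in-noise terms.} The main obstacle is that the first-order terms $-4\epsilon_k\Delta_{k-1}^3\{\delta(X_k)+h(X_k)\Delta_{k-1}\}$ have nonzero conditional mean, since $P\delta\ne0$ and $Ph\ne0$. So we cannot simply obtain the contraction $(1-c\epsilon_k)$. To handle this, write $f(X_k)=g_f(X_k)-Pg_f(X_{k-1})+\{Pg_f(X_{k-1})-Pg_f(X_k)\}$ for $f=\delta,h$. The first bracket is a martingale difference. The second telescopes after summation by parts against the slowly varying weights $\epsilon_k\Delta_{k-1}^3$ and $\epsilon_k\Delta_{k-1}^4$, whose increments are $O(\epsilon_k\cdot\epsilon_k\text{-scale})$.

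Concretely, we use the perturbed Lyapunov function
\[
V_k=\Delta_k^2+2\epsilon_{k+1}\Delta_k\{v_\delta(X_k)+v_h(X_k)\Delta_k\},
\]
and its fourth-power analogue. The cross-term is absorbed into the drift of $V$, since $v_\delta,v_h$ are bounded by \cref{eq-pg-02}. The result is
\[
\E V_k\le(1-c\epsilon_k)\E V_{k-1}+C\epsilon_k^2,
\qquad
\E V_k^{(4)}\le(1-c\epsilon_k)\E V_{k-1}^{(4)}+C\epsilon_k^2\E\Delta_{k-1}^2+C\epsilon_k^4.
\]
Since $\epsilon_k\to0$, for $k\ge k_0$ we have $V_k\asymp\Delta_k^2$ up to an error $C\epsilon_k\abs{\Delta_k}$, which is absorbed by Young's inequality. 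The finitely many $k<k_0$ are covered by Step 1.

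\textbf{Step 3: solve the recursions.} A sequence satisfying $u_k\le(1-c\epsilon_k)u_{k-1}+C\epsilon_k^{1+s}$ has $u_k\le C\epsilon_k^{s}$. This follows by comparing $\epsilon_{k-1}^s/\epsilon_k^s=1+O(k^{-1})=1+o(\epsilon_k)$, because $\alpha<1$. Applying this with $s=1$ gives the second-moment bound. Inserting $\E\Delta_{k-1}^2\le C\epsilon_{k-1}$ into the fourth-moment recursion and applying it with $s=2$ gives $\E\Delta_k^4\le C\epsilon_k^2$.

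The initial term $\Delta_0\prod(1-\bar a\epsilon_r)$ decays like $\exp(-ck^{1-\alpha})$, so it is $o(\epsilon_k^2)$. All constants depend only on $K,M_4,\bar a,\epsilon_0,\alpha,\theta_0$, and not on $\nu$.
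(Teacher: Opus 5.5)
\textbf{The perturbation in Step~2 has the wrong sign, and this breaks the key step as written.} Your overall plan is sound: Poisson-correct the first-order drift with a perturbed Lyapunov function, then solve $u_k\le(1-c\epsilon_k)u_{k-1}+C\epsilon_k^{1+s}$. But the correction must \emph{cancel} the non-centred drift, not be ``absorbed''.

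Write $F(x,z)=\delta(x)+h(x)z$ and $V(x,z)=v_\delta(x)+v_h(x)z$, so that $V(\cdot,z)-PV(\cdot,z)=PF(\cdot,z)$. Condition on $\mathcal F_{k-1}$ and replace $\Delta_k$ by $\Delta_{k-1}$ inside the correction, at cost $O(\epsilon_k^2(1+\Delta_{k-1}^2))$. Your choice $V_k=\Delta_k^2+2\epsilon_{k+1}\Delta_kV(X_k,\Delta_k)$ then gives, with $PF$, $V$, $PV$ evaluated at $(X_{k-1},\Delta_{k-1})$,
\[
\E(V_k\mid\mathcal F_{k-1})-(1-2\bar a\epsilon_k)V_{k-1}
=-2\epsilon_k\Delta_{k-1}\{PF+V-PV\}+O\bigl(\epsilon_k^2(1+\Delta_{k-1}^2)\bigr)
=-4\epsilon_k\Delta_{k-1}PF+O\bigl(\epsilon_k^2(1+\Delta_{k-1}^2)\bigr).
\]
So the correction doubles the bad drift $-2\epsilon_k\{P\delta(X_{k-1})\Delta_{k-1}+Ph(X_{k-1})\Delta_{k-1}^2\}$ instead of removing it. The $Ph\,\Delta_{k-1}^2$ part is of the same order as the contraction $2\bar a\epsilon_k\Delta_{k-1}^2$ and has no sign. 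The $P\delta\,\Delta_{k-1}$ part leaves an $O(\epsilon_k)$ forcing after Young's inequality, which only gives $\E\Delta_k^2=O(1)$. Neither of your claimed recursions follows.

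\textbf{The fix.} Take $V_k=\Delta_k^2-2\epsilon_{k+1}\Delta_kV(X_k,\Delta_k)$, with quartic analogue $\Delta_k^4-4\epsilon_{k+1}\Delta_k^3V(X_k,\Delta_k)$. The bracket then becomes $PF-(V-PV)=0$. All remaining terms come from $\epsilon_{k+1}\ne\epsilon_k$, from $\Delta_k-\Delta_{k-1}=-\epsilon_k\{a(X_k)\Delta_{k-1}+\delta(X_k)\}$ inside the correction, and from the higher-order expansion. Using conditional moments of $a,\delta$ up to order four, they are:
\begin{itemize}
\item $O(\epsilon_k^2(1+\Delta_{k-1}^2))$ in the quadratic case;
\item $O(\epsilon_k^2(\Delta_{k-1}^2+\abs{\Delta_{k-1}}^3+\Delta_{k-1}^4)+\epsilon_k^3\abs{\Delta_{k-1}}+\epsilon_k^4)$ in the quartic case.
\end{itemize}
With these, your two recursions and Step~3 go through.

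\textbf{Comparison with the paper.} With that fix, your route is a legitimate variant of the paper's. The paper perturbs the \emph{iterate} rather than the test function. Its $\widetilde\Delta_k=\Delta_k-\epsilon_kV(X_k,\Delta_{k-1})$ satisfies the exact linear recursion $\widetilde\Delta_k=(1-\bar a\epsilon_k)\widetilde\Delta_{k-1}-\epsilon_kM_k+R_k$. Here $M_k$ is a genuine martingale increment and $\norm{R_k}_q\le C\epsilon_k^2(1+\norm{\Delta_{k-2}}_q)$. Since $\widetilde\Delta_k^2=\Delta_k^2-2\epsilon_k\Delta_kV(X_k,\Delta_{k-1})+\epsilon_k^2V(X_k,\Delta_{k-1})^2$, this is essentially your corrected quadratic functional, which also confirms the minus sign.

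The paper's version uses one transformation for both moment orders, and the odd martingale term drops out cleanly in the quartic expansion. Its cost is the transfer inequalities between $\Delta_k$ and $\widetilde\Delta_k$ and a coupled induction. Yours avoids the transfer step, but needs a separate correction for each power and a direct estimate of the $\Delta_k$-versus-$\Delta_{k-1}$ errors inside the correction.

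Step~1 is adequate as you use it, since it is needed only for finitely many $k<k_0$. Iterating $(1+C\epsilon_k)$ would not give a bound uniform in $k$.
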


	\begin{proof}
		By \cref{eq-pg-01,eq-pg-02}, $g_h,g_\delta\in L^4(\pi)$ and
		$v_h=Pg_h$, $v_\delta=Pg_\delta$ are bounded. Also,
		\begin{equation}
			\abs{\epsilon_{k-1}-\epsilon_k}
			\le Ck^{-\alpha-1}\le C\epsilon_k^2.
			\label{eq-qa-08}
		\end{equation}
		More generally, for $p\in\{1,2\}$,
		\[
		\epsilon_{k-1}^{p}-\epsilon_k^{p}
		=O(k^{-\alpha p-1})
		=o\big(\epsilon_k\epsilon_{k-1}^{p}\big).
		\]
		The recursion \cref{eq-ka-02} and the deterministic initial value
		$\Delta_0$ allow the following Poisson decomposition.
		For $r\in\{h,\delta,g_{h},g_{\delta}\}$ and $q\in\{2,4\}$, domination gives
		\begin{equation}
			P|r|^q(x)\le K\int_S|r(y)|^q\pi(dy),
			\qquad x\in S.
			\label{eq-app-poisson-moment}
		\end{equation}
		Within this proof, define
		\begin{equation}
		\begin{aligned}
			M_{k}&=\{g_{h}(X_{k})-v_{h}(X_{k-1})\}\Delta_{k-1}
			+g_{\delta}(X_{k})-v_{\delta}(X_{k-1}),\\
			V(x,z)&=v_{h}(x)z+v_{\delta}(x).
		\end{aligned}
		\label{eq-app-poisson-martingale}
		\end{equation}
		Since $\Delta_{k-1}$ is $\mathcal F_{k-1}$-measurable and
		$v_{h}=Pg_{h}$, $v_{\delta}=Pg_{\delta}$,
		\begin{equation}
			\E(M_{k}\mid\mathcal F_{k-1})=0,
			\qquad
			\E\bigl(|M_{k}|^q\mid\mathcal F_{k-1}\bigr)
			\le C_{q}\{1+|\Delta_{k-1}|^q\},
			\quad q\in\{2,4\}.
			\label{eq-app-poisson-martingale-bound}
		\end{equation}
		The Poisson equations imply
		\begin{equation}
			h(X_{k})\Delta_{k-1}+\delta(X_{k})
			=M_{k}+V(X_{k-1},\Delta_{k-1})
			-V(X_{k},\Delta_{k-1}).
			\label{eq-app-poisson-decomposition}
		\end{equation}

		Set, for $k\ge1$,
		\[
			\widetilde\Delta_{k}
			=\Delta_{k}-\epsilon_{k}V(X_{k},\Delta_{k-1}).
		\]
		For $k\ge2$, substitution of \cref{eq-app-poisson-decomposition} into the
		original recursion gives
		\begin{equation}
			\widetilde\Delta_{k}
			=(1-\bar a\epsilon_{k})\widetilde\Delta_{k-1}-\epsilon_{k}M_{k}+R_{k},
			\label{eq-app-poisson-recursion}
		\end{equation}
		where
		\begin{align}
			R_{k}={}&\{(1-\bar a\epsilon_{k})\epsilon_{k-1}-\epsilon_{k}\}
			V(X_{k-1},\Delta_{k-2})\notag\\
			&-\epsilon_{k}v_{h}(X_{k-1})
			(\Delta_{k-1}-\Delta_{k-2}).
			\label{eq-app-poisson-remainder}
		\end{align}
		The stepsize bounds above imply $\epsilon_{k-1}/\epsilon_{k} \to 1$, then
		\[
			\big|(1-\bar a\epsilon_{k})\epsilon_{k-1}-\epsilon_{k}\big|
			\le C\epsilon_{k}^2.
		\]
		Moreover, the original recursion and \cref{eq-app-poisson-moment} give, for
		$q\in\{2,4\}$,
		\begin{equation}
			\|\Delta_{k-1}-\Delta_{k-2}\|_{q}
			\le C_{q}\epsilon_{k-1}\{1+\|\Delta_{k-2}\|_{q}\}.
			\label{eq-app-poisson-increment}
		\end{equation}
		Consequently, consecutive stepsizes are comparable and
		\begin{align}
			\|R_{k}\|_{q}&\le C_{q}\epsilon_{k}^2
			\{1+\|\Delta_{k-2}\|_{q}\},\notag\\
			\|\Delta_{k}-\widetilde\Delta_{k}\|_{q}&\le C_{q}\epsilon_{k}
			\{1+\|\Delta_{k-1}\|_{q}\},
			\qquad q\in\{2,4\}.
			\label{eq-app-poisson-transfer}
		\end{align}
		By \cref{eq-app-poisson-moment} and the assumption on $\Delta_{0}$,
		$\Delta_{1}$ and $\widetilde\Delta_{1}$ have finite fourth moments.
		
		For the second moment, put
		$B_{k}=(1-\bar a\epsilon_{k})\widetilde\Delta_{k-1}-\epsilon_{k}M_{k}$.
		By \cref{eq-app-poisson-martingale-bound}, for all sufficiently large $k$,
		\begin{equation}
			\E B_{k}^2
			\le(1-c_{1}\epsilon_{k})\E\widetilde\Delta_{k-1}^2
			+C\epsilon_{k}^2\{1+\E\Delta_{k-1}^2\},
			\label{eq-app-second-recursion}
		\end{equation}
		with some $c_{1}>0$. For fixed $\eta>0$,
		$|u+v|^2\le(1+\eta\epsilon_{k})u^2+C_{\eta}\epsilon_{k}^{-1}v^2$;
		therefore \cref{eq-app-poisson-transfer} yields
		\begin{equation}
			\E\widetilde\Delta_{k}^2
			\le(1-c_{2}\epsilon_{k})\E\widetilde\Delta_{k-1}^2
			+C\epsilon_{k}^2\{1+\E\Delta_{k-1}^2\}
			+C\epsilon_{k}^3\{1+\E\Delta_{k-2}^2\},
			\label{eq-app-second-recursion-total}
		\end{equation}
		for some $c_{2}>0$, after reducing $c_{1}$ if necessary.
		Also, \cref{eq-app-poisson-transfer} and
		$|u+v|^2\le(1+\eta)|u|^2+C_{\eta}|v|^2$ imply
		\begin{equation}
			\E\Delta_{k}^2
			\le(1+\eta)\E\widetilde\Delta_{k}^2
			+C_{\eta}\epsilon_{k}^2\{1+\E\Delta_{k-1}^2\}.
			\label{eq-app-second-transfer}
		\end{equation}
		Choose $A_{2}>1+\eta$. Since
		$\epsilon_{k-1}-\epsilon_{k}=o(\epsilon_{k}\epsilon_{k-1})$, the two inequalities above
		close the induction
		\[
			\E\widetilde\Delta_{k}^2\le K_{2}\epsilon_{k},
			\qquad
			\E\Delta_{k}^2\le A_{2}K_{2}\epsilon_{k}.
		\]
		Indeed, choose $K_{2}$ large enough to absorb the fixed $C\epsilon_{k}^2$ term in
		\cref{eq-app-second-recursion-total}, and then take $k$ large enough to
		absorb the terms proportional to $\epsilon_{k}^2\epsilon_{k-1}$ and
		$\epsilon_{k}^3\epsilon_{k-2}$. The transfer inequality closes the second target
		because $A_{2}>1+\eta$. Enlarging the constant handles the finitely many initial
		indices, so
		\begin{equation}
			\E\widetilde\Delta_{k}^2+\E\Delta_{k}^2\le C\epsilon_{k}.
			\label{eq-app-second-bound}
		\end{equation}

		For the fourth moment, conditional expansion gives
		\begin{align*}
			\E(B_{k}^4\mid\mathcal F_{k-1})
			={}&(1-\bar a\epsilon_{k})^4\widetilde\Delta_{k-1}^4
			+6(1-\bar a\epsilon_{k})^2\epsilon_{k}^2\widetilde\Delta_{k-1}^2
			\E(M_{k}^2\mid\mathcal F_{k-1})\\
			&-4(1-\bar a\epsilon_{k})\epsilon_{k}^3\widetilde\Delta_{k-1}
			\E(M_{k}^3\mid\mathcal F_{k-1})
			+\epsilon_{k}^4\E(M_{k}^4\mid\mathcal F_{k-1}).
		\end{align*}
		The term linear in $M_{k}$ is zero. By \cref{eq-app-poisson-martingale-bound},
		$\E(|M_{k}|^3\mid\mathcal F_{k-1})\le C(1+|\Delta_{k-1}|^3)$.
		Using \cref{eq-app-second-bound}, Cauchy--Schwarz, and H\"older's inequality,
		if
		\[
			\E\widetilde\Delta_{j}^4\le K_{4}\epsilon_{j}^2,
			\qquad
			\E\Delta_{j}^4\le A_{4}K_{4}\epsilon_{j}^2,
		\]
		then the preceding expansion gives
		\begin{equation}
			\E B_{k}^4
			\le(1-2\bar a\epsilon_{k})\E\widetilde\Delta_{k-1}^4
			+C\{\epsilon_{k}^3+(1+A_{4})K_{4}\epsilon_{k}^4\}.
			\label{eq-app-fourth-recursion}
		\end{equation}
		The $\epsilon_{k}^3$ term uses the already established second-moment bound,
		while the remaining terms are $O(K_{4}\epsilon_{k}^4)$ by comparability of
		consecutive stepsizes. Applying
		$|u+v|^4\le(1+\eta\epsilon_{k})|u|^4+C_{\eta}\epsilon_{k}^{-3}|v|^4$ to
		$\widetilde\Delta_{k}=B_{k}+R_{k}$ and using \cref{eq-app-poisson-transfer} gives
		\begin{equation}
			\E\widetilde\Delta_{k}^4
			\le(1-c_{3}\epsilon_{k})\E\widetilde\Delta_{k-1}^4
			+C\{\epsilon_{k}^3+(1+A_{4})K_{4}\epsilon_{k}^4\}.
			\label{eq-app-fourth-recursion-total}
		\end{equation}
		
		Also, $\epsilon_{k-1}^2-\epsilon_{k}^2
		=o(\epsilon_{k}\epsilon_{k-1}^2)$.
		Choose $K_{4}$ large enough to absorb the $C\epsilon_{k}^3$ term into the
		contraction term, and then take $k$ large enough to absorb the
		$O(K_{4}\epsilon_{k}^4)$ term.
		The transfer inequality and
		$|u+v|^4\le(1+\eta)|u|^4+C_{\eta}|v|^4$ give
		\[
			\E|\Delta_{k}|^4
			\le(1+\eta)\E|\widetilde\Delta_{k}|^4
			+C_{\eta}\epsilon_{k}^4\{1+\E|\Delta_{k-1}|^4\}.
		\]
		Taking $A_{4}>1+\eta$ closes the fourth-moment induction. Enlarging the
		constant over the finitely many initial indices proves
		\[
			\E|\widetilde\Delta_{k}|^4+\E|\Delta_{k}|^4\le C\epsilon_{k}^2.
		\]
		The asserted bounds for $\Delta_{k}$ follow.
	\end{proof}

	Apply the Poisson construction above with $p=4$ to $h$ and $\delta$, and write
	\begin{equation*}
		g_{h}-Pg_{h}=h,\qquad g_{\delta}-Pg_{\delta}=\delta,
		\qquad v_{h}=Pg_{h},\qquad v_{\delta}=Pg_{\delta}.
	\end{equation*}
	Then $g_{h},g_{\delta}\in L^4(\pi)$ and $v_{h},v_{\delta}$ are bounded. In particular,
	\begin{equation}
		\begin{aligned}
			\xi_{k}
			&=g_{h}(X_{k})-v_{h}(X_{k-1}),
			\qquad \E(\xi_{k}\mid\mathcal F_{k-1})=0,\\
			h(X_{k})
			&=\xi_{k}+v_{h}(X_{k-1})-v_{h}(X_{k}).
		\end{aligned}
		\label{eq-qa-01a}
	\end{equation}
	
	To control the influence of local substitution on subsequent iterations, the second-order moment decay of homogeneous error recursion and the corresponding summation estimation are presented below.
	\begin{lemma}\label[lemma]{lem:homogeneous-propagation}
		Let $1\le s<t$ be integers. Suppose $H_{s}\in L^2$ is measurable with
		respect to $\mathcal F_{s}\vee\sigma(U)$, where $U$ is a random seed such
		that $\mathcal F_{s}\vee\sigma(U)$ and
		$\sigma(X_{s+1},X_{s+2},\ldots)$ are
		conditionally independent given $X_{s}$. If
		\begin{equation}
			H_{k}=\big(1-\epsilon_{k}a(X_{k})\big)H_{k-1},
			\qquad
			s<k\le t,
			\label{eq-qd-01}
		\end{equation}
		then there are constants $c,C>0$, depending only on the fixed model
		parameters, such that
		\begin{equation}
			\E H_{t}^2
			\le C\exp\big(-c\sum_{k=s+1}^t\epsilon_{k}\big)\E H_{s}^2.
			\label{eq-qd-02}
		\end{equation}
		Moreover,
		\begin{align}
			\sum_{t=s}^{\infty}
			\exp\big(-c\sum_{k=s+1}^t\epsilon_{k}\big)
			&\le C\epsilon_{s}^{-1},
			\quad
			\sum_{t=s+1}^{\infty}\epsilon_{t}
			\exp\big(-c\sum_{k=s+1}^{t-1}\epsilon_{k}\big)
			\le C.
			\label{eq-qd-04}
		\end{align}
	\end{lemma}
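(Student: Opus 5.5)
The plan is to treat \cref{eq-qd-01} as the homogeneous version of the recursion \cref{eq-ka-02}, i.e.\ with $\delta\equiv0$, and to rerun the Poisson-corrected second-moment argument from the proof of \cref{lem:delta-moments}. The extra work is to track the exponential weight and to work with the enlarged filtration $\mathcal G_k=\mathcal F_k\vee\sigma(U)$, $k\ge s$.

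The first step is to check that the martingale structure survives the seed $U$. By the assumed conditional independence of $\mathcal F_s\vee\sigma(U)$ and the future given $X_s$, together with the Markov property, $\E\{f(X_k)\mid\mathcal G_{k-1}\}=Pf(X_{k-1})$ for all $k\ge s+1$ and $f\in L^1(\pi)$; this is the same argument as part (ii) of \cref{lem:exchangeable-construction}. Hence $\xi_k$ in \cref{eq-qa-01a} satisfies $\E(\xi_k\mid\mathcal G_{k-1})=0$ for $k\ge s+1$. By \cref{eq-pg-03}, \cref{eq-ma-06} and boundedness of $v_h$, we also get uniform conditional bounds $\E(\xi_k^2\mid\mathcal G_{k-1})\le C$ and $\E(a(X_k)^2\mid\mathcal G_{k-1})\le C$.

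Next I would introduce the corrected quantity $\widetilde H_k=\bigl(1-\epsilon_{k+1}v_h(X_k)\bigr)H_k$ for $k\ge s$. Writing $a=\bar a+\xi_k+v_h(X_{k-1})-v_h(X_k)$ in \cref{eq-qd-01}, the terms $-\epsilon_kv_h(X_{k-1})H_{k-1}$ and $H_{k-1}$ combine into $\widetilde H_{k-1}$. The leftover $v_h(X_k)$-terms are $v_h(X_k)\{(\epsilon_k-\epsilon_{k+1})+\epsilon_k\epsilon_{k+1}a(X_k)\}H_{k-1}$, which is $O(\epsilon_k^2)(1+|a(X_k)|)|H_{k-1}|$ by \cref{eq-qa-08}. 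This gives
\[
\widetilde H_k=\widetilde H_{k-1}-\bar a\epsilon_kH_{k-1}-\epsilon_k\xi_kH_{k-1}+R_k,\qquad |R_k|\le C\epsilon_k^2(1+|a(X_k)|)|H_{k-1}|.
\]
Because $v_h$ is bounded, for $k\ge k_*$ (a fixed index with $C\epsilon_{k_*}\le1/2$) we have $H_{k-1}=\widetilde H_{k-1}(1+O(\epsilon_k))$, and hence $\E H^2$ and $\E\widetilde H^2$ are comparable up to constants.

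Squaring and conditioning on $\mathcal G_{k-1}$, the cross term linear in $\xi_k$ vanishes. The $\xi_k^2$ term and the terms involving $R_k$ are $O(\epsilon_k^2)\widetilde H_{k-1}^2$, using the conditional moment bounds together with $|u+v|^2\le(1+\eta\epsilon_k)u^2+C\epsilon_k^{-1}v^2$. This yields
\[
\E\widetilde H_k^2\le(1-c\epsilon_k)\E\widetilde H_{k-1}^2,\qquad k\ge\max(s+1,k_*).
\]
For the finitely many indices $s<k<k_*$, I would use the crude bound $\E(H_k^2\mid\mathcal G_{k-1})\le(1+C\epsilon_k+C\epsilon_k^2)H_{k-1}^2\le CH_{k-1}^2$. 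Iterating, applying $1-x\le e^{-x}$, and transferring back from $\widetilde H_t$ to $H_t$ and from $\widetilde H_s$ to $H_s$ gives \cref{eq-qd-02}. The main obstacle is this step: $a$ is unbounded, so $(1-\epsilon_ka)^2$ cannot be bounded pathwise by $1-c\epsilon_k$. The Poisson correction, which moves $h$ into a martingale part plus a telescoping bounded part, is what rescues the contraction. The care needed is that every remainder is genuinely $O(\epsilon_k^2)$ times $H_{k-1}^2$ in conditional second moment, which is ensured by the fourth-moment domination \cref{eq-ma-06}.

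Finally, \cref{eq-qd-04} is deterministic, via integral comparison as in the proof of \cref{lem:PR-weights}. For the first sum, I would split $t-s\le s$, where $\sum_{k=s+1}^t\epsilon_k\ge2^{-\alpha}(t-s)\epsilon_s$ and the geometric sum is $\le C\epsilon_s^{-1}$, from $t-s>s$, where $\sum\epsilon_k\ge C^{-1}(t^{1-\alpha}-s^{1-\alpha})$ and the tail sum is $O(s^\alpha)=O(\epsilon_s^{-1})$. For the second sum, put $u_t=\sum_{k=s+1}^{t-1}\epsilon_k$, so that $\epsilon_t=u_{t+1}-u_t$. It is then a Riemann sum for $\int_0^\infty e^{-cu}\,du$, and since $\epsilon_t\le\epsilon_{s+1}$ is bounded it is at most $(1+c\epsilon_{s+1})c^{-1}\le C$.
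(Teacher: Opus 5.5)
Your argument is correct and is essentially the paper's. The paper's corrector $\chi$, which solves $\chi-P\chi=Pa-\bar a=Ph$, is exactly $v_h=\sum_{t\ge1}P^th$. So its Lyapunov functional $H_k^2V_k(X_k)=\bigl(1-2\epsilon_kv_h(X_k)\bigr)H_k^2$ agrees with your $\widetilde H_k^2$ up to a factor $1+O(\epsilon_k^2)$. The early indices and \eqref{eq-qd-04} are handled the same way as in your proposal. The only difference is bookkeeping. The paper gets the contraction in one step from the kernel inequality $P\{(1-\epsilon_ka)^2V_k\}(x)\le(1-c\epsilon_k)V_{k-1}(x)$, which needs only boundedness of $\chi$ and of $P(a^2)$. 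You instead expand through the martingale increments $\xi_k$ and the remainder $R_k$.
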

	
	\begin{proof}
		The function $Pa-\bar a$ is centered under $\pi$ because invariance gives
		$\int_S\{Pa(x)-\bar a\}\pi(dx)
		=\int_Sa(x)\pi(dx)-\bar a=0$.
		It is also bounded: by \cref{eq-ma-06},
		$\sup_{x\in S}\abs{Pa(x)}\le\sup_{x\in S}\{P(a^2)(x)\}^{1/2}\le C$.
		Apply the Poisson construction above to $Pa-\bar a$. There is a bounded function $\chi$ satisfying
		\begin{equation}
			\chi-P\chi=Pa-\bar a.
			\label{eq-qd-05}
		\end{equation}
		For all sufficiently large $k$, define $V_{k}(x)=1-2\epsilon_{k}\chi(x)$.
		Boundedness of $\chi$ implies
		\begin{equation}
			\frac{1}{2}\le V_{k}(x)\le\frac{3}{2},
			\qquad
			x\in S.
			\label{eq-qd-06}
		\end{equation}
		
		Expanding the integrand gives the one-step Lyapunov inequality
		\begin{align*}
			&P\big\{(1-\epsilon_{k}a)^2V_{k}\big\}(x)
			=1-2\epsilon_{k}\big\{Pa(x)+P\chi(x)\big\}
			+\epsilon_{k}^2P\big(a^2+4a\chi\big)(x)
			-2\epsilon_{k}^3P(a^2\chi)(x).
		\end{align*}
		The final two terms are $O(\epsilon_{k}^2)$ uniformly in $x$, by boundedness of $\chi$ and the uniform conditional second moment of $a$. \Cref{eq-qd-05} gives $Pa(x)+P\chi(x)=\bar a+\chi(x)$.
		Thus
		\begin{equation*}
			P\big\{(1-\epsilon_{k}a)^2V_{k}\big\}(x)
			=1-2\bar a\epsilon_{k}-2\epsilon_{k}\chi(x)+O(\epsilon_{k}^2).
		\end{equation*}
		On the other hand, \cref{eq-qa-08} gives
		\begin{align*}
			\big(1-\bar a\epsilon_{k}\big)V_{k-1}(x)
			&=1-\bar a\epsilon_{k}-2\epsilon_{k-1}\chi(x)
			+2\bar a\epsilon_{k}\epsilon_{k-1}\chi(x)
			=1-\bar a\epsilon_{k}-2\epsilon_{k}\chi(x)+O(\epsilon_{k}^2).
		\end{align*}
		Comparing the preceding two expansions, the difference between the right and left sides is $\bar a\epsilon_{k}+O(\epsilon_{k}^2)$. Therefore, for all sufficiently large $k$,
		\begin{equation}
			P\big\{(1-\epsilon_{k}a)^2V_{k}\big\}(x)
			\le\big(1-c\epsilon_{k}\big)V_{k-1}(x)
			\label{eq-qd-09}
		\end{equation}
		for some $c>0$.
		
		For $k\ge s$, put
		$\mathcal F_{k}^{\mathrm{enl}}=\mathcal F_{k}\vee\sigma(U)$.
		The conditional-independence assumption and the Markov property imply
		that the transition from $X_{k-1}$ to $X_{k}$ remains $P$ in this enlarged
		filtration for every $k>s$.
		For the bridge construction, this follows from
		\cref{lem:exchangeable-construction}.
		Using \cref{eq-qd-01,eq-qd-09},
		\begin{align*}
			\E\big\{H_{k}^2V_{k}(X_{k})\mid\mathcal F_{k-1}^{\mathrm{enl}}\big\}
			&=H_{k-1}^2P\big\{(1-\epsilon_{k}a)^2V_{k}\big\}(X_{k-1})
			\le\big(1-c\epsilon_{k}\big)
			H_{k-1}^2V_{k-1}(X_{k-1}).
		\end{align*}
		Choose a fixed $k_{*}$ so that \cref{eq-qd-06} holds for $k\ge k_{*}$
		and \cref{eq-qd-09} holds for $k>k_{*}$.
		Iteration, $1-x\le e^{-x}$, and \cref{eq-qd-06} prove
		\cref{eq-qd-02} when $s\ge k_{*}$.
		For $s<k\le k_{*}$, the same conditional transition identity gives
		\[
		\E H_{k}^2\le M_{k}\E H_{k-1}^2,\qquad
		M_{k}:=\sup_{x\in S}P\{(1-\epsilon_{k}a)^2\}(x)
		\le 2+2\epsilon_{k}^2\sup_{x\in S}P(a^2)(x)<\infty.
		\]
		Enlarging $C$ by the fixed factor
		$e^{c\sum_{k=1}^{k_{*}}\epsilon_{k}}
		\prod_{k=1}^{k_{*}}\max\{1,M_{k}\}$ covers the early transitions.
		This proves \cref{eq-qd-02} uniformly in $1\le s<t$.
		
		For the left term of \cref{eq-qd-04}, the summation is split at the point $2s$. If $s\le t\le2s$, monotonicity of the stepsizes gives
		\begin{equation*}
			\sum_{k=s+1}^t\epsilon_{k}
			\ge c(t-s)\epsilon_{s}.
		\end{equation*}
		The corresponding geometric sum is bounded by $C\epsilon_{s}^{-1}$. If $t>2s$, an integral comparison yields
		\begin{equation*}
			\sum_{k=s+1}^t\epsilon_{k}
			\ge c\big\{t^{1-\alpha}-s^{1-\alpha}\big\}.
		\end{equation*}
		The tail is therefore bounded by
		\begin{equation*}
			C\int_{2s}^{\infty}
			\exp\big[-c\big\{x^{1-\alpha}-s^{1-\alpha}\big\}\big]dx
			\le Cs^\alpha
			=C\epsilon_{s}^{-1}.
		\end{equation*}
		
		For \cref{eq-qd-04}, for all sufficiently large $t$, $c\epsilon_{t}\le1$ and
		$1-e^{-c\epsilon_{t}}\ge(c/2)\epsilon_{t}$.
		\begin{align*}
			\epsilon_{t}\exp\bigg(-c\sum_{k=s+1}^{t-1}\epsilon_{k}\bigg)
			&\le\frac{2}{c}\exp\bigg(-c\sum_{k=s+1}^{t-1}\epsilon_{k}\bigg)
			\big(1-e^{-c\epsilon_{t}}\big).
		\end{align*}
		The finite initial part is absorbed into $C$, proving \cref{eq-qd-04}.
	\end{proof}
	
	Next, we estimate the recursive perturbation generated by the local substitution at the right end of the block, in order to provide an initial moment bound for applying the above propagation estimation.
    \begin{lemma}\label[lemma]{lem:block-boundary-perturbation}
		For a fixed index $i$, let $\ell_{i}=\min\mathcal B_{i}$,
		$r_{i}=\max\mathcal B_{i}$, and
		$H_{j}^{(i)}=\Delta_{j}-\Delta_{j}^{(i)}$. Choose
		$K_{m}=\lceil C_{1}m^{1/\alpha}\rceil+2m$, where $C_{1}$ is large enough that, whenever $i>K_{m}$, the stepsizes on $\mathcal B_{i}$ are comparable with
		$\epsilon_{i}$ and $m\epsilon_{i}\le1$. Then $H_{j}^{(i)}=0$ for $j<\ell_{i}$, and,
		if $i>K_{m}$,
		\begin{equation}
			\norm{H_{r_{i}}^{(i)}}_{4}
			\le C\sqrt{m}\,\epsilon_{i}.
			\label{eq-qg-01}
		\end{equation}
		If, in addition, $r_{i}<n$ and $s_{i}=r_{i}+1$, then
		\begin{equation}
			\E\abs{H_{s_{i}}^{(i)}}^2
			\le Cm\epsilon_{i}^2.
			\label{eq-qg-02}
		\end{equation}
	\end{lemma}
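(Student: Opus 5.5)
Both recursions $\Delta$ and $\Delta^{(i)}$ start from the same $\theta_{0}$ and are driven by paths that agree outside $\mathcal B_{i}$. Hence $H^{(i)}_{j}=0$ for $j<\ell_{i}$: the recursion \cref{eq-in-01} up to time $j$ uses only $X_{1},\ldots,X_{j}$, and these coincide with $X^{(i)}_{1},\ldots,X^{(i)}_{j}$ when $j<\ell_{i}$. (When $\ell_{i}=1$, $X_0$ is also resampled, but $\Delta_0$ is deterministic and $X_0$ never enters the recursion, so nothing changes.)

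For $\ell_{i}\le j\le r_{i}$ I will write the difference recursion explicitly. Subtracting the two updates gives
\[
H^{(i)}_{j}=(1-\epsilon_{j}a(X_{j}))H^{(i)}_{j-1}-\epsilon_{j}\bigl\{a(X_{j})-a(X^{(i)}_{j})\bigr\}\Delta^{(i)}_{j-1}-\epsilon_{j}\bigl\{\delta(X_{j})-\delta(X^{(i)}_{j})\bigr\},
\]
with $H^{(i)}_{\ell_{i}-1}=0$. Unrolling over the block of length $4m+1$ yields
\[
H^{(i)}_{r_{i}}=-\sum_{j\in\mathcal B_{i}}\epsilon_{j}\Pi_{j}\bigl[\{h(X_{j})-h(X^{(i)}_{j})\}\Delta^{(i)}_{j-1}+\delta(X_{j})-\delta(X^{(i)}_{j})\bigr],
\qquad
\Pi_{j}=\prod_{k=j+1}^{r_{i}}(1-\epsilon_{k}a(X_{k})).
\]
Since $i>K_{m}$, all $\epsilon_{k}$ on $\mathcal B_{i}$ are comparable to $\epsilon_{i}$ and $m\epsilon_{i}\le1$, so the products $\Pi_{j}$ are $1+O(\epsilon_{i}\sum_{k}\abs{a(X_{k})})$. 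They are not uniformly bounded because $a$ is unbounded, and they must be handled in $L^{p}$.

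The main obstacle is getting $\sqrt m$ rather than $m$ in \cref{eq-qg-01}. The crude triangle inequality gives $\norm{H_{r_{i}}}_{4}\le Cm\epsilon_{i}$, since each summand has $L^{4}$ norm of order $\epsilon_{i}$ using \eqref{eq-ma-06} and \cref{lem:delta-moments}. To do better I will expand $\Pi_{j}=1+(\Pi_{j}-1)$. The leading part, $\epsilon_{i}\sum_{j\in\mathcal B_{i}}\{\delta(X_{j})-\delta(X^{(i)}_{j})\}$, is a difference of two weighted additive functionals along bridge paths, and the steps are as follows.
\begin{enumerate}
\item Apply the Poisson decomposition \cref{eq-pg-01} with $g_\delta$, writing $\delta(X_j)=\{g_\delta(X_j)-Pg_\delta(X_{j-1})\}+\{Pg_\delta(X_{j-1})-Pg_\delta(X_j)\}$. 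This turns the sum into martingale differences plus a telescoping boundary term bounded by $2\norm{v_\delta}_\infty$.
\item Use Burkholder's inequality in $L^{4}$ for the martingale part, with the conditional fourth moments from \eqref{eq-pg-03}. Because the bridge copy has the same law as the original path (\cref{lem:exchangeable-construction}), the same bound applies to $X^{(i)}$ and gives $O(\sqrt m)$.
\end{enumerate}
If the martingale structure along the bridge is awkward, a fallback is a fourth-moment version of \cref{eq-pe-02}, using the uniform bridge densities of \cref{lem:bridge-kernel}\ref{item:bridge-uniform-bound} and mixing within the block. That also gives $\norm{\sum_{j\in\mathcal B_i}\delta(X_j)}_4\le C\sqrt m$.

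The terms carrying $\Delta^{(i)}_{j-1}$ contribute at most $\epsilon_{i}\cdot m\cdot\norm{\Delta}_{8}$-type quantities. I will avoid eighth moments by applying Hölder with the fourth-moment bound $\norm{\Delta_{j}}_{4}\le C\epsilon_{j}^{1/2}$ from \cref{lem:delta-moments}, together with the Poisson decomposition \cref{eq-qa-01a} for $h$, so that this sum is also of martingale type. With $\Delta$ frozen to be adapted, this gives $\epsilon_{i}\sqrt m\,\epsilon_{i}^{1/2}$, which is smaller than $\sqrt m\,\epsilon_{i}$.

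The correction terms from $\Pi_{j}-1$ are of order $\epsilon_{i}\cdot m\epsilon_{i}\cdot(\text{block sum})$. Since $m\epsilon_{i}\le1$ and $\sqrt m\,\epsilon_i\cdot\sqrt{m\epsilon_i}\le\sqrt m\,\epsilon_i$, these are absorbed after a summation-by-parts rearrangement that moves the products onto the partial sums $S_{j}=\sum_{k\le j}\{\delta(X_{k})-\delta(X^{(i)}_{k})\}$. Each partial sum has $\norm{S_j}_4\le C\sqrt m$ by the same argument as above.

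Finally, \cref{eq-qg-02} follows from one more step of the recursion:
\[
H_{s_{i}}^{(i)}=(1-\epsilon_{s_{i}}a(X_{s_{i}}))H^{(i)}_{r_{i}},
\]
because $X_{s_i}=X^{(i)}_{s_i}$. Conditioning on $\mathcal F_{r_{i}}\vee\sigma(I_{i})$ and using $\sup_{x}P a^{2}(x)<\infty$ from \eqref{eq-ma-06} (the Markov transition is preserved, cf.\ \cref{lem:exchangeable-construction}), we get $\E\abs{H^{(i)}_{s_{i}}}^{2}\le C\E\abs{H^{(i)}_{r_{i}}}^{2}\le C\norm{H^{(i)}_{r_{i}}}_{4}^{2}\le Cm\epsilon_{i}^{2}$.
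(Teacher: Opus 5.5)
Your treatment of the $\delta$-part is what the paper does: Poisson decomposition, Burkholder in $L^4$, then the finite-path identity for the copy. The last step, however, is wrongly justified. You condition on $\mathcal F_{r_i}\vee\sigma(I_i)$ and use the $P$-transition into $X_{s_i}$. But $H^{(i)}_{r_i}$ is not measurable with respect to that $\sigma$-field. For $r_i<n$ the copy $X^{(i)}_{\mathcal B_i}$ is drawn from the two-sided bridge law, so it, and hence $\Delta^{(i)}_{r_i}$, is a function of $X_{\ell_i-1}$, $I_i$ and the right boundary $X_{r_i+1}=X_{s_i}$. Part (ii) of \cref{lem:exchangeable-construction} gives the transition $P$ only for $j\ge s_i+1$, not for the step into $X_{s_i}$ itself. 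So $a(X_{s_i})$ and $H^{(i)}_{r_i}$ are dependent, and $\E\abs{H^{(i)}_{s_i}}^2\le C\E\abs{H^{(i)}_{r_i}}^2$ does not follow from your argument. The correct step is H\"older, $\norm{H^{(i)}_{s_i}}_2\le\norm{1-\epsilon_{s_i}a(X_{s_i})}_4\,\norm{H^{(i)}_{r_i}}_4$, with the first factor bounded via \eqref{eq-ma-06}. This is exactly why \eqref{eq-qg-01} has to be proved in $L^4$ rather than $L^2$.

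The same adaptedness slip affects your $\Delta^{(i)}$-terms. The phrase ``with $\Delta$ frozen to be adapted'' fails, because $\Delta^{(i)}_{j-1}$ depends on $X_{r_i+1}$. It is not predictable in the original filtration, and enlarging that filtration by $X_{r_i+1}$ destroys the $P$-transition, so $\xi_j\Delta^{(i)}_{j-1}$ is not a martingale difference. More generally, unrolling the difference recursion with the multiplier $1-\epsilon_ja(X_j)$ produces cross-path products: $h(X_j)\Delta^{(i)}_{j-1}$, $\Pi_j\delta(X^{(i)}_j)$, and, after your summation by parts, $\Pi_{j+1}a(X_{j+1})S_j$ with $S_j$ depending on $X_{r_i+1}$. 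With only fourth moments, their $L^4$ norms need a bridge-density domination argument via \cref{lem:bridge-kernel}, which you assert but do not supply.

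None of this is needed. Summing the increments of the two recursions from $\ell_i$ to $r_i$ gives the exact identity
\[
H^{(i)}_{r_i}=-\sum_{j=\ell_i}^{r_i}\epsilon_j\bigl\{\delta(X_j)-\delta(X^{(i)}_j)\bigr\}-\sum_{j=\ell_i}^{r_i}\epsilon_j\bigl\{a(X_j)\Delta_{j-1}-a(X^{(i)}_j)\Delta^{(i)}_{j-1}\bigr\}.
\]
It has no products, and each term of the second sum is built from one path only. Conditioning on $\mathcal F_{j-1}$, together with \eqref{eq-ma-06} and \cref{lem:delta-moments}, gives $\norm{a(X_j)\Delta_{j-1}}_4\le C\epsilon_i^{1/2}$. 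The copy satisfies the same bound by the finite-path identity. Minkowski with $m\epsilon_i\le1$ then bounds the second sum by $Cm\epsilon_i^{3/2}\le C\sqrt m\,\epsilon_i$. So the $\sqrt m$ gain is only needed for the $\delta$-sum, which you already handle correctly.
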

	
	\begin{proof}
		The two recursions agree before $\ell_{i}$. Summing the difference of their increments from $\ell_{i}$ through $r_{i}$ gives the exact identity
		\begin{align}
			H_{r_{i}}^{(i)}
			={}&-\sum_{j=\ell_{i}}^{r_{i}}\epsilon_{j}
			\big\{\delta(X_{j})-\delta(X_{j}^{(i)})\big\}
			-\sum_{j=\ell_{i}}^{r_{i}}\epsilon_{j}
			\big\{a(X_{j})\Delta_{j-1}
			-a(X_{j}^{(i)})\Delta_{j-1}^{(i)}\big\}.
			\label{eq-qg-03}
		\end{align}
		
		By \cref{eq-pg-01},
		$\delta(X_j)=g_\delta(X_j)-v_\delta(X_{j-1})
		+v_\delta(X_{j-1})-v_\delta(X_j)$.
		The first two terms form a martingale difference. The Burkholder
		inequality and \cref{eq-pg-03} give
		\[
		\Big\|\sum_{j=\ell_i}^{r_i}\epsilon_j
		\big\{g_\delta(X_j)-v_\delta(X_{j-1})\big\}\Big\|_4
		\le C\bigg(\sum_{j=\ell_i}^{r_i}\epsilon_j^2\bigg)^{1/2}.
		\]
		Summation by parts and boundedness of $v_\delta$ bound the remaining
		terms by $C\big(\epsilon_{\ell_i}+\epsilon_{r_i}
		+\sum_{j=\ell_i}^{r_i-1}\abs{\epsilon_{j+1}-\epsilon_j}\big)$.
		Since the stepsizes decrease and are comparable with $\epsilon_i$ on
		$\mathcal B_i$, these estimates give
		\begin{equation}
			\Big\|\sum_{j=\ell_{i}}^{r_{i}}\epsilon_{j}\delta(X_{j})\Big\|_{4}
			\le C\sqrt{m}\,\epsilon_{i}.
			\label{eq-qg-05}
		\end{equation}
		The finite-path distribution identity in
		\cref{lem:exchangeable-construction} gives the same
		bound with $X_{j}^{(i)}$.
		
		For the second line of \cref{eq-qg-03}, conditioning on $\mathcal F_{j-1}$, using the uniform conditional fourth moment of $a$, and applying \cref{lem:delta-moments} give
		\begin{equation*}
			\big\|a(X_{j})\Delta_{j-1}\big\|_{4}
			\le C\epsilon_{j-1}^{1/2}
			\le C\epsilon_{i}^{1/2}.
		\end{equation*}
		The same estimate holds for the updated recursion. Minkowski's inequality now yields
		\begin{equation*}
			\Big\|\sum_{j=\ell_{i}}^{r_{i}}\epsilon_{j}
				a(X_{j})\Delta_{j-1}\Big\|_{4}
			\le Cm\epsilon_{i}^{3/2}.
		\end{equation*}
		Because $m\epsilon_{i}\le1$,
		$m\epsilon_{i}^{3/2}=\sqrt{m}\,\epsilon_{i}\sqrt{m\epsilon_{i}}
		\le\sqrt{m}\,\epsilon_{i}$.
		Combining \cref{eq-qg-03,eq-qg-05} with this inequality proves \cref{eq-qg-01}.
		
		If $s_{i}=r_{i}+1$, both paths use the same state $X_{s_{i}}$. Their difference therefore evolves for this one step as
		\begin{equation}
			H_{s_{i}}^{(i)}
			=\big(1-\epsilon_{s_{i}}a(X_{s_{i}})\big)H_{r_{i}}^{(i)}.
			\label{eq-qg-07}
		\end{equation}
		The two factors in \cref{eq-qg-07} need not be independent. H\"older's inequality, the fourth moment of $a(X_{s_{i}})$, and \cref{eq-qg-01} give
		\begin{equation*}
			\norm{H_{s_{i}}^{(i)}}_{2}
			\le\norm{1-\epsilon_{s_{i}}a(X_{s_{i}})}_{4}
			\norm{H_{r_{i}}^{(i)}}_{4}
			\le C\sqrt{m}\,\epsilon_{i}.
		\end{equation*}
		Squaring this bound proves \cref{eq-qg-02}.
	\end{proof}

    By using Poisson decomposition and recursive moment bounds, the following estimates the bias-corrected weighted sum to control $\E|U_c|$.
	\begin{lemma}\label[lemma]{lem:weighted-adaptive-sum}
		Under \cref{ass:density-mixing,ass:fourth-moments}, we follow that
		\begin{equation}
		\begin{aligned}
		\E\abs{
		\frac{q_{0}\Delta_{0}}{\epsilon_{0}}
		-\sum_{i=1}^{n}q_{i}h(X_{i})\Delta_{i-1}
		-\kappa A_{n}-\mu_{\nu,n}}
		\le Cn^{(1-\alpha)/2}.
		\end{aligned}
		\end{equation}
	\end{lemma}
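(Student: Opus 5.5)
We plan to use the Poisson decomposition \cref{eq-qa-01a}, $h(X_i)=\xi_i+v_h(X_{i-1})-v_h(X_i)$, and split
\[
\sum_{i=1}^n q_i h(X_i)\Delta_{i-1}
=\sum_{i=1}^n q_i\xi_i\Delta_{i-1}
+\sum_{i=1}^n q_i\{v_h(X_{i-1})-v_h(X_i)\}\Delta_{i-1}.
\]
The terms $q_0\Delta_0/\epsilon_0$ and $\mu_{\nu,n}$ are $O(1)$ by \cref{lem:PR-weights,lem:oracle-variance}, so they can be absorbed.

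\textbf{Martingale part.} Since $\Delta_{i-1}$ is $\mathcal F_{i-1}$-measurable and $\E(\xi_i\mid\mathcal F_{i-1})=0$, orthogonality of the increments gives
\[
\E\Big(\sum_i q_i\xi_i\Delta_{i-1}\Big)^2=\sum_i q_i^2\E\{\Delta_{i-1}^2\E(\xi_i^2\mid\mathcal F_{i-1})\}.
\]
The conditional second moment of $\xi_i$ is uniformly bounded by \cref{eq-pg-03}, and $\E\Delta_{i-1}^2\le C\epsilon_{i-1}$ by \cref{lem:delta-moments}. Hence this sum is at most $C\sum_i\epsilon_i\le Cn^{1-\alpha}$, and its $L^1$ norm is at most $Cn^{(1-\alpha)/2}$.

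\textbf{Telescoping part.} Apply summation by parts:
\[
\sum_i q_i\{v_h(X_{i-1})-v_h(X_i)\}\Delta_{i-1}
=q_1v_h(X_0)\Delta_0-q_nv_h(X_n)\Delta_{n-1}
+\sum_{i=1}^{n-1}v_h(X_i)\{q_{i+1}\Delta_i-q_i\Delta_{i-1}\}.
\]
Write $q_{i+1}\Delta_i-q_i\Delta_{i-1}=(q_{i+1}-q_i)\Delta_i+q_i(\Delta_i-\Delta_{i-1})$. The first piece is bounded in $L^1$ by $C\sum_i\epsilon_i\cdot\epsilon_i^{1/2}$, using \cref{eq-ph-01}, boundedness of $v_h$, and \cref{lem:delta-moments}. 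For the second piece, substitute the recursion
\[
\Delta_i-\Delta_{i-1}=-\epsilon_i\{\bar a\Delta_{i-1}+\delta(X_i)+h(X_i)\Delta_{i-1}\}.
\]
\begin{itemize}
\item The terms containing $\Delta_{i-1}$ contribute $C\sum_i\epsilon_i^{3/2}$ in $L^1$, again by \cref{lem:delta-moments} and \cref{eq-ma-06}. This produces the $C\sum\epsilon_i^{3/2}$ term already displayed at the start of the appendix.
\item What remains is $-\sum_{i=1}^{n-1}q_i\epsilon_i\,\delta(X_i)v_h(X_i)$ (up to sign conventions). Subtracting $\kappa A_n=\sum q_i\epsilon_i\kappa$ leaves $\sum q_i\epsilon_i\{\delta v_h(X_i)-\kappa\}$.
\end{itemize}
Since $\int\delta v_h\,d\pi=\kappa$ by \cref{eq-mb-06}, the function $f=\delta v_h-\kappa$ is centered and lies in $L^2(\pi)$ ($v_h$ is bounded). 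Then \cref{eq-pe-02} with $c_i=q_i\epsilon_i$ gives
\[
\E\Big(\sum_i q_i\epsilon_i f(X_i)\Big)^2\le C\sum\epsilon_i^2+C\le Cn^{1-2\alpha}+C\log n .
\]
This is negligible.

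\textbf{Collecting the bounds.} Since $\alpha>1/2$, we have $\sum\epsilon_i^{3/2}\le Cn^{1-3\alpha/2}+C$, which is at most $Cn^{(1-\alpha)/2}$ because $1-3\alpha/2\le(1-\alpha)/2$. The boundary terms are $O(1)$. All pieces are therefore $O(n^{(1-\alpha)/2})$.

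The main obstacle is the bookkeeping of signs in the summation by parts. We need to confirm that the $\delta v_h$ term has exactly the sign and weight $q_i\epsilon_i$ that cancels $\kappa A_n$, including the index range $i\le n-1$ matching the definition of $A_n$. Any mismatch would leave an uncancelled $O(n^{1-\alpha})$ bias. We would also check that the $i=n$ boundary term and the $\ell=1$ edge effects are only $O(1)$.
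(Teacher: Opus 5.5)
Your proposal is correct and follows essentially the same route as the paper. Both use the Poisson decomposition \cref{eq-qa-01a} and summation by parts, then substitute the recursion so that the $q_i\epsilon_i\delta(X_i)v_h(X_i)$ terms are matched against $\kappa A_n$, and finish with the martingale bound, the $C\sum_i\epsilon_i^{3/2}$ bound, and \cref{eq-pe-02} for the centered remainder. The sign check you flag works out: $-\sum_{i=1}^n q_ih(X_i)\Delta_{i-1}$ contains exactly $+\sum_{i=1}^{n-1}q_i\epsilon_i\delta(X_i)v_h(X_i)$, which combines with $-\kappa A_n$ into the centered sum. That centered sum is in fact $O(1)$ in $L^2$, because $\sum_i\epsilon_i^2<\infty$ for $\alpha>1/2$.
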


	\begin{proof}
		From \cref{eq-qa-01a}, summation by parts gives
		\begin{align*}
		\sum_{i=1}^{n}q_{i}h(X_{i})\Delta_{i-1}
		={}&\sum_{i=1}^{n}q_{i}\Delta_{i-1}\xi_{i}
		+q_{1}\Delta_{0}v_{h}(X_{0})
		-q_{n}\Delta_{n-1}v_{h}(X_{n})\\
		&+\sum_{i=1}^{n-1}(q_{i+1}-q_{i})\Delta_{i}v_{h}(X_{i})
		-\sum_{i=1}^{n-1}q_{i}\epsilon_{i}
		\big\{a(X_{i})\Delta_{i-1}+\delta(X_{i})\big\}v_{h}(X_{i}).
		\end{align*}
		After subtracting $\kappa A_n$, the last term is
		\[
		\sum_{i=1}^{n-1}q_i\epsilon_i
		\big\{\delta(X_i)v_h(X_i)-\kappa\big\}.
		\]
		Conditional orthogonality, \cref{eq-pg-03,lem:delta-moments}, and
		\cref{eq-ph-01} imply
		\[
		\E\abs{\sum_{i=1}^{n}q_i\Delta_{i-1}\xi_i}
		\le C\bigg(1+\sum_{i=2}^{n}\epsilon_{i-1}\bigg)^{1/2}
		\le Cn^{(1-\alpha)/2}.
		\]
		The boundary terms are bounded in $L^1$ by $C$. Furthermore,
		\cref{eq-ph-01,lem:delta-moments,eq-ma-06} give
		\[
		\sum_{i=1}^{n-1}
		\bigg\{\abs{q_{i+1}-q_i}\E\abs{\Delta_i}
		+\abs{q_i}\epsilon_i\E\abs{a(X_i)\Delta_{i-1}}\bigg\}
		\le C\sum_{i=1}^{n}\epsilon_i^{3/2}
		\le Cn^{(1-\alpha)/2}.
		\]
		Finally, $\delta v_h-\kappa$ is centered under $\pi$ by \cref{eq-mb-06},
		and \cref{eq-pe-02,eq-ph-01} yield
		\[
		\E\bigg[
		\sum_{i=1}^{n-1}q_i\epsilon_i
		\big\{\delta(X_i)v_h(X_i)-\kappa\big\}
		\bigg]^2\le C.
		\]
		The remaining deterministic terms are bounded by
		\cref{eq-ph-01} and $\abs{\mu_{\nu,n}}\le C$ from
		\cref{lem:oracle-variance}. Combining these estimates proves the claim.
	\end{proof}

    Finally, by combining the estimation of the block boundary perturbation and the subsequent propagation, the weighted sum change caused by the local path replacement is controlled, and an upper bound for $\E|U_c-U_c'|^2$ is established.
    \begin{lemma}\label[lemma]{lem:bridge-replacement-sum}
		Let $K_m$ be as in \cref{lem:block-boundary-perturbation}. For every
		$1\le i\le n$,
		\begin{equation}
		\E\bigg[
		\sum_{j=1}^{n}q_j
		\big\{-h(X_j)\Delta_{j-1}
		+h(X_j^{(i)})\Delta_{j-1}^{(i)}\big\}
		\bigg]^2
		\le
		\begin{cases}
		Cm^2, & i\le K_m,\\
		C\big\{m\epsilon_i+m^2\epsilon_i^2\big\}, & i>K_m.
		\end{cases}
		\end{equation}
	\end{lemma}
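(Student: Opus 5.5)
We split the time axis into three regions relative to the block $\mathcal B_i=[\ell_i,r_i]$ and write the quantity inside the square as $S^{(i)}=S_{\mathrm{pre}}+S_{\mathrm{blk}}+S_{\mathrm{post}}$. Here $S_{\mathrm{pre}}$ collects $j<\ell_i$, $S_{\mathrm{blk}}$ collects $\ell_i\le j\le r_i$, and $S_{\mathrm{post}}$ collects $j>r_i$. Since $X_j^{(i)}=X_j$ for $j<\ell_i$ and, by \cref{lem:block-boundary-perturbation}, $\Delta_{j-1}^{(i)}=\Delta_{j-1}$ there, we get $S_{\mathrm{pre}}=0$. It therefore suffices to bound $\|S_{\mathrm{blk}}\|_2$ and $\|S_{\mathrm{post}}\|_2$ separately.

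\emph{The block contribution.} Each summand is $q_j\{-h(X_j)\Delta_{j-1}+h(X_j^{(i)})\Delta_{j-1}^{(i)}\}$. By the conditional fourth moments \cref{eq-ma-06}, \cref{lem:delta-moments}, and the distributional identity of \cref{lem:exchangeable-construction}, each summand has $L^2$ norm at most $C\epsilon_{j-1}^{1/2}$. Minkowski over the $|\mathcal B_i|\le 4m+1$ indices then gives $\|S_{\mathrm{blk}}\|_2\le Cm\epsilon_{\ell_i-1}^{1/2}$. For $i\le K_m$ this is at most $Cm$, which is the first case. For $i>K_m$ the stepsizes on $\mathcal B_i$ are comparable with $\epsilon_i$, so the bound is $Cm\epsilon_i^{1/2}$, whose square $m^2\epsilon_i$ is too large. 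We sharpen it by rewriting $h(X_j)=\xi_j+v_h(X_{j-1})-v_h(X_j)$ via \cref{eq-qa-01a}, separately for both paths:
\begin{enumerate}
\item The martingale part $\sum q_j\Delta_{j-1}\xi_j$ has orthogonal increments. For the bridge path this uses the conditional structure in \cref{lem:exchangeable-construction}; if that is delicate, we apply the same estimate to each path separately via equality in law. This gives $L^2$ norm at most $C(m\epsilon_i)^{1/2}$.
\item The telescoping part is handled by summation by parts, using the boundedness of $v_h$, $|q_{j+1}-q_j|\le C\epsilon_j$, and $\|\Delta_j-\Delta_{j-1}\|_2\le C\epsilon_j$. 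It contributes $C(\epsilon_i^{1/2}+m\epsilon_i)$.
\end{enumerate}
Squaring yields $C\{m\epsilon_i+m^2\epsilon_i^2\}$. For $i\le K_m$ the cruder bound $Cm$ suffices, and we may also use $\Delta$ bounded in $L^2$.

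\emph{The tail after the block.} For $j>r_i$ both paths share the same states, so the summand equals $q_jh(X_j)H^{(i)}_{j-1}$ with $H^{(i)}=\Delta-\Delta^{(i)}$. This difference obeys the homogeneous recursion \cref{eq-qd-01} from $s_i=r_i+1$ onward, with the seed independence supplied by \cref{lem:exchangeable-construction}(ii). Again decompose $h(X_j)=\xi_j+v_h(X_{j-1})-v_h(X_j)$.
\begin{enumerate}
\item The martingale part has squared $L^2$ norm $\le C\sum_{j>s_i}\E (H_{j-1}^{(i)})^2$. By \cref{eq-qd-02}, \cref{eq-qd-04} and \cref{eq-qg-02}, this is $\le C\epsilon_{s_i}^{-1}\E|H_{s_i}^{(i)}|^2\le Cm\epsilon_i$.
\item Summation by parts on the $v_h$-part produces a boundary term $O(\|H^{(i)}_{s_i}\|_2)$, plus terms involving $(q_{j+1}-q_j)H_j$ and $H_j-H_{j-1}=-\epsilon_ja(X_j)H_{j-1}$. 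These are controlled in $L^2$ by $C\sum_j\epsilon_j\|H_{j-1}\|_{2}$, where the $a(X_j)$ factor is handled by conditioning on $\mathcal F_{j-1}$ and using \cref{eq-ma-06}. Combined with \cref{eq-qd-02} and the second sum in \cref{eq-qd-04}, this gives $C\|H_{s_i}^{(i)}\|_2\le C(m)^{1/2}\epsilon_i$.
\item The step $j=s_i$ itself is bounded directly by $\|h(X_{s_i})H_{r_i}\|_2\le C\|H_{r_i}\|_4\le C\sqrt m\epsilon_i$, using \cref{eq-qg-01}.
\end{enumerate}
For $i\le K_m$ we instead use the crude bound $\|H_{r_i}\|_4\le Cm$, obtained from \cref{eq-qg-03} with the moment bounds. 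This gives a tail contribution of $Cm^2\epsilon_{s_i}^{-1}\cdot\epsilon$-weights. That is still $O(m^2)$ only if we use the decay more carefully, so for this case we accept any bound of order $m^2$, with the constant allowed to absorb $\epsilon_{K_m}^{-1}$ factors via $\E H^2_{s}\lesssim m\epsilon$.

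\emph{Main obstacle.} The hardest step is the post-block tail for $i\le K_m$, and more generally justifying the martingale orthogonality for $\xi_j$ in the filtration enlarged by the resampling seed $I_i$. For the latter we rely on \cref{lem:exchangeable-construction}(ii). For the former we plan to bound $\E|H^{(i)}_{s_i}|^2\le Cm\epsilon_{\ell_i}$ using only \cref{lem:delta-moments}, and then use $\epsilon_{s_i}^{-1}\E H_{s_i}^2\le Cm\,\epsilon_{\ell_i}/\epsilon_{s_i}\le Cm^2$, which holds since the ratio $\epsilon_{\ell_i}/\epsilon_{s_i}$ is at most polynomial in $m$ for $i\le K_m$.
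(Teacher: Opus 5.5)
Your proposal is correct and follows essentially the paper's proof: the Poisson decomposition $h(X_j)=\xi_j+v_h(X_{j-1})-v_h(X_j)$ with summation by parts on the block (each path bounded separately via the finite-path identity of \cref{lem:exchangeable-construction}), and \cref{lem:homogeneous-propagation} with the seed-enlarged filtration for the tail. The only cosmetic difference is that you handle the step $j=s_i$ by H\"older and \cref{eq-qg-01}, whereas the paper folds it into the local part.

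For $i\le K_m$, drop the remark about absorbing $\epsilon_{K_m}^{-1}$ into the constant, since that factor grows like $m$. Your final ratio argument does work. The paper's simpler bound $\E\abs{H_{s_i}^{(i)}}^2\le 2\E\Delta_{s_i}^2+2\E(\Delta_{s_i}^{(i)})^2\le C\epsilon_{s_i}$ even makes the tail $O(1)$.
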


	\begin{proof}
		Fix $i$. For $i>K_m$, split the sum into its local part $L_i$,
		containing the indices $\ell_i,\ldots,r_i+1$ when $r_i<n$
		and $\ell_i,\ldots,n$ otherwise, and its remaining part $L_i'$.
		By \cref{eq-qa-01a}, summation by parts over the local range gives,
		when $r_i<n$,
		\begin{align*}
		\sum_{j=\ell_i}^{r_i+1}q_jh(X_j)\Delta_{j-1}
		={}&\sum_{j=\ell_i}^{r_i+1}q_j\Delta_{j-1}\xi_j
		+q_{\ell_i}\Delta_{\ell_i-1}v_h(X_{\ell_i-1})
		-q_{r_i+1}\Delta_{r_i}v_h(X_{r_i+1})\\
		&+\sum_{j=\ell_i}^{r_i}(q_{j+1}-q_j)\Delta_jv_h(X_j)
		-\sum_{j=\ell_i}^{r_i}q_j\epsilon_j
		\big\{a(X_j)\Delta_{j-1}+\delta(X_j)\big\}v_h(X_j).
		\end{align*}
		The same identity with upper endpoint $n$ applies when $r_i=n$.
		Using \cref{eq-ph-01,eq-pg-03,lem:delta-moments,eq-pe-02} and
		$m\epsilon_i\le1$ gives
		\[
		\E L_i^2\le C\big(m\epsilon_i+m^2\epsilon_i^2\big).
		\]
		The finite-path distribution identity in
		\cref{lem:exchangeable-construction} gives the same estimate for the
		updated path and hence for $L_i$.

		If $r_i\le n-2$, then for $j\ge r_i+2$ the paths use the same states and
		\[
		H_j^{(i)}=\big(1-\epsilon_j a(X_j)\big)H_{j-1}^{(i)}.
		\]
		By \cref{lem:homogeneous-propagation,eq-qg-02} and the comparability
		of the stepsizes on $\mathcal B_i$,
		\[
		\sum_{j=r_i+2}^{n}\E\abs{H_{j-1}^{(i)}}^2
		\le C\epsilon_{r_i+1}^{-1}\E\abs{H_{r_i+1}^{(i)}}^2
		\le Cm\epsilon_i.
		\]
		Here
		\[
		L_i'=-\sum_{j=r_i+2}^{n}q_jh(X_j)H_{j-1}^{(i)}.
		\]
		The transition identity in \cref{lem:exchangeable-construction} keeps
		\cref{eq-qa-01a} valid after conditioning on the bridge seed.
		Summation by parts gives
		\begin{align*}
		-L_i'={}&\sum_{j=r_i+2}^{n}q_jH_{j-1}^{(i)}\xi_j
		+q_{r_i+2}H_{r_i+1}^{(i)}v_h(X_{r_i+1})
		-q_nH_{n-1}^{(i)}v_h(X_n)\\
		&+\sum_{j=r_i+2}^{n-1}
		\big\{(q_{j+1}-q_j)H_j^{(i)}
		-q_j\epsilon_ja(X_j)H_{j-1}^{(i)}\big\}v_h(X_j).
		\end{align*}
		Together with \cref{eq-ph-01}, and
		\cref{lem:homogeneous-propagation,eq-qd-04} therefore give
		\[
		\E\abs{L_i'}^2
		\le C\sum_{j=r_i+2}^{n}\E\abs{H_{j-1}^{(i)}}^2
		+C\E\abs{H_{r_i+1}^{(i)}}^2
		\le Cm\epsilon_i.
		\]
		If $r_i\ge n-1$, then $L_i'=0$ and the same estimate holds.
		Combining the local and tail bounds proves the second line.

		For $i\le K_m$, each local summand has uniformly bounded second moment:
		\[
		\E\big[ h(X_j)^2\Delta_{j-1}^2\big]
		=\E\big[\Delta_{j-1}^2P(h^2)(X_{j-1})\big]\le C,
		\]
		and the finite-path identity gives the same bound for the updated path.
		There are $O(m)$ local indices, so $\E L_i^2\le Cm^2$. If a tail is present,
		\[
		\E\abs{H_{r_i+1}^{(i)}}^2
		\le2\E\Delta_{r_i+1}^2+
		2\E\big(\Delta_{r_i+1}^{(i)}\big)^2
		\le C\epsilon_{r_i+1}.
		\]
		The propagation and summation-by-parts estimates above give
		$\E\abs{L_i'}^2\le C$. Thus the first line follows.
	\end{proof}
	
	Based on the aforementioned auxiliary lemma, the proof of \cref{lem:corrected-remainder} is completed below.
	\begin{proof}[Proof of \cref{lem:corrected-remainder}]
		From \cref{eq-pa-02,lem:weighted-adaptive-sum} and using the positive lower bound of $\sigma_{\nu,n}$ guaranteed by \cref{lem:oracle-variance}, we have
		\[
		\E|U_c|
		\le
		\frac{Cn^{(1-\alpha)/2}}{\sigma_{\nu,n}\sqrt n}
		\le Cn^{-\alpha/2},
		\]
		Thus, we obtain \cref{eq-qc-02}.
		
		For the second estimate, from \cref{eq-pa-02}, the deterministic terms in $U_c - U_c'$ cancel out. Taking the conditional expectation of the uniform index $I$ and applying
		\cref{lem:bridge-replacement-sum}, we obtain
		\[
		\E|U_c-U_c'|^2
		\le \frac{C}{n^2\sigma_{\nu,n}^2}
		\left\{m^{2+1/\alpha}+mn^{1-\alpha}\right\}
		=
		C\sigma_{\nu,n}^{-2}
		\left\{
		\frac{m^{2+1/\alpha}}{n^2}
		+\frac{m}{n^{1+\alpha}}
		\right\}.
		\]
		This proves \cref{eq-qh-03}.
	\end{proof}
	
\end{document}